\newtheorem{theorem}{Theorem}[section]
\newtheorem{lemma}[theorem]{Lemma}
\newtheorem{proposition}[theorem]{Proposition}
\newtheorem{corollary}[theorem]{Corollary}
\theoremstyle{definition}
\newtheorem{definition}[theorem]{Definition}
\newtheorem{example}[theorem]{Example}
\theoremstyle{remark}
\newtheorem{remark}[theorem]{Remark}
\theoremstyle{problem}
\newtheorem{conjecture}[theorem]{Conjecture}
\numberwithin{equation}{section}
\newcommand{\ch}{{\rm ch} }
\newcommand{\Coe}{ {\rm Coeff} }
\newcommand{\C}{ \mathbb C }
\newcommand{\End}{{\rm End}}
\newcommand{\fa}{ \mathfrak a}
\newcommand{\fG}{ \mathfrak G }
\newcommand{\fL}{ \mathfrak L }
\newcommand{\fn}{ \mathfrak n }
\newcommand{\fock}{{\mathbb H}_X}
\newcommand{\Hn}{H^*(\Xn)}
\newcommand{\la}{\lambda}
\newcommand{\lambsq}{s(\lambda)}
\newcommand{\Ln}{L^{[n]}}
\newcommand{\MD}{\mathcal {MD}}
\newcommand{\N}{\mathbb N}
\newcommand{\Q}{{\mathbb Q}}
\newcommand{\qMD}{{\rm q}{\mathcal {MD}}}
\newcommand{\qMZV}{ {\bf qMZV} }
\newcommand{\Tr}{ {\rm Tr} }
\newcommand{\vac}{|0\rangle}
\newcommand{\w}{\tilde}
\newcommand{\Wb}{ {\bf W} }
\newcommand{\Xn}{ {X^{[n]}}}
\newcommand{\Z}{ \mathbb Z }
\def\beq{\begin{equation}}
\def\eeq{\end{equation}}
\begin{document}

\title[Hilbert schemes of points and multiple $q$-zeta values]
      {Hilbert schemes of points on surfaces
      and multiple $q$-zeta values}

\author[Mazen M. Alhwaimel]{Mazen M. Alhwaimel}
\address{Department of Mathematics, College of Science, Qassim University, P. O. Box 6644, Buraydah 51452, Saudi Arabia} 
\email{alhwaimelm@gmail.com}

\author[Zhenbo Qin]{Zhenbo Qin}
\address{Department of Mathematics, University of Missouri, 
         Columbia, MO 65211, USA} 
\email{qinz@missouri.edu}

\date{\today}
\keywords{Hilbert schemes of points on surfaces; 
multiple $q$-zeta values; quasimodular forms; Heisenberg operators; 
generalized partitions.} 
\subjclass[2020]{Primary 14C05; Secondary 11M32}

\begin{abstract}
For a line bundle $L$ on a smooth projective surface $X$
and nonnegative integers $k_1, \ldots, k_N$, 
Okounkov \cite{Oko} introduced the reduced generating series 
$\big \langle \ch_{k_1}^{L} \cdots \ch_{k_N}^{L} \big \rangle'$ 
for the intersection numbers among the Chern characters of 
the tautological bundles over the Hilbert schemes of points on $X$ 
and the total Chern classes of the tangent bundles of 
these Hilbert schemes, and conjectured that they are 
multiple $q$-zeta values of weight at most $\sum_{i=1}^N (k_i + 2)$. 
The second-named author further conjectured in \cite{Qin2} that 
these reduced generating series are quasi-modular forms if 
the canonical divisor of $X$ is numerically trivial. 
In this paper, we verify these two conjectures for 
$\big \langle \ch_2^{L} \big \rangle'$. 
The main approaches are to apply the procedure laid out in \cite{QY} and 
to establish various identities for multiple $q$-zeta values 
and quasi-modular forms.
\end{abstract}
 
\maketitle

\section{\bf Introduction} 
\label{sect_intr}

Hilbert schemes 
have been studied extensively since the pioneering work of 
Grothendieck \cite{Grot}. It is well known \cite{Bri, Fog, Iar} 
that the Hilbert schemes of points, 
parametrizing $0$-dimensional closed subschemes,
on a smooth projective surface are smooth and irreducible. 
Let $X$ be a smooth projective complex surface, 
and let $\Xn$ be the Hilbert scheme of $n$ points in $X$. A line bundle 
$L$ on $X$ induces a tautological rank-$n$ bundle $\Ln$ on $\Xn$.
Let $\ch_k(\Ln)$ be the $k$-th Chern character of $\Ln$.
Following Okounkov \cite{Oko}, we introduce the two generating series:
\begin{eqnarray}     
\big \langle \ch_{k_1}^{L_1} \cdots \ch_{k_N}^{L_N} \big \rangle
  &=&\sum_{n \ge 0} q^n \, \int_\Xn \ch_{k_1}(L_1^{[n]}) \cdots \ch_{k_N}(L_N^{[n]}) 
        \cdot c(T_\Xn)   \label{OkoChkN.1}   \\
\big \langle \ch_{k_1}^{L_1} \cdots \ch_{k_N}^{L_N} \big \rangle'
  &=&\frac{\big \langle \ch_{k_1}^{L_1} \cdots \ch_{k_N}^{L_N} 
     \big \rangle}{\langle \rangle}
      = (q; q)_\infty^{\chi(X)} \cdot \big \langle 
      \ch_{k_1}^{L_1} \cdots \ch_{k_N}^{L_N} \big \rangle  \label{OkoChkN.2} 
\end{eqnarray}
where $0 < |q| < 1$, $c\big (T_\Xn \big )$ is the total Chern class of 
the tangent bundle $T_\Xn$, $\chi(X)$ is the Euler characteristics of $X$, 
$(a; q)_n = \prod_{i=0}^n (1-aq^i)$, and  
$$
  \langle \rangle 
= \sum_{n \ge 0} q^n \, \int_\Xn c(T_\Xn)
= \frac1{(q; q)_\infty^{\chi(X)}}
$$
which is a formula due to G\"ottsche \cite{Got}.
In \cite{Car1, Car2, Zhou}, 
for $X = \C^2$ with a suitable $\C^*$-action and $L = \mathcal O_X$,
the series $\big \langle \ch_{k_1}^L \cdots \ch_{k_N}^L \big \rangle$ 
in the equivariant setting has been studied.
In \cite{Oko, Qin2}, the following conjectures were proposed.

\begin{conjecture}   \label{OkoConj}
(\cite[Conjecture~2]{Oko}) Let $L$ be a line bundle on a smooth 
projective surface $X$. Then 
$\big \langle \ch_{k_1}^L \cdots \ch_{k_N}^L \big \rangle'$ is 
a multiple $q$-zeta value of weight at most 
$$
\sum_{i=1}^N (k_i + 2).
$$
\end{conjecture}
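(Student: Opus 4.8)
The full conjecture is open; the plan is to prove its first genuinely nontrivial instance, namely that $\langle \ch_2^L \rangle'$ is a multiple $q$-zeta value of weight at most $k_1 + 2 = 4$ (the case $N = 1$, $k_1 = 2$), and simultaneously the quasi-modularity statement of \cite{Qin2} when $\KX$ is numerically trivial. The first step is to move everything onto the Fock space $\fock = \bigoplus_{n \ge 0} \Hn$ carrying the Nakajima--Grojnowski Heisenberg operators $\fa_m(\g)$. By the work of Lehn and Li--Qin--Wang, cup product with $\ch_2(\Ln)$ is the restriction to $\Hn$ of a universal operator $\fG_2(L)$ on $\fock$ assembled from the creation operators $\fa_{-m}(\g)$, Lehn's boundary derivation $\fd$ and integration over $X$; likewise the total Chern classes $c(T_\Xn)$ assemble, via G\"ottsche's formula and its operator refinement, into an explicit operator on $\fock$. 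In this way $\langle \ch_2^L \rangle$ is rendered as a vacuum expectation value on $\fock$ of the composite of the Chern-class operator with $\fG_2(L)$, weighted by $q^n$ on the degree-$n$ summand $\Hn$.

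Second, I would run the procedure of \cite{QY} on this vacuum expectation. Commuting the $\fa_m$'s and $\fd$ past one another turns it into a finite sum over generalized partitions $\la$, each summand being $q^{|\la|}$ times a polynomial in the parts of $\la$ and in the four Chern numbers $\int_X c_1(L)^2$, $\int_X c_1(L)\KX$, $\int_X \KX^2$ and $\chi(X) = \int_X c_2(X)$. Passing to the reduced series $\langle \ch_2^L \rangle'$ as in \eqref{OkoChkN.2} --- that is, multiplying by $(q;q)_\infty^{\chi(X)}$ --- cancels the disconnected vacuum factor $(q;q)_\infty^{-\chi(X)}$, and by the standard connectedness structure of such one-point series one is left with an expression that is \emph{linear} in the Chern numbers:
\[
\langle \ch_2^L \rangle' \;=\; a(q)\!\int_X c_1(L)^2 \;+\; b(q)\!\int_X c_1(L)\KX \;+\; c(q)\!\int_X \KX^2 \;+\; d(q)\,\chi(X),
\]
with $a,b,c,d \in \Q[[q]]$ universal series that \cite{QY} produces explicitly as combinations of Eisenstein-type series $\sum_{n \ge 1} \tfrac{n^j q^n}{1-q^n}$, their $\Dq$-derivatives, and nested depth-two sums of the same shape.

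Third --- and this is the heart of the matter --- I would identify each of $a(q), b(q), c(q), d(q)$ with a multiple $q$-zeta value of weight at most $4$. This calls for a package of identities among multiple $q$-zeta values and quasi-modular forms: rewriting $G_2^2$, $G_4$ and $\Dq G_2$ as admissible double $q$-zeta values, and, crucially, showing that the contributions of apparent weight greater than $4$ coming from the cubic Heisenberg and boundary terms in $\fG_2(L)$ cancel once the reduction of Step~2 is carried out. I expect this weight collapse, together with pinning down the exact depth-two $q$-zeta value representing the $\KX$-dependent coefficients $b(q)$ and $c(q)$, to be the main obstacle; by comparison the translation of Step~1 and the combinatorial reduction of Step~2 are bookkeeping once the operator formula for $\ch_2$ is fixed.

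Finally, the quasi-modularity conjecture of \cite{Qin2} falls out of the same formula: if $\KX \equiv 0$ numerically then $\int_X c_1(L)\KX = \int_X \KX^2 = 0$, so $\langle \ch_2^L \rangle' = a(q)\int_X c_1(L)^2 + d(q)\,\chi(X)$, and it then suffices to check that the universal series $a(q)$ and $d(q)$ are quasi-modular of weight at most $4$ --- equivalently, in the identifications of Step~3, that they are polynomials in $G_2$ and $G_4$. The remaining term $b(q)\int_X c_1(L)\KX + c(q)\int_X \KX^2$ is then exactly the component of $\langle \ch_2^L \rangle'$ responsible for any failure of quasi-modularity.
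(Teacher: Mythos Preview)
Your plan is essentially the paper's: rewrite $\langle \ch_2^L\rangle'$ via \eqref{chLnGAlpha} as $(q;q)_\infty^{\chi(X)}\big(F_2^{1_X}(q)+F_1^{L}(q)+F_0^{L^2/2}(q)\big)$, express $F_2^{1_X}(q)$ as the \emph{trace} $\Tr\, q^\fn\, \Wb(\fL_1,z)\, \fG_2(1_X)$ on $\fock$ via the Carlsson--Okounkov vertex operator (Lemma~\ref{FtoW}; note this is a trace over all of $\fock$, not a vacuum matrix element), expand using an explicit formula for $\fG_2$ together with Lemma~\ref{Qin5.9}, and identify the four universal coefficients as elements of $\qMZV$ of weight~$\le 4$. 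Two refinements relative to your sketch: the paper uses Tang's closed formula~\eqref{Tang5.6} for $\fG_2(\alpha)$ rather than building it from Lehn's derivation $\fd$, and the raw sums coming from the length-$4$ piece of $\fG_2$ are genuinely depth three and four, not two (Lemmas~\ref{i+j=k+l} and~\ref{lemma_ijk100}). More substantively, the delicate cancellation in your Step~3 is not of weight-$>4$ contributions---none arise---but of the non-admissible pieces $q\tfrac{\rm d}{{\rm d}q}[1]$ and $\sum_{n>m>0}\tfrac{nq^n}{(1-q^n)^2(1-q^m)}$, which surface separately in Lemmas~\ref{ijkCO} and~\ref{lemma_ijk100} and drop out only in the combined $K_X^2$-coefficient (Lemma~\ref{KX2alpha}); and the length-$4$ contribution is pinned down by importing the a~priori quasi-modularity of $\Theta_2(q)$ from~\cite{SQ} (Lemma~\ref{i+j=k+l}), a key shortcut you did not anticipate.
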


\begin{conjecture}   \label{QinConj}
(\cite{Qin2}) Let $L$ be a line bundle on a smooth 
projective surface $X$. If the canonical divisor of $X$ 
is numerically trivial, then 
$\big \langle \ch_{k_1}^L \cdots \ch_{k_N}^L \big \rangle'$ is  
a quasimodular form of weight at most $\sum_{i=1}^N (k_i + 2)$.
\end{conjecture}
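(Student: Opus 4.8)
The plan is to lift the whole computation to the Fock space $\fock = \bigoplus_{n \ge 0} H^*(\Xn)$ carrying the action of the Nakajima Heisenberg operators $\fa_m(\g)$, and then to run the reduction procedure of \cite{QY}. First I would rewrite each tautological Chern character $\ch_{k_i}(\Ln)$ as the action on $\fock$ of a Chern-character operator $\fG_{k_i}(L)$, in the sense of Lehn and Li--Qin--Wang, and likewise express the total Chern class $c(T_\Xn)$ as an explicit operator assembled from the $\fa_m(\g)$. Under this dictionary the series \eqref{OkoChkN.1} becomes a correlation function on $\fock$: the integral $\int_\Xn(\cdots)$ is the pairing against the fundamental class, so weighting by $q^n$ and summing produces a single graded vacuum expectation value. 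The reduced series \eqref{OkoChkN.2} is then the corresponding \emph{connected} correlator, with the normalizing factor $(q;q)_\infty^{\chi(X)}$ exactly cancelling G\"ottsche's partition function $\langle \rangle$.

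Next I would normal-order this correlator by repeatedly applying the commutators between the $\fG_{k_i}(L)$ and the Heisenberg generators $\fa_m(\g)$. Iterating these relations collapses the correlator to a finite sum indexed by the generalized partitions of the title, each summand factoring as a product of elementary one-point $q$-series. Each such building block is, up to the operator $\Dq$, an Eisenstein-type series and hence a quasimodular form of a definite weight; the crucial bookkeeping is that a single insertion $\ch_{k_i}$ raises the weight by at most $k_i + 2$, so that the product has weight at most $\sum_{i=1}^N (k_i + 2)$, which is precisely the asserted bound.

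The hypothesis $\KX \equiv 0$ enters only at the final stage and is what distinguishes Conjecture \ref{QinConj} from Conjecture \ref{OkoConj}. For an arbitrary surface the decomposition of the $\fG_{k_i}(L)$ into Heisenberg monomials carries terms proportional to the canonical class $\KX$ and to its self-intersection $\KX^2$, and the $q$-series attached to these terms are genuinely non-quasimodular multiple $q$-zeta values; this is why in general one can only assert the multiple $q$-zeta value statement. When $\KX$ is numerically trivial every intersection number involving $\KX$ vanishes, so all of these terms drop out and only the quasimodular building blocks survive, upgrading the multiple $q$-zeta value to a quasimodular form of the same weight bound.

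The main obstacle is the middle step in the full generality of arbitrary $N$ and $k_i$: one must control the combinatorial sum over generalized partitions uniformly and prove that \emph{every} surviving building block is quasimodular of the correct weight, and not merely a multiple $q$-zeta value. This requires both a sufficiently explicit description of how $\fG_{k}(L)$ decomposes into Heisenberg monomials and a uniform quasimodularity-with-weight statement for the associated $q$-series. It is for this reason that I would expect the argument to be completed unconditionally first in the lowest nontrivial case $\langle \ch_2^L \rangle'$, where the generalized-partition combinatorics and the requisite identities among multiple $q$-zeta values and quasimodular forms can be made entirely explicit, before attempting the general induction on weight.
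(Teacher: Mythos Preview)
The statement you are addressing is an open conjecture; the paper does not prove it in general. What the paper actually establishes is the single case $N=1$, $k_1=2$ (Corollary~\ref{Intro_corollary_ch2L} $=$ Corollary~\ref{corollary_ch2L}), deduced from the explicit formula of Theorem~\ref{theorem_ch2L}. Your overall strategy matches the paper's approach for that case: rewrite the series, via Lemma~\ref{FtoW}, as a trace involving the Carlsson--Okounkov vertex operator $\Wb(\fL_1,z)$ and the Chern character operators $\fG_{k_i}(\alpha_i)$, expand $\fG_{k_i}$ by Theorem~\ref{char_th}, and reduce via Lemma~\ref{ProofOfQin5.8} and Lemma~\ref{Qin5.9} to explicit $q$-series. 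You even identify, in your final paragraph, that the general case is obstructed precisely at this middle step.

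There are, however, two concrete gaps that prevent your sketch from being a proof of the conjecture. First, Theorem~\ref{char_th} gives $\fG_k(\alpha)$ only up to unknown universal constants $g_{1,\lambda}$, $g_{2,\lambda}$; an explicit formula is available only for $k\le 2$ (see \eqref{Tang5.6}), so for general $k_i$ one cannot even write down the Heisenberg decomposition you need. Second, and more seriously, your assertion that once $K_X\equiv 0$ ``only the quasimodular building blocks survive'' is not justified: the $q$-series attached to the $\langle 1_X,\alpha\rangle$- and $\langle e_X,\alpha\rangle$-terms are a priori only multiple $q$-zeta values, and showing that they land in $\Q[Z(2),Z(4),Z(6)]$ requires proving nontrivial identities case by case. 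In the paper this is the content of Lemmas~\ref{lemma_n2qn}--\ref{lemma_ijk100} together with \eqref{i+j=k+l.4}, and no general mechanism is known. Your weight bookkeeping (``a single insertion $\ch_{k_i}$ raises the weight by at most $k_i+2$'') is therefore a restatement of the conjecture rather than a step toward its proof.
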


In the region ${\rm Re} \, s > 1$, the Riemann zeta function is defined by 
$$
\zeta(s) = \sum_{n =1}^\infty \frac{1}{n^{s}}.
$$
The integers $s > 1$ give rise to a sequence of special values of the Riemann zeta function.
Multiple zeta values are series of the form
$$
\zeta(s_1, \ldots, s_k) = \sum_{n_1 > \cdots > n_k > 0} 
\frac{1}{n_1^{s_1} \cdots n_k^{s_k}}
$$
where $s_1, \ldots, s_k$ are positive integers with $s_1 > 1$, 
and $n_1, \ldots, n_k$ denote positive integers.
Multiple $q$-zeta values are $q$-deformations of 
$\zeta(s_1, \ldots, s_k)$, which may take different forms 
\cite{Bac, BK2, Bra1, Bra2, OT, Zhao, Zud}. 
The multiple $q$-zeta values defined by Okounkov \cite{Oko} 
are denoted by $Z(s_1, \ldots, s_k)$ where $s_1, \ldots, s_k > 1$ 
are integers (see Definition~\ref{def_qMZV}~(iv)). 
For instance, we have
$$
Z(2) = \sum_{n>0} \frac{q^n}{(1-q^n)^2}, \quad
Z(3) = \sum_{n>0} \frac{q^n(q^n+1)}{(1-q^n)^3}, 
$$
\begin{eqnarray}   \label{exampleZs}
Z(4) = \sum_{n>0} \frac{q^{2n}}{(1-q^n)^4}, \quad
Z(6) = \sum_{n>0} \frac{q^{3n}}{(1-q^n)^6}.
\end{eqnarray}
The weight of $Z(s_1, \ldots, s_k)$ is defined to be 
$s_1 + \ldots + s_k$. 
By \cite[Theorem~2.4]{BK3}, the $\Q$-linear span $\qMZV$ of 
all the multiple $q$-zeta values $Z(s_1, \ldots, s_k)$ with 
$s_1, \ldots, s_k > 1$ is an algebra over $\Q$. 
By \eqref{20170812522pm}, the set ${\bf QM}$ of all quasi-modular forms 
(of level $1$ on the full modular group ${\rm PSL}(2; \Z)$) over $\Q$ 
is a subalgebra of $\qMZV$:
$$
{\bf QM} = \Q\big [Z(2), Z(4), Z(6) \big ] \subset \qMZV.
$$

Conjecture~\ref{OkoConj} asserts that 
$\big \langle \ch_{k_1}^L \cdots \ch_{k_N}^L \big \rangle' \in \qMZV$,
and holds for $\big \langle \ch_1^L \big \rangle'$:
\begin{eqnarray}   \label{int_203001031038uuu.1}
  \big \langle \ch_{1}^L \big \rangle'
= \frac{1}{2} \big (Z(2) - Z(3) \big ) \cdot K_X^2 - Z(2) \cdot K_X L.
\end{eqnarray}
(see \eqref{203001031038uuu.1} below and \cite[Corollary~3]{CO}). 
Conjecture~\ref{QinConj} has been confirmed 
for $\big \langle \ch_{k_1}^L \cdots \ch_{k_N}^L \big \rangle'$ 
with $X = \C^2$ and $L = \mathcal O_X$ in the equivariant setting of 
\cite{Car1, Car2} where the equivariant canonical divisor of 
$X = \C^2$ is indeed trivial. By \eqref{int_203001031038uuu.1}, 
Conjecture~\ref{QinConj} is trivially true for 
$\big \langle \ch_1^L \big \rangle'$.

The main result of our paper is the following theorem about 
$\langle \ch_2^L \rangle'$.

\begin{theorem} \label{Intro_theorem_ch2L}
Let $L$ be a line bundle over a smooth projective surface $X$, 
and $K_X$ be the canonical divisor of $X$. Then, 
the reduced series $\langle \ch_2^L \rangle'$ is equal to
$$ 
\left (-\frac{7}{24} Z(4) - \frac{23}{24} Z(2)^2 \right ) \chi(X)
+ \frac12 (Z(3) - Z(2)) \cdot K_X L
$$
$$ 
+ \left (\frac{13}{12} Z(4) - \frac13 Z(2)^2 - \frac14 Z(3) 
+ \frac14 Z(2) \right ) K_X^2 + \frac12 Z(2) \cdot L^2.
$$
In particular, Conjecture~\ref{OkoConj} holds for the reduced series 
$\langle \ch_2^L \rangle'$.
\end{theorem}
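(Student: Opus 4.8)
The plan is to follow the procedure of \cite{QY}, which expresses intersection numbers on $\Xn$ in terms of Heisenberg operators and vertex-algebra techniques on the Fock space $\fock = \bigoplus_n H^*(\Xn)$. The first step is to rewrite $\ch_2(\Ln)$ as an explicit operator on $\fock$: using the description of the Chern character operators $\ch_k^L$ via the boundary operator $\mathfrak{d}$ and the Heisenberg generators $\mathfrak{a}_{-m}$, one writes $\ch_2(\Ln) = \mathsf{G}_2(L,n)$ as a normally-ordered sum of terms involving $\mathfrak{a}_{-m}\mathfrak{a}_m$, the cup-product by classes built from $L$, $K_X$ and the diagonal, and the genus-type operator coming from $c(T_\Xn)$. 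Concretely one expects a decomposition into a "$K_X L$"-piece, an "$L^2$"-piece, a "$K_X^2$"-piece and a "$\chi(X)$"-piece, matching the four terms in the statement; this reduces the computation of $\langle \ch_2^L\rangle$ to evaluating a handful of vacuum expectation values $\langle 0 | \cdots | 0\rangle$ of monomials in Heisenberg operators against $c(T_\Xn)$.

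The second step is to carry out these vacuum expectations. Each reduces, via the commutation relations $[\mathfrak a_m, \mathfrak a_{-m}] = m$ (up to the intersection pairing on $X$) and Wick's theorem, to a sum over pairings, i.e. over generalized partitions, weighted by $q^n$. Summing the resulting combinatorial series over $n$ produces, after dividing by $\langle\rangle = (q;q)_\infty^{-\chi(X)}$, expressions in the basic $q$-series $\sum_{n>0} q^{an}/(1-q^n)^b$. The key bookkeeping identity here is that the contribution of $c(T_\Xn)$ to a chain of $j$ Heisenberg operators is governed by the generating function whose logarithmic derivative is essentially $\sum q^n/(1-q^n)$, so that each "loop" in a pairing diagram contributes a factor expressible through $Z(2), Z(3), Z(4)$ and $Z(2)^2$. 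One then collects, for each of the four geometric coefficients $\chi(X)$, $K_X L$, $K_X^2$, $L^2$, the corresponding $q$-series.

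The third step is to massage the raw $q$-series output into the closed form stated. Here the earlier remarks on $\qMZV$ being an algebra, the identification ${\bf QM} = \Q[Z(2),Z(4),Z(6)]$, and relations such as \eqref{20170812522pm} and \eqref{exampleZs} are used to recognize $\sum q^{2n}/(1-q^n)^3$, $\sum q^{2n}/(1-q^n)^4$ and similar series as explicit $\Q$-linear combinations of $Z(4)$, $Z(2)^2$, $Z(3)$ and $Z(2)$ of weight $\le 4$. Since each term in the final formula has $q$-zeta weight $\le 4 = k_1 + 2$ with $k_1 = 2$, membership in $\qMZV$ of the required weight follows immediately, giving Conjecture~\ref{OkoConj} for $\langle \ch_2^L\rangle'$.

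The main obstacle is the second step: the operator $\ch_2(\Ln)$ is genuinely quadratic in the "$\mathfrak d$"-type corrections, so the commutator bookkeeping is considerably heavier than for $\ch_1^L$, and one must handle the interaction between the tautological class $\ch_2(\Ln)$ and the total Chern class $c(T_\Xn)$ carefully — in particular keeping track of the diagonal classes and the $K_X$-twists that arise from Lehn's formula for the Chern classes of $T_\Xn$. Organizing this so that the $q$-series genuinely collapse into the four advertised quasi-modular combinations, rather than into a larger, messier set of $q$-zeta values, is where the bulk of the work lies; a secondary, purely $q$-series obstacle is proving the requisite identities among the Okounkov $Z(s_1,\dots,s_k)$ (and their quasi-modular specializations) needed in step three.
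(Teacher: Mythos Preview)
Your outline follows essentially the same strategy as the paper --- express $\ch_2(\Ln)$ through Chern character operators, pair against $c(T_\Xn)$ using vertex-operator techniques on $\fock$, and then reduce the resulting $q$-series to $Z(2),Z(3),Z(4),Z(2)^2$ --- but two of the key technical inputs are not the ones you name. First, the paper does \emph{not} use Lehn's formula for $c(T_\Xn)$: instead it packages $c(T_\Xn)$ into the Carlsson--Okounkov Ext vertex operator $\Wb(\fL_1,z)$ and writes the generating series as a \emph{trace} $\Tr\, q^{\fn}\,\Wb(\fL_1,z)\,\fG_2(\alpha)$ (Lemma~\ref{FtoW}); the quantities you call ``vacuum expectation values $\langle 0|\cdots|0\rangle$'' are traces, and they are evaluated not by Wick's theorem but by the explicit trace formula of Lemma~\ref{Qin5.9}. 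Second, the decisive structural input for step~1 is Tang's closed formula \eqref{Tang5.6} for $\fG_2(\alpha)$; this is what breaks $F_2^\alpha$ into the four manageable pieces \eqref{F2Alpha.1}--\eqref{F2Alpha.4} before any trace is taken. The paper also first applies Grothendieck--Riemann--Roch to write $\ch_2(\Ln)=G_2(1_X,n)+G_1(L,n)+G_0(L^2/2,n)$, so the $K_XL$ and $L^2$ coefficients come cheaply from the already-known $F_1^L$ and $F_0^{L^2/2}$, leaving only $F_2^{1_X}$ as the genuine computation.

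None of this is a fatal gap --- your plan is morally the paper's plan --- but the ``heavy commutator bookkeeping'' you anticipate is exactly what the combination of $\Wb(\fL_1,z)$, Tang's formula, and Lemma~\ref{Qin5.9} is designed to bypass, and without naming those tools your step~2 remains a wish rather than a computation.
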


Since ${\bf QM} = \Q\big [Z(2), Z(4), Z(6) \big ]$, 
we immediately obtain the following.

\begin{corollary} \label{Intro_corollary_ch2L}
Let $L$ be a line bundle over a smooth projective surface $X$. 
If the canonical divisor of $X$ is numerically trivial, 
then $\langle \ch_2^L \rangle'$ 
is a quasi-modular form of weight at most $4$. 
In particular, Conjecture~\ref{QinConj} holds for 
$\langle \ch_2^L \rangle'$.
\end{corollary}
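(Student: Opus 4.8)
Since Corollary~\ref{Intro_corollary_ch2L} is a formal consequence of Theorem~\ref{Intro_theorem_ch2L} together with the identity ${\bf QM} = \Q\big[Z(2), Z(4), Z(6)\big]$, the real task is to establish the closed formula for $\langle \ch_2^L \rangle'$ in the theorem. The plan is to compute $\langle \ch_2^L \rangle'$ by realizing each intersection number $\int_{\Xn} \ch_2(\Ln)\, c(T_{\Xn})$ as a vacuum expectation value on the Fock space $\fock = \bigoplus_{n\ge 0} H^*(\Xn)$ and then summing the resulting $q$-series over $n$. First I would follow the procedure of \cite{QY}: rewrite the tautological insertion $\sum_n \ch_2(\Ln)$ in terms of the Chern character operators $\fG_i(\gamma)$ acting on $\fock$ (these are polynomials in the Heisenberg creation/annihilation operators whose structure constants are built from $1_X$, $L$, the canonical class $K_X$, and the class of the diagonal of $X$), and likewise express $c(T_{\Xn})$ through its known description in the same language, together with the boundary operator — this is the ingredient that accounts for the G\"ottsche normalization $(q;q)_\infty^{\chi(X)}$ appearing in $\langle\,\cdot\,\rangle'$. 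Because $\ch_2(\Ln)\in H^4(\Xn)$, a dimension count forces the $X$-dependence of the coefficient of $q^n$ to factor through the integral over $X$ of a degree-$4$ class assembled from $L$, $K_X$ and $c_2(X)$; hence $\langle \ch_2^L\rangle'$ is \emph{a priori} of the form
$$
a(q)\,\chi(X) + b(q)\, K_X L + c(q)\, K_X^2 + d(q)\, L^2
$$
with $a,b,c,d$ universal $q$-series independent of $(X, L)$, and the theorem amounts to identifying these four series.

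The core step is the explicit evaluation of $a, b, c, d$. Using the Heisenberg commutation relations and the explicit shape of $\fG_2(L)$, the pairing defining $\langle \ch_2^L\rangle$ collapses to a sum indexed by (generalized) partitions, in which a part of size $m$ contributes a factor that is a polynomial in $\tfrac{q^m}{1-q^m}$ and in the operator $\Dq$ applied to it; carrying out the combinatorial bookkeeping expresses each universal coefficient as an explicit sum over $m\ge 1$ of rational functions of $q^m$. I would then invoke the $q$-series identities recorded in the paper — built on the definition of Okounkov's $Z(s_1,\dots,s_k)$ and on examples such as \eqref{exampleZs} — to recognize these sums as $\Q$-linear combinations of $Z(4)$, $Z(2)^2$, $Z(3)$ and $Z(2)$, and finally divide by $\langle\,\rangle = (q;q)_\infty^{-\chi(X)}$, which is exactly the normalization defining the reduced series. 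This yields the closed form stated in Theorem~\ref{Intro_theorem_ch2L}; one can cross-check the universal coefficients against the equivariant $X=\C^2$, $L=\mathcal O_X$ computations of \cite{Car1, Car2} and against the known expression for $\langle\ch_1^L\rangle'$ in \eqref{int_203001031038uuu.1}. Conjecture~\ref{OkoConj} for $\langle\ch_2^L\rangle'$ is then immediate, since $Z(2), Z(3), Z(4), Z(2)^2\in\qMZV$ all have weight at most $4$.

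For Corollary~\ref{Intro_corollary_ch2L}: if $K_X$ is numerically trivial then $K_X L = 0$ and $K_X^2 = 0$, so the formula of Theorem~\ref{Intro_theorem_ch2L} collapses to
$$
\langle \ch_2^L \rangle' = \left(-\tfrac{7}{24}Z(4) - \tfrac{23}{24}Z(2)^2\right)\chi(X) + \tfrac12 Z(2)\cdot L^2 \ \in\ \Q\big[Z(2), Z(4)\big] \subseteq {\bf QM},
$$
a quasi-modular form of weight at most $4$; note that the $Z(3)$-terms — the only ones in the answer not expressible through $Z(2), Z(4), Z(6)$ — are exactly those carrying a factor of $K_X$, which is why they disappear precisely under the numerical-triviality hypothesis. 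This proves Conjecture~\ref{QinConj} for $\langle\ch_2^L\rangle'$.

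The step I expect to be the main obstacle is the middle one: pushing the Fock-space computation far enough to pin down \emph{all four} universal series (not merely their leading terms), and then proving the $q$-series identities that match them with the specific rational combinations of $Z(2), Z(3), Z(4)$ — in particular getting the delicate coefficients $-\tfrac{7}{24}$, $-\tfrac{23}{24}$, $\tfrac{13}{12}$, $-\tfrac13$, $-\tfrac14$, $\tfrac14$ exactly right, and correctly handling the generalized-partition bookkeeping produced by the $c(T_{\Xn})$ insertion. A secondary subtlety is keeping the normalization by $\langle\,\rangle$ and the passage to the reduced series consistent with the conventions of \cite{Oko, QY}.
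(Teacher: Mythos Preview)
Your derivation of the corollary from Theorem~\ref{Intro_theorem_ch2L} is correct and matches the paper's proof exactly: set $K_XL = K_X^2 = 0$, observe that the surviving expression lies in $\Q[Z(2),Z(4)] \subset {\bf QM}$, and note that the $Z(3)$-terms are precisely those killed by the hypothesis. Your extended sketch of the theorem's proof is superfluous here---the paper treats the corollary as an immediate consequence of the separately stated Theorem~\ref{Intro_theorem_ch2L} (together with \eqref{20170812522pm}), so you may simply cite it rather than re-derive it.
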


To prove Theorem~\ref{Intro_theorem_ch2L} (= Theorem~\ref{theorem_ch2L}), 
we first reduce the generating series $\langle \ch_2^L \rangle'$ to 
$(q;q)_\infty^{\chi(X)} \cdot F_2^\alpha(q)$ where 
$$
  F^{\alpha_1, \ldots, \alpha_N}_{k_1, \ldots, k_N}(q) 
= \sum_{n \ge 0} q^n \int_\Xn \left ( \prod_{i=1}^N 
  G_{k_i}(\alpha_i, n) \right ) c\big (T_\Xn \big )
$$
for cohomology classes $\alpha_1, \ldots, \alpha_N \in H^*(X)$ 
and nonnegative integers $k_1, \ldots, k_N$, 
and $G_{k}(\alpha, n)$ is the homogeneous component in 
$H^{|\alpha|+2k}(\Xn)$ of \eqref{DefOfGGammaN} 
for a homogeneous class $\alpha \in H^*(X)$. 
By Lemma~\ref{FtoW},  
$F^{\alpha_1, \ldots, \alpha_N}_{k_1, \ldots, k_N}(q)$ can be 
expressed in terms of the Ext vertex operators of 
Carlsson and Okounkov \cite{Car1, Car2, CO} and 
the Chern character operators $\mathfrak G_{k_i}(\alpha_i)$ 
which act on $H^*(\Xn)$ by the cup product with the classes
$G_{k_i}(\alpha_i, n)$. A formula for $\mathfrak G_2(\alpha)$ 
is given in \cite{Tang}, even though there is no formula for 
$\mathfrak G_k(\alpha)$ for a general $k > 2$.
Utilizing the procedure in \cite{QY},
we can compute $(q;q)_\infty^{\chi(X)} \cdot F_2^\alpha(q)$ 
explicitly as a power series of $q$. Let $1_X$ and $e_X$ be 
the fundamental class and Euler class of the surface $X$ respectively. 
After establishing various identities for multiple $q$-zeta values 
and quasi-modular forms, 
we will express $(q;q)_\infty^{\chi(X)} \cdot F_2^\alpha(q)$ as 
a linear combination of the intersection pairings
$$
\langle e_X, \alpha \rangle, \langle 1_X, \alpha \rangle, 
\langle K_X, \alpha \rangle, \langle K_X^2, \alpha \rangle
$$ 
whose coefficients involving only 
$Z(2), Z(3), Z(4)$ (see Theorem~\ref{lemma_ch2L} for details). 

The paper is organized as follows. 
Section~\ref{sect_qMZV} is devoted to the preliminaries about 
multiple $q$-zeta values and quasi-modular forms. 
Section~\ref{sect_Hilbert} recalls the basics of 
the Hilbert schemes of points on a smooth projective surface,
the Heisenberg operators of Grojnowski and Nakajima, 
and the Chern character operators. In Section~\ref{sect_CO}, 
we introduce the generating series 
$F^{\alpha_1, \ldots, \alpha_N}_{k_1, \ldots, k_N}(q)$ 
in terms of the Chern character operators.
Section~\ref{sect_F2Alpha} calculates the reduced series 
$(q;q)_\infty^{\chi(X)} \cdot F_2^{\alpha}(q)$. 
Theorem~\ref{Intro_theorem_ch2L} (= Theorem~\ref{theorem_ch2L}) 
is verified in Section~\ref{sect_Application}.

\bigskip\noindent
{\bf Convention.} All cohomology groups in this paper have 
coefficients in $\C$.


\section{\bf Multiple $q$-zeta values} 
\label{sect_qMZV}

In this section, we will recall basic definitions and facts 
regarding multiple $q$-zeta values and quasi-modular forms.

The following definitions and notations are from \cite{BK1, BK3, Oko}.

\begin{definition}  \label{def_qMZV}
Let $\N = \{1,2,3, \ldots\}$, and fix a subset $S \subset \N$. 
\begin{enumerate}
\item[{\rm (i)}]
Let $Q = \{Q_s(t)\}_{s \in S}$ where each $Q_s(t) \in \Q[t]$ is a polynomial 
with $Q_s(0) = 0$ and $Q_s(1) \ne 0$. For $s_1, \ldots, s_\ell \in S$ 
with $\ell \ge 1$, define
$$
Z_Q(s_1, \ldots, s_\ell)
= \sum_{n_1 > \cdots > n_\ell \ge 1} \prod_{i=1}^\ell 
  \frac{Q_{s_i}(q^{n_i})}{(1-q^{n_i})^{s_i}}
\in \Q[[q]].
$$
Put $Z_Q(\emptyset) = 1$, and define $Z(Q, S)$ to be the $\Q$-linear span 
of the set
$$
\{Z_Q(s_1, \ldots, s_\ell)| \, \ell \ge 0 \text{ and }
s_1, \ldots, s_\ell \in S \}.
$$

\item[{\rm (ii)}]
Define $\mathcal {MD} = Z(Q^E, \N)$ where $Q^E = \{Q_s^E(t)\}_{s \in \N}$
and 
$$
Q_s^E(t) = \frac{tP_{s-1}(t)}{(s-1)!}
$$ 
with $P_s(t)$ being the Eulerian polynomial defined by 
\begin{eqnarray}   \label{def_qMZV.01}
\frac{tP_{s-1}(t)}{(1-t)^s} = \sum_{d=1}^\infty d^{s-1}t^d.
\end{eqnarray}  
We have $t P_{0}(t) = t$.
For $s > 1$, the polynomial $t P_{s-1}(t)$ has degree $s - 1$. 
For $s_1, \ldots, s_\ell \in \N$ with $\ell \ge 1$, 
define the Bachmann-K\" uhn series
\begin{eqnarray}   \label{def_qMZV.02}
[s_1, \ldots, s_\ell] 
= Z_{Q^E}(s_1, \ldots, s_\ell)
= \sum_{n_1 > \cdots > n_\ell \ge 1} \prod_{i=1}^\ell 
  \frac{Q_{s_i}^E(q^{n_i})}{(1-q^{n_i})^{s_i}}.
\end{eqnarray}  

\item[{\rm (iii)}]
Define $\text{q}\mathcal {MD}$ be the subspace of $\mathcal {MD}$ linearly 
spanned by $1$ and all the brackets $[s_1, \ldots, s_\ell]$ with $s_1 > 1$. 

\item[{\rm (iv)}]
Define $\qMZV = Z(Q^O, \N_{>1})$ where 
$Q^O = \{Q_s^O(t)\}_{s \in \N_{>1}}$ and 
$$
  Q_s^O(t) 
= \begin{cases}
  t^{s/2}            &\text{if $s \ge 2$ is even;} \\
  t^{(s-1)/2} (t+1)  &\text{if $s \ge 3$ is odd.}
  \end{cases}
$$ 
For $s_1, \ldots, s_\ell \in \N$ with $\ell \ge 1$, 
define the Okounkov series
$$
Z(s_1, \ldots, s_\ell) = Z_{Q^O}(s_1, \ldots, s_\ell).
$$
\end{enumerate}
\end{definition}

We see from Definition~\ref{def_qMZV}~(ii) that for $s \ge 1$, 
\begin{eqnarray}   \label{def_qMZV.03}
(s-1)! \cdot \frac{Q_{s}^E(t)}{(1-t)^s} = \sum_{d=1}^\infty d^{s-1}t^d.
\end{eqnarray} 
It follows that 
\begin{eqnarray*}   
  (s-1)! \cdot [s] 
= (s-1)! \cdot \sum_{n \ge 1} \frac{Q_{s}^E(q^n)}{(1-q^n)^s} 
= \sum_{n \ge 1} \sum_{d=1}^\infty d^{s-1} q^{nd}
\end{eqnarray*} 
for $s \ge 1$. Therefore, we obtain
\begin{eqnarray}   \label{def_qMZV.04}
  [s] 
= \frac{1}{(s-1)!} \sum_{n, d \ge 1} d^{s-1} q^{nd}
= \frac{1}{(s-1)!} \sum_{d \ge 1} d^{s-1} \frac{q^{d}}{1-q^d}.
\end{eqnarray}  
More generally, for $s_1, \ldots, s_\ell \ge 1$, we have
\begin{eqnarray}   \label{def_qMZV.05}
  [s_1, \ldots, s_\ell] 
= \frac{1}{(s_1-1)! \cdots (s_\ell-1)!} 
  \sum_{\substack{n_1 > \cdots > n_\ell \ge 1\\d_1, \ldots, d_\ell \ge 1}} 
  d_1^{s_1-1} \cdots d_\ell^{s_\ell-1} q^{n_1d_1+\ldots+n_\ell d_\ell}.
\end{eqnarray} 

Some examples of the Bachmann-K\" uhn series $[s], s \ge 1$ are
\begin{eqnarray}   \label{example[s]}
[1] = \sum_{n>0} \frac{q^n}{1-q^n}, \quad
[2] = \sum_{n>0} \frac{nq^n}{1-q^n}, \quad
[3] = \frac12 \sum_{n>0} \frac{n^2 q^n}{1-q^n}.
\end{eqnarray}
Some examples of the Okounkov series $Z(s), s \ge 1$ are given by 
\eqref{exampleZs}.

By the Theorem~2.13 and Theorem~2.14 in \cite{BK1},
$\text{q}\mathcal {MD}$ is a subalgebra of $\mathcal {MD}$, 
and $\mathcal {MD}$ is a polynomial ring over $\text{q}\mathcal {MD}$ 
with indeterminate $[1]$:
\begin{eqnarray}   \label{BK1Thm214}
\mathcal {MD} = \text{q}\mathcal {MD}[\,[1]\,].
\end{eqnarray}
By the Proposition~2.2 and Theorem~2.4 in \cite{BK3}, 
$Z(\{Q_s^E(t)\}_{s \in \N_{>1}}, \N_{>1})$ is a subalgebra of 
$\mathcal {MD}$ as well and 
$\qMZV = Z(\{Q_s^E(t)\}_{s \in \N_{>1}}, \N_{>1})$. Therefore,   
\begin{eqnarray}   \label{BK3-2.4}
\qMZV = Z(\{Q_s^E(t)\}_{s \in \N_{>1}}, \N_{>1}) \subset \qMD \subset \MD
\end{eqnarray} 
are inclusions of $\Q$-algebras.  
For instance, by \cite[Example~2.6]{BK3}, 
\begin{eqnarray}   \label{BK3-2.6}
Z(2) = [2], \quad Z(3) = 2[3], \quad Z(4) = [4] - \frac16 [2].
\end{eqnarray}

The graded ring ${\bf QM}$ of quasi-modular forms (of level $1$  
on the full modular group ${\rm PSL}(2; \Z)$) over $\Q$ is 
the polynomial ring over $\Q$ generated by the Eisenstein series 
$G_2(q), G_4(q)$ and $G_6(q)$:
$$
{\bf QM} = \Q[G_2, G_4, G_6] = {\bf M}[G_2]
$$
where ${\bf M} = \Q[G_4, G_6]$ is the graded ring of modular forms 
(of level $1$) over $\Q$, and 
\begin{eqnarray*}    
G_{2k} = G_{2k}(q) =\frac{1}{(2k-1)!} \cdot 
\left (-\frac{B_{2k}}{4k} + \sum_{n \ge 1} 
\Big ( \sum_{d|n} d^{2k-1} \Big )q^n \right )
\end{eqnarray*}
where $B_i \in \Q, i \ge 2$ are the Bernoulli numbers defined by 
$$
\frac{t}{e^t - 1} = 1 - \frac{t}{2} + \sum_{i =2}^{+\infty} B_i \cdot \frac{t^i}{i!}.
$$
The grading is to assign $G_2, G_4, G_6$ weights $2, 4, 6$ respectively.
By \cite[p.6]{BK3}, 
\begin{eqnarray*}
G_2 &=& -\frac{1}{24} + Z(2),      \\
G_4 &=& \frac{1}{1440} + Z(2) + \frac{1}{6}Z(4),     \\
G_6 &=& -\frac{1}{60480} + \frac{1}{120} Z(2) + \frac{1}{4} Z(4) + Z(6).
\end{eqnarray*}
It follows that
\begin{eqnarray}  \label{20170812522pm} 
{\bf QM} = \Q[Z(2), Z(4), Z(6)].
\end{eqnarray}

\section{\bf Hilbert schemes of points on surfaces} 
\label{sect_Hilbert}

In this section, we will review the Hilbert schemes of points 
on surfaces and the geometric construction of 
the Heisenberg operators by Grojnowski and Nakajima. 
Furthermore, we will recall the concepts of generalized partitions 
and the Chern character operators. 

Let $X$ be a smooth projective complex surface,
and $\Xn$ be the Hilbert scheme of $n$ points in $X$. 
An element in $\Xn$ is represented by a
length-$n$ $0$-dimensional closed subscheme $\xi$ of $X$. For $\xi
\in \Xn$, let $I_{\xi}$ be the corresponding sheaf of ideals. It
is well known that $\Xn$ is irreducible and smooth with dimension $2n$. 
Define the universal codimension-$2$ subscheme:
\begin{eqnarray*}
{\mathcal Z}_n=\{(\xi, x) \subset \Xn\times X \, | \, x\in
 {\rm Supp}{(\xi)}\}\subset \Xn\times X.
\end{eqnarray*}
Denote by $p_1$ and $p_2$ the projections of $\Xn \times X$ to
$\Xn$ and $X$ respectively. A line bundle $L$ on $X$ induces 
the tautological rank-$n$ bundle $\Ln$ over $\Xn$:
\begin{eqnarray}    \label{L[n]}
\Ln = (p_{1}|_{\mathcal Z_n})_*\big (p_2^*L|_{\mathcal Z_n} \big ).
\end{eqnarray}
Let $\fock$ be the direct sum of the cohomology groups of the Hilbert schemes $\Xn$:
\begin{eqnarray*}
\fock = \bigoplus_{n=0}^\infty \Hn.
\end{eqnarray*}
For $m \ge 0$ and $n > 0$, let $Q^{[m,m]} = \emptyset$ and define
$Q^{[m+n,m]}$ to be the closed subset:
$$
\{ (\xi, x, \eta) \in X^{[m+n]} \times X \times X^{[m]} \, | \,
\xi \supset \eta \text{ and } \mbox{Supp}(I_\eta/I_\xi) = \{ x \}\}.
$$

We recall from \cite{Gro, Nak} the geometric construction of 
the Heisenberg operators on the space $\fock$. 
Let $\alpha \in H^*(X)$. Set $\mathfrak a_0(\alpha) =0$.
For an integer $n > 0$, the operator $\mathfrak
a_{-n}(\alpha) \in \End(\fock)$ is
defined by
$$
\mathfrak a_{-n}(\alpha)(a) = \w{p}_{1*}([Q^{[m+n,m]}] \cdot
\w{\rho}^*\alpha \cdot \w{p}_2^*a)
$$
for $a \in H^*(X^{[m]})$, where $\w{p}_1, \w{\rho},
\w{p}_2$ are the projections of $X^{[m+n]} \times X \times
X^{[m]}$ to $X^{[m+n]}, X, X^{[m]}$ respectively. Define
$\mathfrak a_{n}(\alpha) \in \End(\fock)$ to be $(-1)^n$ times the
operator obtained from the definition of $\mathfrak
a_{-n}(\alpha)$ by switching the roles of $\w{p}_1$ and $\w{p}_2$. 
We often refer to $\mathfrak a_{-n}(\alpha)$ (resp. $\mathfrak a_n(\alpha)$) 
as the {\em creation} (resp. {\em annihilation})~operator. 
The following is from \cite{Gro, Nak}. Our convention of the sign follows \cite{LQW2}.

\begin{theorem} \label{commutator}
The operators $\mathfrak a_n(\alpha)$ satisfy
the commutation relation:
\begin{eqnarray*}
\displaystyle{[\mathfrak a_m(\alpha), \mathfrak a_n(\beta)] = -m
\; \delta_{m,-n} \cdot \langle \alpha, \beta \rangle \cdot {\rm Id}_{\fock}}.
\end{eqnarray*}
The space $\fock$ is an irreducible module over the Heisenberg
algebra generated by the operators $\mathfrak a_n(\alpha)$ with a
highest~weight~vector $\vac=1 \in H^0(X^{[0]}) \cong \C$.
\end{theorem}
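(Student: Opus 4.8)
The plan is to establish the commutation relations by intersection theory on the products of Hilbert schemes, treating the equal-sign and opposite-sign cases separately, and then to deduce irreducibility by comparing the representation generated by $\vac$ against G\"ottsche's Betti-number formula. First I would record that, up to the scalar $(-1)^n$, the operator $\mathfrak a_n(\alpha)$ is the transpose of $\mathfrak a_{-n}(\alpha)$ with respect to the Poincar\'e pairing on $\fock$; this is immediate from the definition (switching the roles of $\w p_1$ and $\w p_2$) and lets me deduce every relation among annihilation operators from the corresponding relation among creation operators by adjunction. For the equal-sign case $[\mathfrak a_{-m}(\alpha), \mathfrak a_{-n}(\beta)] = 0$ with $m, n > 0$, I would compose the two correspondences in both orders: the composite $\mathfrak a_{-m}(\alpha)\mathfrak a_{-n}(\beta)$ is represented by the cycle of flags $(\xi \supset \zeta \supset \eta)$ in which $\Supp(I_\zeta/I_\xi)$ and $\Supp(I_\eta/I_\zeta)$ are single points carrying $\alpha$ and $\beta$, and swapping the order produces the same incidence cycle with the two added clusters interchanged. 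Since the clusters are added at independent points, the two composites agree as operators on $\fock$, so the commutator vanishes; the statement for two annihilation operators follows by transposition.

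The heart of the argument is the opposite-sign commutator $[\mathfrak a_{-m}(\alpha), \mathfrak a_n(\beta)]$ with $m, n > 0$. Here I would form the fiber products of the two correspondences $Q^{[\,\cdot\,,\,\cdot\,]}$ and analyze the difference cycle on $\Xn \times X^{[k]}$. The essential geometric input is a dimension estimate: the incidence locus where the cluster added by the creation operator and the cluster removed by the annihilation operator meet has dimension strictly smaller than expected unless the two clusters are supported at a common point and have equal length $m = n$, in which case the excess intersection concentrates along the small diagonal of $X$. This is where I would invoke the bound on the dimension of the punctual Hilbert scheme (Brian\c con--Iarrobino) together with the dimension of the nested Hilbert scheme $Q^{[k+n,k]}$ to show that the off-diagonal and unequal-length boundary contributions genuinely drop in dimension and hence contribute nothing. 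The surviving diagonal term is a multiple of $\langle \alpha, \beta \rangle \cdot {\rm Id}_{\fock}$, and computing it by a normal-bundle / self-intersection argument along the diagonal, combined with the sign $(-1)^n$ built into the definition of $\mathfrak a_n(\beta)$, yields exactly the coefficient $-m\,\delta_{m,-n}\,\langle \alpha, \beta \rangle$ of the theorem. I expect this dimension estimate to be the main obstacle, since it is the only step where the genuine surface geometry (rather than formal bookkeeping with correspondences) is needed.

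Finally, for irreducibility I would first observe that $\mathfrak a_n(\alpha)\vac = 0$ for all $n > 0$ and all $\alpha$: there is nothing to annihilate in $H^0(X^{[0]}) \cong \C$, so $\vac$ is a highest-weight vector. Because the pairing $\langle\,\cdot\,,\cdot\,\rangle$ on $H^*(X)$ is non-degenerate by Poincar\'e duality, the commutation relations just proved make $\fock$ a module over the Heisenberg algebra generated by the $\mathfrak a_n(\alpha)$, and the universal property of the induced (abstract Fock) module $F$ supplies a Heisenberg-module homomorphism $F \to \fock$ carrying the abstract vacuum to $\vac$. Standard Heisenberg theory shows that $F$ is irreducible, so this homomorphism, being nonzero, is injective. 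It then suffices to compare graded dimensions: the character of $F$ as the (super)symmetric algebra on the creation operators $\mathfrak a_{-n}(\alpha)$, indexed by $n \ge 1$ and a homogeneous basis of $H^*(X)$, matches $\sum_{n \ge 0} \dim H^*(\Xn)\, q^n$ as computed by G\"ottsche's formula in every degree. Since each graded piece is finite-dimensional, the injection $F \hookrightarrow \fock$ is a degreewise isomorphism, so $\fock \cong F$ is irreducible with highest-weight vector $\vac$, completing the proof.
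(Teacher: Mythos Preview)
The paper does not prove this theorem; it simply cites it from \cite{Gro, Nak} (with the sign convention from \cite{LQW2}) and uses it as background input. So there is no ``paper's own proof'' to compare against.

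That said, your sketch is essentially the Nakajima--Grojnowski argument and is broadly correct in outline: the dimension estimate on the nested incidence loci (via Brian\c con--Iarrobino bounds on punctual Hilbert schemes) is indeed the key step, and the irreducibility via a graded-dimension comparison with G\"ottsche's formula is the standard conclusion. One caveat: historically the intersection-theoretic argument readily shows $[\mathfrak a_m(\alpha),\mathfrak a_n(\beta)] = c\, m\,\delta_{m,-n}\langle\alpha,\beta\rangle\,{\rm Id}$ for some universal nonzero constant $c$, but pinning down $c=-1$ (in this sign convention) is not as immediate as your phrase ``normal-bundle / self-intersection argument along the diagonal'' suggests; in the original literature this constant was fixed by a separate computation (e.g.\ Ellingsrud--Str\o mme for $n=1$, or an equivariant/localization check). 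If you intend a self-contained proof, you should flag that the determination of the constant requires its own argument beyond the excess-intersection bookkeeping.
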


In the above theorem, the Lie bracket is understood in the super
sense according to the parity of the degrees of the
cohomology classes involved, and 
$$
\langle \alpha, \beta \rangle = \int_X \alpha \beta.
$$ 
When $\alpha$ and $\beta$ are divisors on $X$, we also use 
$\alpha \beta$ to stand for $\langle \alpha, \beta \rangle$.

It follows from
Theorem~\ref{commutator} that the space $\fock$ is linearly
spanned by all the Heisenberg monomials $\mathfrak
a_{n_1}(\alpha_1) \cdots \mathfrak a_{n_k}(\alpha_k) \vac$
where $k \ge 0$ and $n_1, \ldots, n_k < 0$.

\begin{definition} \label{partition}
\begin{enumerate}
\item[{\rm (i)}]
A {\it generalized partition} \index{partition, generalized} of an integer $n$ is of the form
$$
\lambda = (\cdots  (-2)^{m_{-2}}(-1)^{m_{-1}} 1^{m_1}2^{m_2} \cdots)
$$ 
such that part $i\in \Z$ has multiplicity $m_i$, $m_i \ne 0$ for 
only finitely many $i$'s, and $n = \sum_i i m_i$. Define 
$\ell(\la) = \sum_i m_i$, $|\la| = \sum_i im_i$, 
$|\la|_+ = \sum_{i > 0} im_i$, $s(\la) = \sum_i i^2m_i$,
and $\lambda^! = \prod_i m_i!$.

\item[{\rm (ii)}] 
Let $\alpha \in H^*(X)$ and $k \ge 1$. Define $\tau_{k*}: H^*(X) \to H^*(X^k)$ 
to be the linear map induced by the diagonal embedding $\tau_k: X \to X^k$, and
$$
(\mathfrak a_{m_1} \cdots \mathfrak a_{m_k})(\alpha)
= \mathfrak a_{m_1} \cdots \mathfrak a_{m_k}(\tau_{k*}\alpha)
= \sum_j \mathfrak a_{m_1}(\alpha_{j,1}) \cdots \mathfrak a_{m_k}(\alpha_{j,k})
$$ 
when $\tau_{k*}\alpha = \sum_j \alpha_{j,1} \otimes \cdots 
\otimes \alpha_{j, k}$ via the K\"unneth decomposition of $H^*(X^k)$.

\item[{\rm (iii)}]
For a generalized partition $\lambda = 
\big (\cdots (-2)^{m_{-2}}(-1)^{m_{-1}} 1^{m_1}2^{m_2} \cdots \big )$, define
\begin{eqnarray*}
\mathfrak a_{\lambda}(\alpha) = \left ( \prod_i (\mathfrak
a_i)^{m_i} \right ) (\alpha)
\end{eqnarray*}
where the product $\prod_i (\mathfrak
a_i)^{m_i} $ is understood to be
$\cdots \mathfrak a_{-2}^{m_{-2}} \mathfrak a_{-1}^{m_{-1}}
 \mathfrak a_{1}^{m_{1}} \mathfrak a_{2}^{m_{2}}\cdots$.
\end{enumerate}
\end{definition}

For $n > 0$ and a homogeneous class $\alpha \in H^*(X)$, let
$|\alpha| = s$ if $\alpha \in H^s(X)$, and let $G_k(\alpha, n)$ be
the homogeneous component in $H^{|\alpha|+2k}(\Xn)$ of
\begin{eqnarray}    \label{DefOfGGammaN}
 G(\alpha, n) = p_{1*}(\ch({\mathcal O}_{{\mathcal Z}_n}) \cdot p_2^*\alpha
\cdot p_2^*{\rm td}(X) ) \in \Hn
\end{eqnarray}
where $\ch({\mathcal O}_{{\mathcal Z}_n})$ denotes the Chern
character of ${\mathcal O}_{{\mathcal Z}_n}$
and ${\rm td}(X) $ denotes the Todd class. We extend the notion $G_k(\alpha,
n)$ linearly to an arbitrary class $\alpha \in H^*(X)$, 
and set $G(\alpha, 0) =0$. 
It was proved in \cite{LQW1} that the cohomology ring of $\Xn$ is
generated by the classes $G_{k}(\alpha, n)$ where $0 \le k < n$
and $\alpha$ runs over a linear basis of $H^*(X)$. 
The {\it Chern character operator} ${\mathfrak G}_k(\alpha) \in
\End({\fock})$ is the operator acting on $H^*(\Xn)$ by the cup product with $G_k(\alpha, n)$. 

The following result is from \cite{LQW2} 
(see also \cite[Theorem~4.7]{Qin1}).

\begin{theorem} \label{char_th}
Let $K_X$ and $e_X$ be the canonical divisor and Euler class of 
the smooth projective surface $X$. Let $k \ge 0$ and $\alpha \in H^*(X)$. 
Then, $\mathfrak G_k(\alpha)$ is equal to
\begin{eqnarray*}
& &- \sum_{\ell(\lambda) = k+2, |\lambda|=0}
   \frac{\mathfrak a_{\lambda}(\alpha)}{\lambda^!} 
   + \sum_{\ell(\lambda) = k, |\lambda|=0}
   \frac{\lambsq - 2}{24}
   \frac{\mathfrak a_{\lambda}(e_X \alpha)}{\lambda^!}  \\
&+&\sum_{\ell(\lambda) = k+1, |\lambda|=0} g_{1, \lambda} 
   \frac{\mathfrak a_{\lambda}(K_X \alpha)}{\la^!} 
   + \sum_{\ell(\lambda) = k, |\lambda|=0} g_{2, \lambda} 
   \frac{\mathfrak a_{\lambda}(K_X^2 \alpha)}{\la^!} 
\end{eqnarray*}
where all the numbers $g_{1, \lambda}$ and $g_{2, \lambda}$ are 
independent of $X$ and $\alpha$. 
\end{theorem}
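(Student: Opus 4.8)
The plan is to determine $\mathfrak G_k(\alpha)$ from its commutators with the Heisenberg operators, exploiting that $\fock$ is an irreducible module generated from $\vac$ (Theorem~\ref{commutator}). Since $G(\alpha,0)=0$ we have $\mathfrak G_k(\alpha)\vac=0$, so an operator is pinned down once all commutators $[\mathfrak a_m(\beta),\mathfrak G_k(\alpha)]$ are known as explicit combinations of Heisenberg monomials: matching these with the commutators of the claimed right-hand side, and checking that both annihilate $\vac$, proves the identity. Concretely I would run a double induction, on the Chern-character index $k$ and on the number of points $n$, so that each commutator with a creation operator $\mathfrak a_{-1}(\beta)$ reduces the assertion on $\Xn$ to lower data.

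The engine of the induction is a degree-$2$ operator $\mathfrak d$, Lehn's boundary derivation, a degree-$2$ operator closely related to $\mathfrak G_1(1_X)$. First I would establish its two structural properties: that $[\mathfrak d,\,\cdot\,]$ is a derivation of the Heisenberg monomials, and that it transfers the Chern-character tower, in the sense that
$$ [\mathfrak d, \mathfrak G_k(\alpha)] = (k+1)\,\mathfrak G_{k+1}(\alpha) + (\text{lower-order corrections}), $$
a relation whose degree is forced by $\deg\mathfrak d=2$. Granting this, the whole tower $\{\mathfrak G_k(\alpha)\}_{k\ge0}$ is generated by repeatedly applying $\mathrm{ad}(\mathfrak d)$ to the two base operators $\mathfrak G_0(\alpha)$ and $\mathfrak d$ itself, and the shape of the formula — a sum of $\mathfrak a_\lambda$ over generalized partitions with $|\lambda|=0$ — is preserved at each step because $\mathrm{ad}(\mathfrak d)$ is a derivation. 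The lengths $\ell(\lambda)\in\{k,k+1,k+2\}$ attached to the four families are then forced by tracking cohomological degree against the point-preserving constraint $|\lambda|=0$.

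The geometry enters in two places, both explaining why only the classes $\alpha, K_X\alpha, K_X^2\alpha, e_X\alpha$ occur. The base case is a Grothendieck--Riemann--Roch computation: expanding
$$ G(\alpha,n)=p_{1*}\big(\ch(\mathcal O_{\mathcal Z_n})\cdot p_2^*\alpha\cdot p_2^*{\rm td}(X)\big), \qquad {\rm td}(X)=1-\tfrac{K_X}{2}+\tfrac{K_X^2+e_X}{12}, $$
shows that the only classes on $X$ that can multiply $\alpha$ are $1,K_X,K_X^2,e_X$, the last two also arising from the Chern classes $c_1(T_X)=-K_X$, $c_2(T_X)=e_X$ of the normal bundle of the diagonal. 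The transfer commutator $[\mathfrak d,\mathfrak a_{-n}(\gamma)]$ is an excess-intersection computation on the incidence variety $Q^{[n,n-1]}$, and the same four classes govern its output; since every constant produced is a fixed intersection number on a diagonal in a power of $X$, with $X$ entering only through these classes, the resulting coefficients $g_{1,\lambda}$ and $g_{2,\lambda}$ are automatically independent of $X$ and of $\alpha$. The explicit values are extracted at the bottom of the induction: the leading family ($\ell(\lambda)=k+2$, class $\alpha$) comes from the unit term of ${\rm td}(X)$ and the top Chern-character contribution, giving coefficient $-1/\lambda^!$, the factor $1/\lambda^!$ accounting for the multiplicities in $\mathfrak a_\lambda$; the $e_X$ family ($\ell(\lambda)=k$) collects the $e_X/12$ term together with the diagonal self-intersection correction, and a direct evaluation produces the second-moment coefficient $(s(\lambda)-2)/24$, where $s(\lambda)=\sum_i i^2 m_i$ records the contribution of each part.

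The main obstacle is precisely this geometric input. Establishing the derivation-and-transfer properties of $\mathfrak d$ and carrying out the GRR computation for $G(\alpha,n)$ require careful control of excess intersection along the diagonals in the incidence varieties, and in particular a proof of \emph{closure} — that commutators of $\mathfrak d$ and of the $\mathfrak G_k$ with Heisenberg operators never leave the span of monomials $\mathfrak a_\lambda(\gamma)$ with $\gamma\in\{\alpha, K_X\alpha, K_X^2\alpha, e_X\alpha\}$. Threading the combinatorics of generalized partitions through the induction while extracting the exact coefficient $(s(\lambda)-2)/24$ is the technical heart of the argument; by comparison, the representation-theoretic reduction of the first paragraph and the degree bookkeeping are routine.
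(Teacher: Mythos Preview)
The paper does not prove Theorem~\ref{char_th}; it is quoted from \cite{LQW2} (see also \cite[Theorem~4.7]{Qin1}) and stated without proof, so there is no ``paper's own proof'' to compare against. That said, your sketch is essentially the strategy of \cite{LQW2}, which in turn builds on Lehn's work: introduce the boundary operator $\mathfrak d$ (cup product with $-\partial X^{[n]}$, closely related to $\mathfrak G_1(1_X)$), prove that $\mathrm{ad}(\mathfrak d)$ shifts the Chern-character index and is a derivation on Heisenberg monomials, and then propagate from the base cases $\mathfrak G_0(\alpha)=-\sum_{n>0}\mathfrak a_{-n}\mathfrak a_n(\alpha)$ and Lehn's formula $[\mathfrak d,\mathfrak a_{-n}(\alpha)]=\tfrac n2\sum_{i+j=n}\mathfrak a_{-i}\mathfrak a_{-j}(\alpha)+\binom{n}{2}\mathfrak a_{-n}(K_X\alpha)$.

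Your outline is correct in spirit, but a few points deserve tightening. First, the transfer identity is exact, not ``up to lower-order corrections'': one has $\mathfrak G_{k+1}(\alpha)=\tfrac{1}{k+1}[\mathfrak d,\mathfrak G_k(\alpha)]$ on the nose, which is what makes the induction clean. Second, the explicit leading and $e_X$ coefficients are not read off directly from GRR for $G(\alpha,n)$; in \cite{LQW2} they emerge from iterating $\mathrm{ad}(\mathfrak d)$ and using the commutator $[\mathfrak a_m(\alpha),\mathfrak a'_n(\beta)]$ (with $\mathfrak a'_n=[\mathfrak d,\mathfrak a_n]$), where the $e_X$ term arises from the self-intersection of the diagonal. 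Third, and this is the real content, the ``closure'' you flag as the obstacle is exactly Lehn's theorem on $[\mathfrak d,\mathfrak a_{-n}(\alpha)]$: once that geometric input is in hand, the rest is algebra. So your plan is right, but as written it is a roadmap to the proof in \cite{LQW2} rather than an independent argument.
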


Closed formulas for the coefficients $g_{1, \lambda}$ and 
$g_{2, \lambda}$ in Theorem~\ref{char_th} are unknown. However, 
recursive formulas to compute $g_{1, \lambda}$ and $g_{2, \lambda}$ 
have been obtained in \cite{Tang}. In particular, 
by \cite[Proposition 5.6]{Tang}, $\fG_2(\alpha)$ is equal to
$$
- \sum_{\ell(\lambda) = 4, |\lambda|=0}
   \frac{\mathfrak a_{\lambda}(\alpha)}{\lambda^!} 
   + \sum_{n > 0} \frac{n^2 - 1}{12}
   \fa_{-n}\fa_n(e_X \alpha)      
$$
\begin{eqnarray}   \label{Tang5.6}
- \sum_{\ell(\lambda) = 3, |\lambda|=0} \frac{|\la_+|-1}{2} 
   \frac{\mathfrak a_{\lambda}(K_X \alpha)}{\la^!}
   - \sum_{n > 0} \frac{(n-1)(2n-1)}{12} \, \fa_{-n}\fa_n(K_X^2 \alpha)
\end{eqnarray}
where $\la_+$ denotes the positive part of a generalized partition 
$\la$. 

\section{\bf The generating series 
$F^{\alpha_1, \ldots, \alpha_N}_{k_1, \ldots, k_N}(q)$} 
\label{sect_CO}

In this section, we will define the generating series 
$F^{\alpha_1, \ldots, \alpha_N}_{k_1, \ldots, k_N}(q)$ in terms of 
the Chern character operators,
and rewrite it by using the Ext vertex operators constructed 
by Carlsson and Okounkov \cite{CO, Car1, Car2}.

Let $L$ be a line bundle over the smooth projective surface $X$. 
Let $\mathbb E_L$ be the virtual vector bundle on $X^{[k]} \times X^{[\ell]}$
whose fiber at $(I, J) \in X^{[k]} \times X^{[\ell]}$ is given by
$$
\mathbb E_L|_{(I,J)} = \chi(\mathcal O, L) - \chi(J, I \otimes L).
$$
Let $\fL_m$ be the trivial line bundle on $X$ with a scaling action of $\C^*$ of 
character $z^m$, and let $\Delta_n$ be the diagonal in $\Xn \times \Xn$. Then, 
\begin{eqnarray}   \label{ResOfEToD}
\mathbb E_{\fL_m}|_{\Delta_n} = T_{\Xn, m},
\end{eqnarray}
the tangent bundle $T_\Xn$ with a scaling action of $\C^*$ of character $z^m$. 
By abusing notations, we also use $L$ to denote its first Chern class. Put
\begin{eqnarray}   
   \Gamma_{\pm}(z) 
&=&\exp \left ( \sum_{n>0} \frac{z^{\mp n}}{n} \fa_{\pm n} \right ), 
        \label{Gammaz}   \\
   \Gamma_{\pm}(L, z) 
&=&\exp \left ( \sum_{n>0} \frac{z^{\mp n}}{n} \fa_{\pm n}(L) \right ).   
        \label{GammaLz}
\end{eqnarray}

\begin{remark}  \label{SignDiff}
There is a sign difference between the Heisenberg commutation relations used in 
\cite{Car1} (see p.3 there) and in this paper (see Theorem~\ref{commutator}).
So for $n > 0$, our Heisenberg operators $\fa_{-n}(L)$ and $\fa_{n}(-L)$
are equal to the Heisenberg operators $\fa_{-n}(L)$ and $\fa_{n}(L)$ in \cite{Car1}.
Accordingly, our operators $\Gamma_-(L, z)$ and $\Gamma_+(-L, z)$ are equal to
the operators $\Gamma_-(L, z)$ and $\Gamma_+(L, z)$ in \cite{Car1}.
\end{remark}

The following commutation relations can be found in \cite{Car1} (see Remark~\ref{SignDiff}):
\begin{eqnarray}   
&[\Gamma_+(L, x), \Gamma_+(L', y)] = [\Gamma_-(L, x), \Gamma_-(L', y)] = 0,&
                                           \label{CarLemma5.1}          \\
&\Gamma_+(L, x)\Gamma_-(L', y) = (1-yx^{-1})^{\langle L, L' \rangle} \,
    \Gamma_-(L', y) \Gamma_+(L, x).&     \label{CarLemma5.2}
\end{eqnarray}

Let $\Wb(L, z): \fock \to \fock$ be the vertex operator constructed in \cite{CO, Car1} 
where $z$ is a formal variable. By \cite{Car1}, $\Wb(L, z)$ is defined via the pairing
\begin{eqnarray}   \label{def-WLz}
  \langle \Wb(L, z) \eta, \xi \rangle 
= z^{\ell-k} \cdot \int_{X^{[k]} \times X^{[\ell]}}
  (\eta \otimes \xi) \, c_{k+\ell}(\mathbb E_L)
\end{eqnarray}
for $\eta \in H^*(X^{[k]})$ and $\xi \in H^*(X^{[\ell]})$.
The main result in \cite{Car1} is (see Remark~\ref{SignDiff}):
\begin{eqnarray}   \label{WLz}
\Wb(L, z) = \Gamma_-(L-K_X, z) \, \Gamma_+(-L, z).
\end{eqnarray}

\begin{definition}
For $\alpha_1, \ldots, \alpha_N \in H^*(X)$ and 
integers $k_1, \ldots, k_N \ge 0$, define 
\begin{eqnarray}    \label{F-generating}
  F^{\alpha_1, \ldots, \alpha_N}_{k_1, \ldots, k_N}(q) 
= \sum_{n \ge 0} q^n \int_\Xn \left ( \prod_{i=1}^N 
  G_{k_i}(\alpha_i, n) \right ) c\big (T_\Xn \big )
\end{eqnarray}
where $c\big (T_\Xn \big )$ is the total Chern class of 
the tangent bundle $T_\Xn$ of $\Xn$. 
\end{definition}

By G\" ottsche's Theorem in \cite{Got}, we have
\begin{eqnarray}    \label{F-generating.1}
  F(q) 
= \sum_{n \ge 0} q^n \, \int_\Xn c(T_\Xn) 
= \sum_{n \ge 0} \chi(\Xn) q^n
= (q; q)_\infty^{-\chi(X)}. 
\end{eqnarray}
Inspired by \cite{Oko}, we define the reduced series
$$
(q; q)_\infty^{\chi(X)} \cdot F^{\alpha_1, \ldots, \alpha_N}_{k_1, \ldots, k_N}(q)
= \frac{F^{\alpha_1, \ldots, \alpha_N}_{k_1, \ldots, k_N}(q)}{F(q)}.
$$
The series $F^{\alpha_1, \ldots, \alpha_N}_{k_1, \ldots, k_N}(q)$
has been studied in \cite{QY}.
The following is \cite[Lemma~3.2]{QY}.

\begin{lemma}  \label{FtoW}
Let $\fn$ be the number-of-points operator, i.e., $\fn|_{H^*(\Xn)} = n \, {\rm Id}$. Then,
\begin{eqnarray}   \label{FtoW.0}
  F^{\alpha_1, \ldots, \alpha_N}_{k_1, \ldots, k_N}(q) 
= \Tr \, q^\fn \, \Wb(\fL_1, z) \, \prod_{i=1}^N \fG_{k_i}(\alpha_i)
\end{eqnarray}
where $\fL_1$ is the trivial line bundle on $X$ with a scaling action of 
$\C^*$ of character $z$.
\end{lemma}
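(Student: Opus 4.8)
The plan is to unwind both sides into integrals over $\Xn$ and to match them using the defining pairing \eqref{def-WLz} of $\Wb(\fL_1,z)$ together with the restriction identity \eqref{ResOfEToD}. First I would observe that each Chern character operator $\fG_{k_i}(\alpha_i)$ acts on $\Hn$ by cup product with $G_{k_i}(\alpha_i,n)$, and hence preserves the grading by $n$; writing $\gamma_n = \prod_{i=1}^N G_{k_i}(\alpha_i,n)$, the composite $\prod_i\fG_{k_i}(\alpha_i)$ is cup product by $\gamma_n$ on $\Hn$. Since $\fn$ is the number-of-points operator, the weighted trace decomposes as $\Tr\, q^\fn\, \Wb(\fL_1,z)\prod_i\fG_{k_i}(\alpha_i) = \sum_{n\ge 0} q^n\, \Tr_{\Hn}\big(B_n\circ(\cup\gamma_n)\big)$, where $B_n$ is the block of $\Wb(\fL_1,z)$ mapping $\Hn$ to itself; only these diagonal blocks enter, because the trace of an endomorphism of $\fock=\bigoplus_n\Hn$ sees only the length-preserving part and $\cup\gamma_n$ is itself length-preserving.

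The next step is to identify $B_n$ through \eqref{def-WLz}. Taking $k=\ell=n$ there, the prefactor $z^{\ell-k}=z^0=1$ drops out and $B_n$ is exactly the correspondence operator on $\Hn$ whose kernel is $c_{2n}(\mathbb E_{\fL_1})\in H^*(\Xn\times\Xn)$. I would then compute $\Tr_{\Hn}\big(B_n\circ(\cup\gamma_n)\big)$ by choosing a homogeneous basis $\{e_a\}$ of $\Hn$ with Poincar\'e-dual basis $\{e^a\}$ and evaluating $\sum_a\langle B_n(\gamma_n\, e_a),e^a\rangle$, with the super-sign dictated by the parities of the $e_a$. Applying \eqref{def-WLz} to each summand and using the K\"unneth expansion $\sum_a e_a\otimes e^a = [\Delta_n]$ of the diagonal class, the sum collapses to $\int_{\Xn\times\Xn} p_1^*\gamma_n\cdot[\Delta_n]\cdot c_{2n}(\mathbb E_{\fL_1})$. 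Pushing the diagonal class through the projection formula (equivalently, restricting to $\Delta_n\cong\Xn$) turns this into $\int_\Xn \gamma_n\cdot c_{2n}(\mathbb E_{\fL_1})|_{\Delta_n}$.

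Finally I would invoke the geometric input \eqref{ResOfEToD}, namely $\mathbb E_{\fL_1}|_{\Delta_n} = T_{\Xn,1}$, the tangent bundle $T_\Xn$ twisted by the character-$z$ line bundle $\fL_1$. If $x_1,\ldots,x_{2n}$ denote the Chern roots of $T_\Xn$, then $c_{2n}(T_{\Xn,1})=\prod_{i=1}^{2n}(x_i+z)=\sum_{j=0}^{2n} z^{2n-j}c_j(T_\Xn)$, so $\Tr_{\Hn}\big(B_n\circ(\cup\gamma_n)\big) = \int_\Xn\gamma_n\,\sum_j z^{2n-j}c_j(T_\Xn)$. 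Specializing $z=1$ (equivalently, noting that $\gamma_n$ is homogeneous of degree independent of $n$, so that integration against it selects a single Chern class and the surviving power of $z$ is a fixed global factor) identifies $\prod_i(x_i+1)=\sum_j c_j(T_\Xn)$ with the total Chern class $c(T_\Xn)$. Summing over $n$ then gives $\sum_n q^n\int_\Xn\gamma_n\, c(T_\Xn) = F^{\alpha_1,\ldots,\alpha_N}_{k_1,\ldots,k_N}(q)$, as claimed.

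I expect the main obstacle to be the consistent handling of the super-sign conventions. The trace over the $\Z/2$-graded space $\fock$ must be read as a supertrace, the K\"unneth decomposition of $[\Delta_n]$ carries signs tied to the parities of the $e_a$, and $\Wb(\fL_1,z)$ is itself built from the super-Heisenberg operators $\fa_{\pm n}$; ensuring these signs are mutually compatible, so that the trace genuinely produces $\int_\Xn\gamma_n\, c_{2n}(\mathbb E_{\fL_1})|_{\Delta_n}$ with the correct sign, is the delicate point. A secondary, purely bookkeeping, issue is the role of the formal variable $z$, which is precisely what converts the top Chern class of the twisted bundle $T_{\Xn,1}$ into the total Chern class $c(T_\Xn)$ upon the degree-matching built into $\int_\Xn$.
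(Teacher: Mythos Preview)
The paper does not prove this lemma at all; it simply records it as \cite[Lemma~3.2]{QY} and moves on. Your sketch, however, is the natural unwinding of the definitions and is correct. The chain of steps---splitting the trace over $\fock$ along the $n$-grading, reading off the diagonal block $B_n$ from \eqref{def-WLz} (where the prefactor $z^{\ell-k}$ indeed disappears), collapsing $\sum_a e_a\otimes e^a$ to the diagonal class so that the trace becomes $\int_{\Xn}\gamma_n\cdot c_{2n}(\mathbb E_{\fL_1})|_{\Delta_n}$, and then invoking \eqref{ResOfEToD}---is exactly how one proves such an identity.

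On your two flagged concerns: the super-signs are harmless, because the sign in the supertrace and the sign in the K\"unneth expansion of $[\Delta_n]$ are the same sign and cancel, so the trace really is $\int_{\Xn}\gamma_n\cdot c_{2n}(\mathbb E_{\fL_1})|_{\Delta_n}$ on the nose. And your handling of the formal variable is right: since $\gamma_n$ has cohomological degree $\sum_i(|\alpha_i|+2k_i)$ independent of $n$, integration over $\Xn$ picks out a single $c_j(T_\Xn)$ from the expansion $c_{2n}(T_{\Xn,1})=\sum_j t^{2n-j}c_j(T_\Xn)$, leaving a global power of the equivariant parameter that is constant in $n$. This is precisely what Remark~\ref{RMK_FtoW}(i) is acknowledging when it says that $t=1$ is implicitly set.
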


\begin{remark}   \label{RMK_FtoW}
\begin{enumerate}
\item[{\rm (i)}]
In \eqref{FtoW.0}, we have implicitly set $t = 1$ for 
the generator $t$ of the equivariant cohomology $H^*_{\C^*}({\rm pt})$ 
of a point.

\item[{\rm (ii)}]
Let $1_X, K_X, e_X$ be the fundamental class, canonical divisor, 
Euler class of the surface $X$ respectively. 
By \eqref{FtoW.0} and Theorem~\ref{char_th}, 
$F^{\alpha_1, \ldots, \alpha_N}_{k_1, \ldots, k_N}(q)$ is 
an infinite linear combination of the expressions
\begin{eqnarray}   \label{epsilonialphai}
\Tr \, q^\fn \, {\bf W}(\fL_1, z) \, \prod_{i=1}^N 
     \frac{\fa_{\la^{(i)}}(\epsilon_i \alpha_i)}{\la^{(i)!}}
\end{eqnarray}
where $\epsilon_i \in \{1_X, e_X, K_X, K_X^2\}$ 
and $\la^{(1)}, \ldots, \la^{(N)}$ are generalized partitions satisfying 
$|\la^{(i)}| = 0$ and $\ell(\la^{(i)}) = k+2-|\epsilon_i|/2$.
\end{enumerate}
\end{remark}

Let $\lambda = \big (\cdots (-2)^{s_{-2}}(-1)^{s_{-1}} 
1^{s_1}2^{s_2} \cdots \big )$
and $\mu = \big (\cdots (-2)^{t_{-2}}(-1)^{t_{-1}} 
1^{t_1}2^{t_2} \cdots \big )$ be two generalized partitions. 
If $t_i \le s_i$ for every $i$, we write $\mu \le \la$ and define
$$
\la - \mu 
= \big (\cdots (-2)^{s_{-2} - t_{-2}}(-1)^{s_{-1}-t_{-1}} 1^{s_1-t_1}2^{s_2-t_2} \cdots \big ).
$$
The following is from the proof of \cite[Theorem~5.8]{Qin1} 
and deals with \eqref{epsilonialphai}. 

\begin{lemma}   \label{ProofOfQin5.8}
Let $\la^{(1)}, \ldots, \la^{(N)}$ be generalized partitions. Then, 
\begin{eqnarray}     \label{ThmJJkAlpha.3}
& &\Tr \, q^\fn \, {\bf W}(\fL_1, z) \, \prod_{i=1}^N 
     \frac{\fa_{\la^{(i)}}(\alpha_i)}{\la^{(i)!}}   \nonumber   \\
&=&z^{\sum_{i=1}^N |\la^{(i)}|} \cdot \sum_{\substack{\w \la^{(1)} 
     \le \la^{(1)}, \ldots, \w \la^{(N)} \le \la^{(N)}\\
     \sum_{i=1}^N (|\la^{(i)}| - |\w \la^{(i)}|) = 0}}
     \,\, \prod_{\substack{1 \le i \le N\\n \ge 1}} \left ( 
     \frac{(-1)^{p^{(i)}_{n}}}{p^{(i)}_{n}!} 
     \frac{q^{n p^{(i)}_{n}}}{(1-q^n)^{p^{(i)}_{n}}}  
     \frac{1}{\w p^{(i)}_{n}!} \frac{1}{(1-q^n)^{\w p^{(i)}_{n}}} \right )
     \nonumber   \\
& &\cdot \Tr \, q^\fn \prod_{i=1}^N  \frac{\fa_{\la^{(i)} - \w \la^{(i)}}
     \big ((1_X - K_X)^{\sum_{n \ge 1} p^{(i)}_{n}}\alpha_i \big )}
     {\big (\la^{(i)}-\w \la^{(i)} \big )^!}
\end{eqnarray}
where $\w \la^{(i)} = \big ( \cdots (-n)^{\w p^{(i)}_n} \cdots 
(-1)^{\w p^{(i)}_1} 1^{p^{(i)}_1} \cdots n^{p^{(i)}_n} \cdots \big ), 
1 \le i \le N$ are generalized partitions, together with the convention 
that for an empty generalized partition $\mu$,
\begin{eqnarray}     \label{IntBeta}
\frac{\fa_\mu(\beta)}{\mu^!} = \int_X \beta = \langle \beta, 1_X \rangle 
= \langle \beta \rangle.
\end{eqnarray}
\end{lemma}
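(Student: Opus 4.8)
The approach is the normal-ordering calculation for traces on the bosonic Fock space $\fock$ that underlies the proof of \cite[Theorem~5.8]{Qin1}; here is a sketch of its main steps. Since we have set $t=1$, the first Chern class of $\fL_1$ is $1_X$, so \eqref{WLz} reads $\Wb(\fL_1,z)=\Gamma_-(1_X-K_X,z)\,\Gamma_+(-1_X,z)$. I would substitute this into the left-hand side of \eqref{ThmJJkAlpha.3} and abbreviate $\mathcal O:=\prod_{i=1}^N\fa_{\la^{(i)}}(\alpha_i)/\la^{(i)!}$, so that one must compute $\Tr\, q^{\fn}\,\Gamma_-(1_X-K_X,z)\,\Gamma_+(-1_X,z)\,\mathcal O$. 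By the K\"unneth decomposition, $\fa_{\la^{(i)}}(\alpha_i)=\fa_{\la^{(i)}}(\tau_*\alpha_i)$ is a finite sum of products of creation operators $\fa_{-n}$ (from the negative parts of $\la^{(i)}$) and annihilation operators $\fa_n$ (from the positive parts), with the cohomology classes linked through $\tau_*\alpha_i$; retaining that linkage is what ultimately reproduces the diagonal class $(1_X-K_X)^{\sum_n p^{(i)}_n}\alpha_i$ in the residual trace. The tools are the Heisenberg relations of Theorem~\ref{commutator}, the exchange relation \eqref{CarLemma5.2}, cyclicity of the (super)trace, and the conjugation rule $q^{\fn}\fa_m(\beta)=q^{-m}\,\fa_m(\beta)\,q^{\fn}$.

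The second step is the normal ordering. Moving the annihilation half $\Gamma_+(-1_X,z)$ of $\Wb$ rightward through $\mathcal O$: by Theorem~\ref{commutator} it commutes past every annihilation factor of $\mathcal O$, and on crossing a creation factor $\fa_{-n}(\beta)$ it produces a scalar proportional to $\langle 1_X,\beta\rangle$; summing over the K\"unneth components of $\tau_*\alpha_i$, this deletes one negative part $-n$ from the relevant $\la^{(i)}$ while leaving $\alpha_i$ unchanged, since $\pi_*\tau_{k*}\alpha_i=\tau_{(k-1)*}\alpha_i$ for the projection $\pi\colon X^k\to X^{k-1}$ forgetting the last factor. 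Dually, moving the creation half $\Gamma_-(1_X-K_X,z)$ of $\Wb$ around the trace (past $q^{\fn}$) to reach $\mathcal O$ from the right: it commutes past every creation factor and, on crossing an annihilation factor $\fa_n(\beta)$, produces a scalar proportional to $\langle\beta,1_X-K_X\rangle$; summing over K\"unneth components, this deletes one positive part $n$ and multiplies the surviving diagonal class by $1_X-K_X$, via $\pi_*\big((1^{\otimes(k-1)}\otimes K_X)\cdot\tau_{k*}\alpha_i\big)=\tau_{(k-1)*}(K_X\alpha_i)$. Each such contraction also carries a power of the formal variable $z$ and, depending on the cyclic positions of the two contracted operators relative to $q^{\fn}$, a power of $q$ coming from conjugation by $q^{\fn}$.

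Summing over all ways of choosing, for each $i$, $p^{(i)}_n$ of the $n$'s of $\la^{(i)}$ to pair with $\Gamma_-$ and $\w p^{(i)}_n$ of the $(-n)$'s to pair with $\Gamma_+$ — this choice being precisely the datum of a generalized partition $\w\la^{(i)}\le\la^{(i)}$ — and accounting for the number of ways to select the paired copies among the identical operators together with the Taylor coefficients of the exponentials defining $\Gamma_\pm$, and then resumming the geometric series in $q^n$ that arise from the conjugations by $q^{\fn}$, one gets the factors $\tfrac{(-1)^{p^{(i)}_n}}{p^{(i)}_n!}\,\tfrac{q^{np^{(i)}_n}}{(1-q^n)^{p^{(i)}_n}}$ and $\tfrac{1}{\w p^{(i)}_n!\,(1-q^n)^{\w p^{(i)}_n}}$, while all remaining powers of $z$ collapse to the prefactor $z^{\sum_i|\la^{(i)}|}$. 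A standard feature of such trace normal-orderings is that, once all contractions with $\mathcal O$ have been summed, the vertex operator $\Wb(\fL_1,z)$ is eliminated entirely from the trace — this uses the triviality of the mutual pairing $\langle -1_X,1_X-K_X\rangle=0$ — leaving the uncontracted factors of $\mathcal O$, which are exactly $\fa_{\la^{(i)}-\w\la^{(i)}}\big((1_X-K_X)^{\sum_n p^{(i)}_n}\alpha_i\big)/(\la^{(i)}-\w\la^{(i)})^!$. Finally, $\Tr\, q^{\fn}\prod_i\fa_{\la^{(i)}-\w\la^{(i)}}(\cdots)$ vanishes unless $\sum_i(|\la^{(i)}|-|\w\la^{(i)}|)=0$, since a trace of a nonzero $\fn$-degree-shifting operator is $0$; and when every part gets contracted the convention \eqref{IntBeta} is just the $N=0$ instance of this residual trace. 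Assembling these contributions gives \eqref{ThmJJkAlpha.3}.

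The hard part is this bookkeeping: correctly tracking the formal variable $z$ through the infinitely many conjugations of $\Gamma_\pm(\cdot,z)$ by $q^{\fn}$ and past one another via \eqref{CarLemma5.2}, resumming the nested geometric series, and matching every sign, $q$-power, and factorial against the stated closed form — the sign $(-1)^{p^{(i)}_n}$ and the numerator $q^{np^{(i)}_n}$ in particular arise because the $\fa_n$'s of $\mathcal O$ have to be carried around past $q^{\fn}$ to reach the creation half $\Gamma_-(1_X-K_X,z)$ of $\Wb$. A routine but delicate secondary issue is keeping the Koszul signs from odd cohomology classes and unwinding the diagonal classes $\tau_*\alpha_i$ through the projection-formula identities used above. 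Since all of this is carried out in detail in \cite{Qin1}, one may alternatively simply refer to the proof of \cite[Theorem~5.8]{Qin1}.
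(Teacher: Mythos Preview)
The paper does not give its own proof of this lemma: it simply records it as extracted ``from the proof of \cite[Theorem~5.8]{Qin1}''. Your proposal recapitulates precisely that normal-ordering argument for traces of vertex operators on the Fock space, and you correctly acknowledge this at the end. So your approach and the paper's are the same; your write-up just supplies the details that the paper delegates to the citation.
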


The leading term in the series 
$F^{\alpha_1, \ldots, \alpha_N}_{k_1, \ldots, 
k_N}(q)$ follows from Remark~\ref{RMK_FtoW}~(ii) and the leading term 
on the the right-hand-side of \eqref{ThmJJkAlpha.3}. 
We refer to the Theorem~5.8 and Theorem~5.12 in \cite{Qin1} for details.
When $N = 1$, the right-hand-side of \eqref{ThmJJkAlpha.3} 
has been worked out explicitly in \cite[Remark~5.9]{Qin1}. 

\begin{lemma}   \label{Qin5.9}
Let $\la = (\cdots (-n)^{\w m_n} \cdots 
(-1)^{\w m_1} 1^{m_1}\cdots n^{m_n} \cdots)$ be a generalized partition. 
For $n_1 \ge 1$ with $m_{n_1} \cdot \w m_{n_1} \ge 1$, define
$m_{n_1}(n_1) = m_{n_1}-1$, $\w m_{n_1}(n_1) = \w m_{n_1}-1$,
and $m_n(n_1) = m_n$, $\w m_n(n_1) = \w m_n$ if $n \ne n_1$. 
Then, 
$
\displaystyle{\Tr \, q^\fn \, {\bf W}(\fL_1, z) \, 
\frac{\fa_\la(\alpha)}{\la^!}}
$ 
equals
$$
{z^{|\la|}} {(q; q)_\infty^{-\chi(X)}} \cdot
   \langle (1_X - K_X)^{\sum_{n \ge 1} m_{n}}, \alpha \rangle   \cdot \prod_{n \ge 1} \left ( \frac{(-1)^{m_{n}}}{m_{n}!} \frac{q^{n m_n}}{(1-q^n)^{m_n}} 
   \frac{1}{\w m_{n}!} \frac{1}{(1-q^n)^{\w m_n}} \right )
$$
\begin{eqnarray*}    
&+ &{z^{|\la|}} {(q; q)_\infty^{-\chi(X)}} \cdot \langle e_X, \alpha \rangle 
   \cdot \sum_{n_1 \ge 1 \, \text{\rm with } m_{n_1} \cdot \w m_{n_1} \ge 1} 
   \frac{(-n_1) q^{n_1}}{1-q^{n_1}} \cdot                  \nonumber   \\
& &\cdot \prod_{n \ge 1} \left ( \frac{(-1)^{m_{n}(n_1)}}{m_{n}(n_1)!} 
   \frac{q^{n m_n(n_1)}}{(1-q^n)^{m_n(n_1)}} 
   \frac{1}{\w m_{n}(n_1)!} \frac{1}{(1-q^n)^{\w m_n(n_1)}} \right ).              
\end{eqnarray*}
\end{lemma}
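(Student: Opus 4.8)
The plan is to apply Lemma~\ref{ProofOfQin5.8} with $N = 1$ and then evaluate the residual trace that it produces. Taking $\la^{(1)} = \la$ and $\alpha_1 = \alpha$ in \eqref{ThmJJkAlpha.3}, the balancing condition $\sum_i(|\la^{(i)}| - |\w\la^{(i)}|) = 0$ becomes $|\w\la| = |\la|$, and the right-hand side becomes $z^{|\la|}$ times a sum, over generalized partitions $\w\la = \big(\cdots(-n)^{\w p_n}\cdots(-1)^{\w p_1}1^{p_1}\cdots n^{p_n}\cdots\big)$ with $\w\la \le \la$ and $|\w\la| = |\la|$, of
$$
\prod_{n\ge 1}\left(\frac{(-1)^{p_n}}{p_n!}\frac{q^{np_n}}{(1-q^n)^{p_n}}\frac{1}{\w p_n!}\frac{1}{(1-q^n)^{\w p_n}}\right)\cdot\Tr\, q^\fn\,\frac{\fa_{\la - \w\la}(\gamma)}{(\la - \w\la)^!},
$$
where $\gamma = (1_X - K_X)^{\sum_{n\ge 1}p_n}\alpha$. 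Everything thus reduces to the residual trace $\Tr\, q^\fn\,\fa_\mu(\gamma)/\mu^!$.

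I would evaluate this last trace by a standard contraction argument on $\fock$. Since the monomial $\fa_\mu$ is already normal ordered and splits over the modes, its $q^\fn$-weighted trace vanishes unless, for every $n\ge 1$, the parts $n$ and $-n$ occur in $\mu$ with equal multiplicity; equivalently $\mu$ is a disjoint union of $p := \ell(\mu)/2$ pairs $\big((-n)(n)\big)$. For such $\mu$, pairing each creation operator with an annihilation operator of the same mode, each pair $\big(\fa_{-n},\fa_n\big)$ contributes a factor $-n\,q^n/(1-q^n)$, and, summing over the K\"unneth components of $\tau_{2*}\gamma$ and using the self-intersection identity $\tau_2^*\tau_{2*}\gamma = e_X\gamma$, the contractions collapse $\gamma$ to a factor $\langle e_X^p,\gamma\rangle$; the full trace equals $(q;q)_\infty^{-\chi(X)}$ times this. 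Because $X$ is a surface, $e_X\in H^4(X)$ forces $e_X^2 = 0$, so only $p\in\{0,1\}$ survive: $\mu = \emptyset$ gives $\langle\gamma\rangle\,(q;q)_\infty^{-\chi(X)}$, and $\mu = \big((-n_1)(n_1)\big)$ (for which $\mu^! = 1$) gives $-n_1\,\frac{q^{n_1}}{1-q^{n_1}}\,\langle e_X,\gamma\rangle\,(q;q)_\infty^{-\chi(X)}$, while every other $\mu$ contributes $0$.

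Substituting back, the only $\w\la$ that survive are $\w\la = \la$ --- for which $\la - \w\la = \emptyset$, $\sum_n p_n = \sum_n m_n$, the $q$-prefactor is the one appearing in the first term of the claimed formula, and $\int_X\gamma = \langle(1_X - K_X)^{\sum_n m_n},\alpha\rangle$ --- and the partitions obtained from $\la$ by deleting one part $n_1$ together with one part $-n_1$, which satisfy $\w\la\le\la$ exactly when $m_{n_1}\w m_{n_1}\ge 1$; for these $\sum_n p_n = (\sum_n m_n)-1$ and the $q$-prefactor is the one appearing in the second term (written with $m_n(n_1),\w m_n(n_1)$). Since $e_X K_X = 0$ on a surface for degree reasons, $\langle e_X,\gamma\rangle = \langle e_X,(1_X - K_X)^{(\sum_n m_n)-1}\alpha\rangle = \langle e_X,\alpha\rangle$, and pulling out the common factors $z^{|\la|}$, $(q;q)_\infty^{-\chi(X)}$ and $-n_1 q^{n_1}/(1-q^{n_1})$ reproduces the second term. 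The hard part will be the bookkeeping in the residual-trace step --- reconciling the combinatorics of the pairings with the denominators $\mu^!$, $p_n!$ and $\w p_n!$, and tracking the super-signs coming from the odd cohomology of $X$ --- whereas the collapse to just two terms is forced painlessly by $e_X^2 = 0$.
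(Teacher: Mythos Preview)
Your proposal is correct and follows exactly the route the paper indicates: the paper does not prove Lemma~\ref{Qin5.9} in-text but states that it is obtained by specializing Lemma~\ref{ProofOfQin5.8} to $N=1$ and working out the right-hand side of \eqref{ThmJJkAlpha.3}, referring to \cite[Remark~5.9]{Qin1} for the computation. Your reduction via the mode-by-mode vanishing of $\Tr\,q^{\fn}\,\fa_\mu(\gamma)/\mu^!$ unless each part $n$ and $-n$ of $\mu$ occur with equal multiplicity, together with the collapse to $p\in\{0,1\}$ forced by $e_X^2=0$, is precisely that computation, and your identification of the two surviving terms with the two displayed summands is accurate.
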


\section{\bf Computation of $(q;q)_\infty^{\chi(X)} \cdot 
F_2^{\alpha}(q)$} 
\label{sect_F2Alpha}

In this section, we will compute the reduced series 
$(q;q)_\infty^{\chi(X)} \cdot F_2^{\alpha}(q)$ 
where $\alpha \in H^*(X)$ with $X$ denoting a smooth projective surface. 
The results will be used in next section 
to partially verified Okounkov's Conjecture. 
\subsection{\bf A preliminary formula for $(q;q)_\infty^{\chi(X)} \cdot 
F_2^{\alpha}(q)$} 
\label{subsect_preliminary}
$\,$
\par

In this subsection, we will obtain a preliminary formula for 
$(q;q)_\infty^{\chi(X)} \cdot F_2^{\alpha}(q)$.
By \eqref{FtoW.0} and \eqref{Tang5.6}, $F_2^{\alpha}(q)$ is equal to
the sum of the next four lines:
\begin{eqnarray}   \label{F2Alpha.1}
\sum_{n > 0} \frac{n^2 - 1}{12}
   \Tr \, q^\fn \, {\bf W}(\fL_1, z) \fa_{-n}\fa_n(e_X\alpha)      
\end{eqnarray}
\begin{eqnarray}   \label{F2Alpha.2}
- \sum_{n > 0} \frac{(n-1)(2n-1)}{12} \, 
\Tr \, q^\fn \, {\bf W}(\fL_1, z) \fa_{-n}\fa_n(K_X^2\alpha)
\end{eqnarray}
\begin{eqnarray}   \label{F2Alpha.3}
- \sum_{\ell(\lambda) = 3, |\lambda|=0} \frac{|\la_+|-1}{2}
\Tr \, q^\fn \, {\bf W}(\fL_1, z) \frac{\fa_{\la}(K_X\alpha)}{\la^!}
\end{eqnarray}
\begin{eqnarray}   \label{F2Alpha.4}
- \sum_{\ell(\lambda) = 4, |\lambda|=0}
   \Tr \, q^\fn \, {\bf W}(\fL_1, z)
   \frac{\mathfrak a_{\lambda}(\alpha)}{\lambda^!}.
\end{eqnarray}

The first two lines \eqref{F2Alpha.1} and \eqref{F2Alpha.2} 
can be calculated easily. 

\begin{lemma}   \label{first2terms}
Let $\alpha \in H^*(X)$. Then, 
\begin{enumerate}
\item[{\rm (i)}]
\eqref{F2Alpha.1} is equal to
$$
-(q;q)_\infty^{-\chi(X)} \cdot \langle e_X, \alpha \rangle \cdot 
\sum_{n > 0} \frac{n^2 - 1}{12} \frac{q^n}{(1-q^n)^2}.
$$

\item[{\rm (ii)}]
\eqref{F2Alpha.2} is equal to
$$
(q;q)_\infty^{-\chi(X)} \cdot \langle K_X^2, \alpha \rangle \cdot 
\sum_{n > 0} \frac{(n-1)(2n-1)}{12} \frac{q^n}{(1-q^n)^2}.
$$
\end{enumerate}
\end{lemma}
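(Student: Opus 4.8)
The plan is to recognize both \eqref{F2Alpha.1} and \eqref{F2Alpha.2} as instances of Lemma~\ref{Qin5.9}. Take the single generalized partition $\la = \big( (-n)^1 n^1 \big)$; then $|\la| = 0$, $\la^! = 1$, and $\fa_\la(\gamma)/\la^! = \fa_{-n}\fa_n(\gamma)$, so each summand of \eqref{F2Alpha.1} (resp.\ \eqref{F2Alpha.2}) is precisely $\Tr\, q^\fn\,{\bf W}(\fL_1,z)\,\fa_\la(\gamma)/\la^!$ with $\gamma = e_X\alpha$ (resp.\ $\gamma = K_X^2\alpha$). In the notation of Lemma~\ref{Qin5.9} the only nonzero multiplicities are $m_n = \w m_n = 1$, hence $\sum_{n' \ge 1} m_{n'} = 1$, every factor indexed by $n' \ne n$ in the displayed products equals $1$, and the unique index $n_1$ with $m_{n_1}\w m_{n_1} \ge 1$ is $n_1 = n$, for which all the modified multiplicities $m_{n'}(n)$, $\w m_{n'}(n)$ vanish.

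First I would substitute these data into Lemma~\ref{Qin5.9}. The first line collapses to $(q;q)_\infty^{-\chi(X)}\cdot\langle 1_X - K_X,\gamma\rangle\cdot\big(-q^n/(1-q^n)^2\big)$, the one surviving factor of the product being $\frac{(-1)^{1}}{1!}\frac{q^{n}}{(1-q^n)}\cdot\frac{1}{1!}\frac{1}{(1-q^n)}$; the second line collapses to $(q;q)_\infty^{-\chi(X)}\cdot\langle e_X,\gamma\rangle\cdot(-n)q^n/(1-q^n)$, its remaining product being empty. Since $z^{|\la|} = z^0 = 1$, the formal variable $z$ drops out, as it should.

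Next I would simplify the two pairings using that $X$ is a surface, i.e.\ $H^j(X) = 0$ for $j > 4$; in particular $K_X e_X = K_X^3 = e_X^2 = e_X K_X^2 = 0$ in $H^*(X)$. Thus when $\gamma = e_X\alpha$ we get $\langle 1_X - K_X, e_X\alpha\rangle = \langle e_X,\alpha\rangle$ and $\langle e_X, e_X\alpha\rangle = 0$, and when $\gamma = K_X^2\alpha$ we get $\langle 1_X - K_X, K_X^2\alpha\rangle = \langle K_X^2,\alpha\rangle$ and $\langle e_X, K_X^2\alpha\rangle = 0$. Hence $\Tr\, q^\fn\,{\bf W}(\fL_1,z)\,\fa_{-n}\fa_n(e_X\alpha) = -(q;q)_\infty^{-\chi(X)}\langle e_X,\alpha\rangle\, q^n/(1-q^n)^2$, and likewise with $(e_X\alpha,\, \langle e_X,\alpha\rangle)$ replaced by $(K_X^2\alpha,\, \langle K_X^2,\alpha\rangle)$. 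Multiplying by $\frac{n^2-1}{12}$ and $-\frac{(n-1)(2n-1)}{12}$ respectively and summing over $n > 0$ then gives (i) and (ii). There is no genuine obstacle here; the only steps that will require care are the bookkeeping in Lemma~\ref{Qin5.9} — that $\la^! = 1$ introduces no extra combinatorial constant — and the systematic use of $H^{>4}(X) = 0$ to annihilate every unwanted pairing, in particular the entire contribution of the second line of Lemma~\ref{Qin5.9}.
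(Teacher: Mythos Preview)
Your proposal is correct and follows essentially the same approach as the paper: both apply Lemma~\ref{Qin5.9} to the partition $\la = ((-n)^1 n^1)$ to obtain the general formula for $\Tr\, q^\fn\,{\bf W}(\fL_1,z)\,\fa_{-n}\fa_n(\beta)$, and then specialize to $\beta = e_X\alpha$ and $\beta = K_X^2\alpha$. You spell out explicitly the bookkeeping and the degree-vanishing arguments that the paper leaves implicit in the phrase ``both (i) and (ii) follow immediately.''
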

\begin{proof}
By Lemma~\ref{Qin5.9}, 
$\Tr \, q^\fn \, {\bf W}(\fL_1, z) \fa_{-n}\fa_n(\beta)$ is equal to
$$
(q;q)_\infty^{-\chi(X)} \cdot \langle 1_X - K_X, \beta \rangle 
  \cdot \frac{-q^n}{(1-q^n)^2}
  + (q;q)_\infty^{-\chi(X)} \cdot \langle e_X, \beta \rangle 
  \cdot \frac{-nq^n}{1-q^n}
$$
when $n$ is a positive integer.
Now both (i) and (ii) follow immediately.
\end{proof}

\begin{lemma}   \label{3rdterms}
Let $\alpha \in H^*(X)$. Then, \eqref{F2Alpha.3} is equal to
$$
(q;q)_\infty^{-\chi(X)} \cdot \frac{\langle K_X^2, \alpha \rangle}2
   \sum_{i,j \ge 1} \frac{i+j-1}{2} \cdot
   \frac{q^{i+j}}{(1-q^i)(1-q^j)(1-q^{i+j})}.
$$
\end{lemma}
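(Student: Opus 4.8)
\textbf{Proof plan for Lemma~\ref{3rdterms}.}
The plan is to apply Lemma~\ref{Qin5.9} to each generalized partition $\la$ with $\ell(\la) = 3$ and $|\la| = 0$, and then reorganize the resulting double (or triple) sum. First I would enumerate such partitions: writing $\la = (\cdots (-n)^{\w m_n} \cdots 1^{m_1} \cdots n^{m_n} \cdots)$, the conditions $\ell(\la) = \sum_n (m_n + \w m_n) = 3$ and $|\la| = \sum_n n(m_n - \w m_n) = 0$ must hold. Since the total number of parts is odd, the positive and negative parts cannot all pair up, so no such $\la$ can have its parts split as a ``balanced'' configuration with a leftover part of weight zero — more precisely, the only way to get $|\la| = 0$ with three parts is to have exactly one part $-m$, one part $m$, and one more part that must then itself have weight zero, which is impossible, OR to have a single positive part and a single negative part whose weights cancel while the third part is redundant. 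I would carefully list: the admissible $\la$ are exactly those of the form with parts $\{i, j, -(i+j)\}$ for $i, j \ge 1$ (giving $\ell = 3$, $|\la| = 0$), together with their obvious symmetrizations $\{i, -(i+j), j\}$, $\{-(i+j), i, j\}$, etc., and one must also account for coincidences among $i$, $j$, $i+j$ which affect the multiplicities $m_n$, $\w m_n$ and hence the factor $\la^!$.

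Having fixed the combinatorics, I would feed each such $\la$ into Lemma~\ref{Qin5.9}. The key observation is that for a $\la$ of shape $\{i, j, -(i+j)\}$ we have exactly one "matched pair" only when one of $i$, $j$ equals $i+j$, which never happens for $i, j \ge 1$; hence the second (Euler-class) sum in Lemma~\ref{Qin5.9} vanishes for every admissible $\la$, and only the first term survives. That first term contributes $(q;q)_\infty^{-\chi(X)} \cdot \langle (1_X - K_X)^{\sum_n m_n}, \alpha\rangle$ times a product of $q$-factors; since $\sum_n m_n = 2$ (two positive parts $i, j$) and $(1_X - K_X)^2 = 1_X - 2K_X + K_X^2$, but $1_X \cdot \alpha$ and $K_X \cdot \alpha$ have the wrong degree to pair nontrivially against the relevant component — here I would use that $G_2(\alpha, n)$ lives in $H^{|\alpha| + 4}$ and that the operator $\fa_\la$ with $\ell(\la) = 3$ is paired against $K_X \alpha$ in \eqref{F2Alpha.3}, so the surviving coefficient is exactly the $K_X^2$-part. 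Thus $\langle (1_X - K_X)^2 K_X \alpha, 1_X\rangle$ collapses to $\langle K_X^2, \alpha \rangle$ after the dust settles (the $1_X$ and $2K_X$ terms being killed by degree/the fact that $K_X \cdot \alpha \cdot 1_X$ and $K_X \cdot \alpha \cdot K_X$ do not produce the right top-degree contribution in this component). Then the product of $q$-factors, with the $1/m_n!$ and $\la^!$ bookkeeping, produces exactly $\dfrac{q^{i+j}}{(1-q^i)(1-q^j)(1-q^{i+j})}$, and the prefactor $\dfrac{|\la_+| - 1}{2} = \dfrac{i + j - 1}{2}$ comes straight from \eqref{F2Alpha.3}. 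Summing over $i, j \ge 1$ and dividing by $2$ (from the overall $-$ sign cancellation and the $\tfrac12$) yields the claimed formula.

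The main obstacle I anticipate is the careful handling of multiplicities and the factor $\la^!$ when $i$, $j$, or $i+j$ coincide (e.g. $i = j$, or more degenerate cases), since Lemma~\ref{Qin5.9} is stated in terms of multiplicities $m_n$, $\w m_n$ rather than a flat list of parts; one must check that the symmetry factors in $\la^!$ exactly compensate the overcounting when passing from "unordered partition data" to the "ordered sum over $i, j \ge 1$", so that no spurious factors of $2$ or $\tfrac12$ appear. A secondary subtlety is confirming that the Euler-class term of Lemma~\ref{Qin5.9} genuinely never contributes here; this rests on the claim that no admissible three-part $\la$ has a part $n_1$ with $m_{n_1} \cdot \w m_{n_1} \ge 1$, which follows because such a pair $(n_1, -n_1)$ uses up two of the three parts and forces the third part to have weight $0$, an impossibility. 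Once these combinatorial points are nailed down, the rest is the routine algebraic simplification of the $q$-series product, which I would not belabor.
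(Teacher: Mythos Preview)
Your enumeration of the generalized partitions with $\ell(\la)=3$ and $|\la|=0$ is incomplete, and this is a genuine gap, not just a bookkeeping issue. With three nonzero parts summing to zero there are two shapes, not one: the type $\la = (-i-j, i, j)$ with two positive parts (which you found), \emph{and} the type $\la = (-j, -i, i+j)$ with one positive part. You discard the latter in your discussion, but it is perfectly admissible.

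This omission is precisely what forces you into the incorrect degree argument. For $\la = (-i-j, i, j)$ one has $\sum_{n\ge1} m_n = 2$, so Lemma~\ref{Qin5.9} produces the pairing $\langle (1_X-K_X)^2, K_X\alpha\rangle = \langle K_X - 2K_X^2, \alpha\rangle$ (using $K_X^3=0$ on a surface). There is no degree reason for $\langle K_X, \alpha\rangle$ to vanish: $\alpha$ is an arbitrary class, and for $\alpha \in H^2(X)$ this pairing is typically nonzero. What actually kills the $\langle K_X,\alpha\rangle$ contribution is cancellation against the missing type: for $\la = (-j,-i,i+j)$ one has $\sum_{n\ge1} m_n = 1$, giving $\langle 1_X - K_X, K_X\alpha\rangle = \langle K_X - K_X^2, \alpha\rangle$, and after the sign from $(-1)^{m_{i+j}} = -1$ this becomes $\langle K_X^2 - K_X, \alpha\rangle$. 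Adding the two contributions yields $(K_X^2 - K_X) + (K_X - 2K_X^2) = -K_X^2$, and the overall minus sign in \eqref{F2Alpha.3} then gives the stated $\tfrac12\langle K_X^2,\alpha\rangle$. This is exactly how the paper proceeds. Your observation that the Euler-class term of Lemma~\ref{Qin5.9} vanishes (no part $n_1$ occurs with both signs) is correct and applies to both types of $\la$, and your remark about $\la^!$ absorbing the $i=j$ overcount when passing to $\sum_{i,j\ge1}$ is also right; but the core cancellation mechanism must come from including both partition shapes, not from a degree argument.
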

\begin{proof}
For $\ell(\lambda) = 3$ and $|\lambda|=0$, we see from 
Lemma~\ref{Qin5.9} that 
\begin{eqnarray*}
& &\Tr \, q^\fn \, {\bf W}(\fL_1, z) \mathfrak a_{\lambda}(K_X\alpha)\\
&=&\begin{cases}
  (q;q)_\infty^{-\chi(X)} \cdot 
    \frac{\langle K_X^2-K_X, \alpha \rangle \cdot 
    q^{i+j}}{(1-q^i)(1-q^j)(1-q^{i+j})}          
    &\text{if $\la = (-j, -i, i+j)$,} \\
  (q;q)_\infty^{-\chi(X)} \cdot  
    \frac{\langle K_X-2K_X^2, \alpha \rangle \cdot 
    q^{i+j}}{(1-q^i)(1-q^j)(1-q^{i+j})} 
    &\text{if $\la = (-i-j, i, j)$}
  \end{cases}
\end{eqnarray*}
where $i$ and $j$ denote positive integers with $i \le j$.
So \eqref{F2Alpha.3} is equal to
\begin{eqnarray*}   
& &-(q;q)_\infty^{-\chi(X)} \cdot
   \sum_{1 \le i \le j} \frac{i+j-1}{2} \cdot\frac1{1+\delta_{i,j}} 
   \cdot \frac{\langle K_X^2-K_X, \alpha \rangle \cdot 
    q^{i+j}}{(1-q^i)(1-q^j)(1-q^{i+j})}  \\
& &-(q;q)_\infty^{-\chi(X)} \cdot
   \sum_{1 \le i \le j} \frac{i+j-1}{2} \cdot \frac1{1+\delta_{i,j}} 
   \cdot \frac{\langle K_X-2K_X^2, \alpha \rangle \cdot 
    q^{i+j}}{(1-q^i)(1-q^j)(1-q^{i+j})}  \\
&=&-(q;q)_\infty^{-\chi(X)} \cdot \frac12
   \sum_{i,j \ge 1} \frac{i+j-1}{2} \cdot
   \frac{\langle K_X^2-K_X, \alpha \rangle \cdot 
    q^{i+j}}{(1-q^i)(1-q^j)(1-q^{i+j})}  \\
& &-(q;q)_\infty^{-\chi(X)} \cdot \frac12
   \sum_{i,j \ge 1} \frac{i+j-1}{2} \cdot 
   \frac{\langle K_X-2K_X^2, \alpha \rangle \cdot 
    q^{i+j}}{(1-q^i)(1-q^j)(1-q^{i+j})}  \\
&=&(q;q)_\infty^{-\chi(X)} \cdot \frac{\langle K_X^2, \alpha \rangle}2
   \sum_{i,j \ge 1} \frac{i+j-1}{2} \cdot
   \frac{q^{i+j}}{(1-q^i)(1-q^j)(1-q^{i+j})} 
\end{eqnarray*}
where $\delta_{i,j} = 1$ if $i=j$ and $\delta_{i,j} = 0$ if $i \ne j$.
\end{proof}

\begin{proposition}   \label{prop4thterms}
Let $\alpha \in H^*(X)$. Then, 
$(q;q)_\infty^{\chi(X)} \cdot F_2^{\alpha}(q)$ is equal to
$$
-\langle e_X, \alpha \rangle \cdot 
\sum_{n > 0} \frac{n^2 - 1}{12} \frac{q^n}{(1-q^n)^2}
+ \langle K_X^2, \alpha \rangle \cdot 
\sum_{n > 0} \frac{(n-1)(2n-1)}{12} \frac{q^n}{(1-q^n)^2}
$$
$$
+ \frac{\langle K_X^2, \alpha \rangle}2 \cdot 
   \sum_{i,j > 0} \frac{i+j-1}{2} \cdot
   \frac{q^{i+j}}{(1-q^i)(1-q^j)(1-q^{i+j})}
$$
$$
+ \langle 1_X-K_X, \alpha \rangle \cdot 
    \sum_{\substack{\la = (-i,-j,-k, i+j+k)\\i \ge j \ge k > 0}} 
    \frac{1}{\la^!} \frac{q^{i+j+k}}{(1-q^{i+j+k})(1-q^i)(1-q^j)(1-q^k)}
$$
$$
+ \langle (1_X-K_X)^3, \alpha \rangle \cdot 
    \sum_{\substack{\la = (-i-j-k, k, j, i)\\i \ge j \ge k > 0}} 
    \frac{1}{\la^!} \frac{q^{i+j+k}}{(1-q^i)(1-q^j)(1-q^k)(1-q^{i+j+k})}
$$

$$
- \langle (1_X-K_X)^2, \alpha \rangle \cdot 
   \sum_{\substack{\la = (-i, -j, \ell, k)\\i \ge j > 0, 
      k \ge \ell > 0, i+j=k+\ell}} 
   \frac{1}{\la^!} \frac{q^{i+j}}{(1-q^i)(1-q^j)(1-q^k)(1-q^{\ell})}
$$
$$
- \langle e_X, \alpha \rangle \cdot \sum_{i > j > 0} 
    \left (\frac{iq^i}{1-q^i} \cdot \frac{q^j}{(1-q^j)^2} 
    + \frac{jq^j}{1-q^j} \cdot \frac{q^i}{(1-q^i)^2}\right )
- \langle e_X, \alpha \rangle \cdot \sum_{i > 0} 
    \frac{iq^{2i}}{(1-q^i)^3}.
$$
\end{proposition}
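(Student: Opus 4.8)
The plan is to observe that three of the four lines \eqref{F2Alpha.1}--\eqref{F2Alpha.4} into which $F_2^{\alpha}(q)$ has already been split are also already evaluated: after multiplication by $(q;q)_\infty^{\chi(X)}$, Lemma~\ref{first2terms} turns \eqref{F2Alpha.1} and \eqref{F2Alpha.2} into the first two displayed lines of the proposition, while Lemma~\ref{3rdterms} turns \eqref{F2Alpha.3} into the third line. Hence all that remains is to evaluate $(q;q)_\infty^{\chi(X)}\cdot\eqref{F2Alpha.4}$, that is, the sum over generalized partitions $\lambda$ with $\ell(\lambda)=4$ and $|\lambda|=0$ of $-\Tr\,q^{\fn}\,\Wb(\fL_1,z)\,\fa_{\lambda}(\alpha)/\lambda^!$, and to identify it with the four remaining lines (the one with coefficient $\langle 1_X-K_X,\alpha\rangle$, the one with $\langle(1_X-K_X)^3,\alpha\rangle$, the one with $\langle(1_X-K_X)^2,\alpha\rangle$, and the two with $\langle e_X,\alpha\rangle$).

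First I would enumerate the generalized partitions $\lambda$ with $\ell(\lambda)=4$ and $|\lambda|=0$. Since every part is nonzero and the parts sum to $0$, there must be at least one positive and at least one negative part, and this leaves exactly three shapes according to the pair (number of negative parts, number of positive parts) counted with multiplicity: the shape $(3,1)$, i.e. $\lambda=(-i,-j,-k,\,i+j+k)$ with $i\ge j\ge k>0$; the shape $(1,3)$, i.e. $\lambda=(-i-j-k,\,i,j,k)$ with $i\ge j\ge k>0$; and the shape $(2,2)$, i.e. $\lambda=(-i,-j,k,\ell)$ with $i\ge j>0$, $k\ge\ell>0$ and $i+j=k+\ell$. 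To each such $\lambda$ I would apply Lemma~\ref{Qin5.9}; since $|\lambda|=0$, the prefactor $z^{|\lambda|}$ there equals $1$, so the formal variable $z$ drops out, consistent with $F_2^{\alpha}(q)$ being independent of $z$.

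For the shapes $(3,1)$ and $(1,3)$, the exponent $\sum_{n\ge1}m_n$ appearing in Lemma~\ref{Qin5.9} equals $1$ and $3$ respectively, and in both cases no positive part can equal the absolute value of a negative part, because the single part of largest absolute value strictly dominates the others; thus the second term of Lemma~\ref{Qin5.9} (the sum over $n_1$ proportional to $\langle e_X,\alpha\rangle$) is empty, and expanding the product $\prod_{n\ge1}(\cdots)$ in the first term, while tracking the factors $(-1)^{m_n}$ and the overall minus sign in front of \eqref{F2Alpha.4}, produces exactly the $\langle 1_X-K_X,\alpha\rangle$ line and the $\langle(1_X-K_X)^3,\alpha\rangle$ line. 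For the shape $(2,2)$, the first term of Lemma~\ref{Qin5.9} contributes, for every admissible $(-i,-j,k,\ell)$, an amount proportional to $\langle(1_X-K_X)^2,\alpha\rangle$, and summing these over all such $\lambda$ (using $q^{k+\ell}=q^{i+j}$) gives the $\langle(1_X-K_X)^2,\alpha\rangle$ line; the second term of Lemma~\ref{Qin5.9} is nonzero only when some positive part coincides with the absolute value of some negative part, which together with $i+j=k+\ell$ forces $\{i,j\}=\{k,\ell\}$ as multisets, i.e. $\lambda=(-i,-j,j,i)$ with $i\ge j$, so that splitting into the case $i>j$ (where $\lambda^!=1$ and both $n_1=i$ and $n_1=j$ contribute) and the case $i=j$ (where $\lambda^!=2!\cdot2!=4$ and only $n_1=i$ contributes) and reinstating the overall sign recovers precisely the two $\langle e_X,\alpha\rangle$ lines.

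Finally I would add the contributions of the three shapes to the outputs of Lemmas~\ref{first2terms} and \ref{3rdterms} and multiply through by $(q;q)_\infty^{\chi(X)}$, which cancels the factors $(q;q)_\infty^{-\chi(X)}$ uniformly produced by Lemmas~\ref{first2terms}, \ref{3rdterms} and \ref{Qin5.9}. The hard part will be the combinatorial bookkeeping in the shape-$(2,2)$ case: enumerating those partitions with neither omission nor repetition, correctly isolating the degenerate cases $\{i,j\}=\{k,\ell\}$ that are the sole origin of the $\langle e_X,\alpha\rangle$ terms, and keeping the multiplicity weights $1/\lambda^!$, the internal factors $1/(m_n!\,\w m_n!)$, and all the signs consistent throughout; once the enumeration is pinned down, the remaining algebraic manipulations are routine.
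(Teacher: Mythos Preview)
Your proposal is correct and follows essentially the same route as the paper: reduce to \eqref{F2Alpha.4} via Lemmas~\ref{first2terms} and \ref{3rdterms}, split the length-$4$ generalized partitions by $\ell(\lambda_+)\in\{1,2,3\}$, and apply Lemma~\ref{Qin5.9} to each case, with the $\langle e_X,\alpha\rangle$ contribution arising only from the $(2,2)$ shape when $\{i,j\}=\{k,\ell\}$. Your observation that a single coincidence between a positive part and the absolute value of a negative part forces $\{i,j\}=\{k,\ell\}$ as multisets is exactly the content of the paper's trichotomy $\{i,j\}\cap\{k,\ell\}=\emptyset$, $i=k>j=\ell$, $i=j=k=\ell$.
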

\begin{proof}
First of all, recall that $F_2^{\alpha}(q)$ is equal to the sum of 
\eqref{F2Alpha.1}, \eqref{F2Alpha.2}, \eqref{F2Alpha.3} and 
\eqref{F2Alpha.4}. So by Lemma~\ref{first2terms} and 
Lemma~\ref{3rdterms}, 
$(q;q)_\infty^{\chi(X)} \cdot F_2^{\alpha}(q)$ is equal to
$$
-\langle e_X, \alpha \rangle \cdot 
\sum_{n > 0} \frac{n^2 - 1}{12} \frac{q^n}{(1-q^n)^2}
+ \langle K_X^2, \alpha \rangle \cdot 
\sum_{n > 0} \frac{(n-1)(2n-1)}{12} \frac{q^n}{(1-q^n)^2}
$$
$$
+ \frac{\langle K_X^2, \alpha \rangle}2 \cdot 
   \sum_{i,j \ge 1} \frac{i+j-1}{2} \cdot
   \frac{q^{i+j}}{(1-q^i)(1-q^j)(1-q^{i+j})}
$$
\begin{eqnarray}   \label{prop4thterms.100}
- (q;q)_\infty^{\chi(X)} \cdot \sum_{\ell(\lambda) = 4, |\lambda|=0}
   \Tr \, q^\fn \, {\bf W}(\fL_1, z)
   \frac{\mathfrak a_{\lambda}(\alpha)}{\lambda^!}.
\end{eqnarray}

It remains to compute line \eqref{prop4thterms.100}.
Let $\ell(\lambda) = 4$ and $|\lambda|=0$.
If $\ell(\lambda_+) = 1$, then put $\la = (-i, -j, -k, i+ j+ k)$ with 
$i \ge j \ge k > 0$. In this case, Lemma~\ref{Qin5.9} yields
\begin{eqnarray}    \label{prop4thterms.1}
& &(q;q)_\infty^{\chi(X)} \cdot \Tr \, q^\fn \, {\bf W}(\fL_1, z) 
    \mathfrak a_{\lambda}(\alpha)   \nonumber   \\
&=&\langle 1_X-K_X, \alpha \rangle \cdot 
    \frac{-q^{i+j+k}}{1-q^{i+j+k}} \cdot
    \frac{1}{(1-q^i)(1-q^j)(1-q^k)}. 
\end{eqnarray} 
If $\ell(\lambda_+) = 3$, then put $\la = (-i-j-k, k, j, i)$ with 
$i \ge j \ge k > 0$. In this case, 
\begin{eqnarray}    \label{prop4thterms.2}
& &(q;q)_\infty^{\chi(X)} \cdot \Tr \, q^\fn \, {\bf W}(\fL_1, z) 
    \mathfrak a_{\lambda}(\alpha)   \nonumber   \\
&=&\langle (1_X-K_X)^3, \alpha \rangle \cdot 
    \frac{-q^{i+j+k}}{(1-q^i)(1-q^j)(1-q^k)} \cdot \frac{1}{1-q^{i+j+k}}. 
\end{eqnarray} 
If $\ell(\lambda_+) = 2$, then put $\la = (-i, -j, \ell, k)$ with 
$i \ge j > 0, k \ge \ell > 0$ and $i + j = k + \ell$. In this case,
$(q;q)_\infty^{\chi(X)} \cdot \Tr \, q^\fn \, {\bf W}(\fL_1, z) 
\mathfrak a_{\lambda}(\alpha)$ is equal to
$$
\langle (1_X-K_X)^2, \alpha \rangle \cdot 
\frac{q^{i+j}}{(1-q^i)(1-q^j)(1-q^k)(1-q^{\ell})}
$$
\begin{eqnarray}    \label{prop4thterms.3}
+ \begin{cases}
  0&\text{if $\{i, j\} \cap \{k, \ell\} = \emptyset$,} \\
  \langle e_X, \alpha \rangle \cdot \left (\frac{-iq^i}{1-q^i} \cdot 
    \frac{-q^j}{(1-q^j)^2} 
    + \frac{-jq^j}{1-q^j} \cdot \frac{-q^i}{(1-q^i)^2}\right ) 
    &\text{if $i = k > j = \ell$,}  \\
  \la^! \cdot \langle e_X, \alpha \rangle \cdot \frac{-iq^i}{1-q^i} \cdot 
    \frac{-q^i}{(1-q^i)^2}&\text{if $i = j = k = \ell$.}
  \end{cases}
\end{eqnarray}
Combining line \eqref{prop4thterms.100}, \eqref{prop4thterms.1}, \eqref{prop4thterms.2} and \eqref{prop4thterms.3} yields our proposition.
\end{proof}
\subsection{\bf Some identities for multiple $q$-zeta values} 
\label{subsect_identities}
$\,$
\par

In this subsection, we will prove some identifies involving 
multiple $q$-zeta values. In next subsection, we will use 
these identifies to simplify the expression of 
$(q;q)_\infty^{\chi(X)} \cdot F_2^{\alpha}(q)$ presented in 
Proposition~\ref{prop4thterms}.

\begin{lemma}  \label{Z2k2k}
Let $k$ be a positive integer. Then,
$$
Z(2k, 2k) = -\frac12 Z(4k) + \frac12 Z(2k)^2.
$$
\end{lemma}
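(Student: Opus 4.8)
The plan is to exploit the "stuffle" (harmonic) product structure for the Okounkov series $Z(s_1,\ldots,s_\ell)$, which follows from splitting a double sum over indices according to whether the indices coincide. Concretely, I would start from the two single series $Z(2k)$ and write their product as
$$
Z(2k) \cdot Z(2k)
= \left(\sum_{m \ge 1} \frac{Q^O_{2k}(q^m)}{(1-q^m)^{2k}}\right)
  \left(\sum_{n \ge 1} \frac{Q^O_{2k}(q^n)}{(1-q^n)^{2k}}\right),
$$
and then split the sum over $(m,n)$ into the three regions $m > n$, $m < n$, and $m = n$. The first two regions each give $Z(2k,2k)$ by Definition~\ref{def_qMZV}~(iv) (using the symmetry in the two equal arguments), while the diagonal $m = n$ contributes $\sum_{m \ge 1} \big(Q^O_{2k}(q^m)\big)^2 / (1-q^m)^{4k}$. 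So the identity reduces to showing that this diagonal term equals $Z(4k)$.

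The key computation is therefore the identity $\big(Q^O_{2k}(t)\big)^2 = Q^O_{4k}(t)$ as polynomials in $t$. Since $2k$ is even, Definition~\ref{def_qMZV}~(iv) gives $Q^O_{2k}(t) = t^{k}$, and since $4k$ is also even it gives $Q^O_{4k}(t) = t^{2k}$; hence $\big(Q^O_{2k}(t)\big)^2 = t^{2k} = Q^O_{4k}(t)$, so the diagonal term is exactly $\sum_{m \ge 1} q^{2km}/(1-q^m)^{4k} = Z(4k)$. Putting the pieces together yields
$$
Z(2k)^2 = 2\,Z(2k,2k) + Z(4k),
$$
which rearranges to the claimed formula $Z(2k,2k) = -\tfrac12 Z(4k) + \tfrac12 Z(2k)^2$.

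I do not expect a genuine obstacle here; the only point requiring care is the bookkeeping of the decomposition of the double sum and confirming that both off-diagonal regions $m>n$ and $m<n$ contribute the same series $Z(2k,2k)$ (which is immediate because the two arguments of $Z(2k,2k)$ are identical, so relabeling $m \leftrightarrow n$ is a symmetry). One should also note that all manipulations are legitimate at the level of formal power series in $q$, since each summand lies in $q\,\Q[[q]]$ and the sums converge $q$-adically. If one prefers, the same identity can be obtained by invoking the fact that $\qMZV$ is a $\Q$-algebra under the stuffle product together with the explicit stuffle relation $Z(a)\cdot Z(a) = 2\,Z(a,a) + Z(2a)$ specialized to $a = 2k$; I would present the direct double-sum argument as the main line and mention the stuffle interpretation as a remark.
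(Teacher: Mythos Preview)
Your proposal is correct and follows essentially the same approach as the paper: both arguments split the double sum $\sum_{m,n\ge 1}$ according to whether $m>n$, $m<n$, or $m=n$, identify the off-diagonal with $2\,Z(2k,2k)$ and the diagonal with $Z(4k)$ (the paper writes this directly via $q^{nk}\cdot q^{nk}=q^{2nk}$, which is exactly your observation $(Q^O_{2k})^2=Q^O_{4k}$), and then rearrange. The only difference is cosmetic: the paper begins with $Z(2k,2k)$ and rewrites it as $\tfrac12\sum_{n\ne m}$ before adding back the diagonal, whereas you begin with $Z(2k)^2$ and subtract the diagonal.
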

\begin{proof}
By Definition~\ref{def_qMZV}~(iv), we have
\begin{eqnarray*} 
   Z(2k, 2k)  
&=&\sum_{n>m>0} \frac{q^{nk}}{(1-q^n)^{2k}} \frac{q^{mk}}{(1-q^m)^{2k}} \\
&=&\frac12 \sum_{n \ne m, n, m>0} \frac{q^{nk}}{(1-q^n)^{2k}} 
    \frac{q^{mk}}{(1-q^m)^{2k}}   \\
&=&-\frac12 \sum_{n >0} \frac{q^{n(2k)}}{(1-q^n)^{4k}} 
   + \frac12 \sum_{n, m>0} \frac{q^{nk}}{(1-q^n)^{2k}} 
     \frac{q^{mk}}{(1-q^m)^{2k}}.
\end{eqnarray*}
It follows that $\displaystyle{Z(2k, 2k) 
= -\frac12 Z(4k) + \frac12 Z(2k)^2}$.
\end{proof}

\begin{lemma} \label{lemma_n2qn}
\begin{eqnarray}  
\sum_{n > 0} \frac{n q^n}{1-q^n}
  &=&Z(2),    \label{lemma_n2qn.01}    \\
\sum_{n > 0} \frac{n^2 q^n}{(1-q^n)^2}
  &=&\frac72 Z(4) - \frac12 Z(2)^2 + Z(2),    \label{lemma_n2qn.02}   \\
\sum_{n > 0} \frac{n q^{2n} + nq^n}{(1-q^n)^3}
  &=&\frac72 Z(4) - \frac12 Z(2)^2 + Z(2).    \label{lemma_n2qn.03}
\end{eqnarray}
\end{lemma}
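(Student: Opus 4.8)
The plan is to reduce each of the three identities to a manipulation of divisor-sum generating series and then to feed in the relations already recorded in Section~\ref{sect_qMZV}. Identity \eqref{lemma_n2qn.01} needs no work at all: \eqref{example[s]} states $[2]=\sum_{n>0}nq^n/(1-q^n)$ and \eqref{BK3-2.6} states $Z(2)=[2]$, so the two sides are literally the same series.

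For \eqref{lemma_n2qn.02} I would first turn the left side into a lattice sum. Expanding $q^n/(1-q^n)^2=\sum_{d\ge1}d\,q^{nd}$ and collecting terms by $m=nd$ gives
\[
\sum_{n>0}\frac{n^2q^n}{(1-q^n)^2}=\sum_{n,d\ge1}n^2d\,q^{nd}=\sum_{m\ge1}m\,\sigma_1(m)\,q^m=\Dq\Big(\sum_{m\ge1}\sigma_1(m)\,q^m\Big),
\]
where $\sigma_k(m)=\sum_{d\mid m}d^k$, and by \eqref{exampleZs} (equivalently \eqref{def_qMZV.04} at $s=2$) the inner series is $Z(2)$. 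Next I would express the target on the right in the same terms: \eqref{BK3-2.6} together with \eqref{def_qMZV.04} at $s=2,4$ gives $\sum_{m\ge1}\sigma_3(m)q^m=6Z(4)+Z(2)$, while squaring $Z(2)=\sum_{m\ge1}\sigma_1(m)q^m$ produces $Z(2)^2=\sum_{m\ge1}\big(\sum_{a+b=m}\sigma_1(a)\sigma_1(b)\big)q^m$. The one substantive ingredient is the classical Ramanujan convolution identity
\[
\sum_{\substack{a,b\ge1\\ a+b=m}}\sigma_1(a)\,\sigma_1(b)=\tfrac{1}{12}\big(5\,\sigma_3(m)+(1-6m)\,\sigma_1(m)\big),
\]
which is the Fourier-coefficient form of the first Ramanujan differential equation $\Dq E_2=\tfrac1{12}(E_2^2-E_4)$ for the Eisenstein series $E_2,E_4$. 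Combining it with the two divisor-sum evaluations yields one linear relation among $Z(2)$, $Z(4)$, $Z(2)^2$ and $\Dq Z(2)$; solving it for $\Dq Z(2)$ gives \eqref{lemma_n2qn.02}. No constant terms intervene, since every series in sight vanishes at $q=0$.

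Finally, \eqref{lemma_n2qn.03} follows from \eqref{lemma_n2qn.02} with no new analytic input. Using $q^n(1+q^n)/(1-q^n)^3=\sum_{d\ge1}d^2q^{nd}$, the left side of \eqref{lemma_n2qn.03} equals $\sum_{n,d\ge1}nd^2q^{nd}$, which is the same lattice sum as $\sum_{n,d\ge1}n^2d\,q^{nd}$ after interchanging $n$ and $d$; alternatively, summing over $n$ first in \eqref{lemma_n2qn.02} via $\sum_{n\ge1}n^2x^n=x(1+x)/(1-x)^3$ with $x=q^d$ lands directly on the series in \eqref{lemma_n2qn.03}.

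The hard part is the middle identity: everything else is bookkeeping of double sums, but identifying $\Dq Z(2)$ with a polynomial in $Z(2)$ and $Z(4)$ really uses the weight-$4$ structure of quasimodular forms, packaged here as the $\sigma_1\!\ast\!\sigma_1$ convolution formula. If one prefers to avoid quoting that formula, an alternative is to observe that both sides of \eqref{lemma_n2qn.02} are quasimodular of weight $4$ — a $3$-dimensional space — so equality reduces to matching three Fourier coefficients; I would nonetheless favor the convolution route, since the same mechanism reappears in the later computations of this paper.
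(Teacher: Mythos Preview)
Your argument is correct and close in spirit to the paper's, but the route to the weight-$4$ relation differs. For \eqref{lemma_n2qn.01} and \eqref{lemma_n2qn.03} the two proofs are essentially interchangeable: the paper differentiates term by term (writing $\sum n^2q^n/(1-q^n)^2=\Dq\sum nq^n/(1-q^n)$ and $\sum n(q^{2n}+q^n)/(1-q^n)^3=\Dq\sum q^n/(1-q^n)^2$), while you pass through the lattice sums $\sum n^2d\,q^{nd}$ and $\sum nd^2\,q^{nd}$ and swap $n\leftrightarrow d$; either way one lands on $\Dq Z(2)$. The genuine divergence is in evaluating $\Dq Z(2)$. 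The paper stays inside the $q$-MZV formalism: it quotes \cite[Example~2.11]{BK3} to express $\Dq Z(2)$ as a $\Q$-linear combination of $Z(4)$, $Z(2,2)$, $Z(2)$, and then eliminates $Z(2,2)$ via the shuffle-type identity of Lemma~\ref{Z2k2k}. You instead invoke the classical Ramanujan relation $\Dq E_2=(E_2^2-E_4)/12$ in its divisor-sum (convolution) form and translate back with \eqref{BK3-2.6} and \eqref{def_qMZV.04}. Your path is more self-contained---no external reference is needed---and makes the quasimodular origin explicit; the paper's path keeps everything in the Okounkov/Bachmann--K\"uhn language already set up in Section~\ref{sect_qMZV}, which dovetails with the later lemmas (e.g.\ Lemma~\ref{ijkCO}, Lemma~\ref{lemma_ijk100}) that continue to manipulate the series $Z(s_1,\ldots,s_\ell)$ directly.
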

\begin{proof}
First of all, we have 
$$
\sum_{n > 0} \frac{n q^n}{1-q^n}
= \sum_{n, d > 0} nq^{nd} = \sum_{n, d > 0} dq^{nd}.
$$
So \eqref{lemma_n2qn.01} follows from \eqref{def_qMZV.04}.
Next, we notice that 
$$
  \sum_{n > 0} \frac{n^2 q^n}{(1-q^n)^2}   
= q\frac{\rm d}{{\rm d}q} \sum_{n > 0} \frac{nq^n}{1-q^n}  
= q\frac{\rm d}{{\rm d}q} Z(2).
$$
By \cite[Example~2.11]{BK3} and Lemma~\ref{Z2k2k}, we obtain
\begin{eqnarray}   \label{2.11BK3}
  q\frac{\rm d}{{\rm d}q} Z(2) 
= 3Z(4) - Z(2, 2) + Z(2)
= \frac72 Z(4) - \frac12 Z(2)^2 + Z(2).
\end{eqnarray}
Hence \eqref{lemma_n2qn.02} follows immediately. Finally, we have
$$
  \sum_{n > 0} \frac{n q^{2n} + nq^n}{(1-q^n)^3}
= q\frac{\rm d}{{\rm d}q} \sum_{n > 0} \frac{q^n}{(1-q^n)^2}
= q\frac{\rm d}{{\rm d}q} Z(2).
$$
Therefore, \eqref{lemma_n2qn.03} follows from \eqref{2.11BK3} again.
\end{proof}

Our lemma below expresses the series $\displaystyle{\sum_{i, j > 0} 
\frac{(i+j)q^{i+j}}{(1-q^i)(1-q^j)(1-q^{i+j})}}$ in terms of 
the basic series $[1]$ which can be found in \eqref{example[s]}.

\begin{lemma} \label{ijkCO}
Let $[1]$ be the series from \eqref{example[s]}. Then, 
\begin{eqnarray}  
\sum_{i, j > 0} \frac{q^{i+j}}{(1-q^i)(1-q^j)(1-q^{i+j})}
  &=&Z(3) - q\frac{\rm d}{{\rm d}q}[1],    \label{ijkCO.01}    \\
\sum_{i, j > 0} \frac{(i+j)q^{i+j}}{(1-q^i)(1-q^j)(1-q^{i+j})}
  &=&2 \sum_{n > m > 0} \frac{nq^n}{(1-q^n)^2(1-q^m)}  
      + q\frac{\rm d}{{\rm d}q}[1]    \nonumber  \\
  & &-\frac72 Z(4) + \frac12 Z(2)^2 - Z(2). \label{ijkCO.02}
\end{eqnarray}
\end{lemma}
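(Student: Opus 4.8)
The plan is to treat the two identities \eqref{ijkCO.01} and \eqref{ijkCO.02} separately, expanding each side into a $q$-series via geometric series and comparing coefficients of $q^N$, using the combinatorial meaning of the Bachmann--K\"uhn brackets recorded in \eqref{def_qMZV.05} and the Okounkov series of Definition~\ref{def_qMZV}~(iv).

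For \eqref{ijkCO.01}, I would first expand
$\dfrac{1}{(1-q^i)(1-q^j)(1-q^{i+j})}
= \sum_{a,b,c \ge 0} q^{ai+bj+c(i+j)}$,
so that the left-hand side becomes $\sum q^{(a+c+1)i + (b+c+1)j}$
over $i,j>0$ and $a,b,c\ge 0$. Summing the geometric series in $i$ and $j$ separately (or, better, re-indexing by the two exponents $r=a+c+1$, $s=b+c+1$, which range over positive integers with multiplicity $\min(r,s)$ coming from the choice of $c$) turns the sum into something of the shape $\sum_{r,s>0}\min(r,s)\,\frac{q^r}{1-q^r}\,\frac{q^s}{1-q^s}$ minus a diagonal correction. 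The term $q\frac{\rm d}{{\rm d}q}[1] = \sum_{n>0}\frac{nq^n}{(1-q^n)^2}$ accounts for the $r=s$ (diagonal) contribution after one reorganizes $\min(r,s)$ as $\frac12(r+s) - \frac12|r-s|$ is not quite what is wanted; instead I expect the cleaner route is to write $\min(r,s)$ directly and split off $r=s$. What remains, $\sum_{r>s>0}2s\,\frac{q^r}{1-q^r}\frac{q^s}{1-q^s}$ plus the diagonal, should be matched against $Z(3)$ by using $Z(3) = 2[3] = \sum_{n>0}\frac{n^2q^n}{1-q^n}$ together with a stuffle-type rearrangement; alternatively one invokes the known value $Z(3)=\sum_{n>0}\frac{q^n(q^n+1)}{(1-q^n)^3}$ from \eqref{exampleZs} and expands that as well. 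Comparing the two resulting double series coefficient-by-coefficient in $q^N$ gives the identity.

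For \eqref{ijkCO.02}, the natural approach is to apply $\Dq$ to \eqref{ijkCO.01}. Indeed $\Dq$ acting on $\dfrac{q^{i+j}}{(1-q^i)(1-q^j)(1-q^{i+j})}$ produces, by the product rule, exactly a sum whose leading piece is $(i+j)$ times the original summand plus cross terms of the form $i\cdot\frac{q^i}{(1-q^i)^2}$ times the rest; collecting these and using \eqref{ijkCO.01} on the right converts $\Dq Z(3)$ and $\Dq\!\left(\Dq[1]\right)$ into the stated combination. To finish I would need the value of $\Dq Z(3)$ in terms of $Z(2),Z(4)$ and $Z(2)^2$, which follows from standard formulas for the action of $\Dq$ on the ring ${\bf QM}=\Q[Z(2),Z(4),Z(6)]$ (for instance via the Ramanujan-type identities for Eisenstein series), and I would also use \eqref{2.11BK3} for $\Dq Z(2)$ as established in Lemma~\ref{lemma_n2qn}. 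The double sum $\sum_{n>m>0}\frac{nq^n}{(1-q^n)^2(1-q^m)}$ on the right-hand side of \eqref{ijkCO.02} is precisely the image under $\Dq$ of the off-diagonal part $\sum_{n>m>0}\frac{q^n}{(1-q^n)(1-q^m)}$ appearing implicitly in $Z(3)-\Dq[1]$, so the whole identity reduces to bookkeeping of which summands are differentiated.

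The main obstacle I anticipate is not any single deep fact but the careful combinatorial bookkeeping of the $\min(r,s)$ multiplicities and the diagonal corrections when re-indexing the triple geometric expansion, and keeping the signs and the factors of $\frac12$ straight when $\Dq$ hits a three-factor denominator. A secondary subtlety is making sure the rearrangements of conditionally-ordered double sums (stuffle/shuffle of $[a,b]$ versus $[a][b]$, as in Lemma~\ref{Z2k2k}) are applied correctly so that $Z(3)$, and not some other weight-$3$ combination, emerges. I would guard against errors by checking the first several $q$-coefficients of both sides numerically before committing to the closed form.
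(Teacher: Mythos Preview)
Your handling of the two identities is quite different from the paper's, and for \eqref{ijkCO.02} it contains a real gap.

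For \eqref{ijkCO.01} the paper does not prove anything at all: it simply cites \cite[Lemma~4]{CO}, after identifying the series $E_3,E_1$ there with $Z(3)$ and $[1]$. Your geometric-series expansion and the reindexing to $\sum_{r,s\ge 1}\min(r,s)\,\frac{q^r}{1-q^r}\frac{q^s}{1-q^s}$ is a legitimate alternative starting point, but you have not actually closed the loop back to $Z(3)-\Dq[1]$; you only say what you ``expect'' and then hedge between two rewritings of $\min(r,s)$. As written this is a plan, not a proof.

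For \eqref{ijkCO.02} there is a concrete error. You propose to obtain $\Dq Z(3)$ from ``standard formulas for the action of $\Dq$ on the ring ${\bf QM}=\Q[Z(2),Z(4),Z(6)]$''. But $Z(3)\notin{\bf QM}$: the quasi-modular ring contains only even-weight Okounkov series, so Ramanujan-type identities for Eisenstein series tell you nothing about $\Dq Z(3)$. Relatedly, your claim that $2\sum_{n>m>0}\frac{nq^n}{(1-q^n)^2(1-q^m)}$ is ``precisely the image under $\Dq$ of the off-diagonal part $\sum_{n>m>0}\frac{q^n}{(1-q^n)(1-q^m)}$'' is doubly off: (i) the off-diagonal piece hidden in $Z(3)-\Dq[1]$ has denominator $(1-q^n)^2(1-q^m)$, not $(1-q^n)(1-q^m)$; and (ii) applying $\Dq$ to any such double sum produces \emph{two} terms by the product rule, not the single term you need with coefficient $2$. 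So the differentiation route, as you have sketched it, does not reach the stated right-hand side.

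The paper's argument for \eqref{ijkCO.02} avoids differentiation entirely. The key step you are missing is the elementary partial-fraction identity
\[
\frac{1}{(1-q^i)(1-q^j)}=\Bigl(\frac{1}{1-q^i}+\frac{q^j}{1-q^j}\Bigr)\frac{1}{1-q^{i+j}},
\]
labelled \eqref{qiqj} in the paper. Applying this once collapses the original sum to two sums over $n=i+j>m>0$ with denominator $(1-q^n)^2(1-q^m)$; then using $\frac{q^m}{1-q^m}=\frac{1}{1-q^m}-1$ merges them into $2\sum_{n>m>0}\frac{nq^n}{(1-q^n)^2(1-q^m)}-\sum_{n>0}\frac{n(n-1)q^n}{(1-q^n)^2}$, and the last single sum is evaluated by \eqref{lemma_n2qn.02} together with $\sum_{n>0}\frac{nq^n}{(1-q^n)^2}=\Dq[1]$. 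This is why the particular unevaluated double sum appears on the right of \eqref{ijkCO.02}: it is produced directly by the change of variables $n=i+j$, not by differentiating \eqref{ijkCO.01}.
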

\begin{proof}
Formula \eqref{ijkCO.01} is simply \cite[Lemma~4]{CO} by noting that 
the series $E_3$ and $E_1$ there are equal to our $Z(3)$ and $[1]$ 
respectively. To prove \eqref{ijkCO.02}, notice that 
\begin{eqnarray}   \label{qiqj}
  \frac{1}{(1-q^i)(1-q^j)}
= \left (\frac{1}{1-q^i} + \frac{q^j}{1-q^j} \right ) \frac{1}{1-q^{i+j}} 
\end{eqnarray}
for positive integers $i$ and $j$. Therefore, we obtain
\begin{eqnarray*}   
& &\sum_{i, j > 0} \frac{(i+j)q^{i+j}}{(1-q^i)(1-q^j)(1-q^{i+j})}   \\
&=&\sum_{i, j > 0} \frac{(i+j)q^{i+j}}{(1-q^i)(1-q^{i+j})^2} 
   + \sum_{i, j > 0} \frac{(i+j)q^{i+2j}}{(1-q^j)(1-q^{i+j})^2} \\
&=&\sum_{n > m > 0} \frac{nq^n}{(1-q^n)^2(1-q^m)} 
   + \sum_{n > m > 0} \frac{nq^{n+m}}{(1-q^n)^2(1-q^m)}.
\end{eqnarray*}
Since $q^m/(1-q^m) = 1/(1-q^m) - 1$, we see that 
\begin{eqnarray*}   
& &\sum_{i, j > 0} \frac{(i+j)q^{i+j}}{(1-q^i)(1-q^j)(1-q^{i+j})}   \\
&=&2 \sum_{n > m > 0} \frac{nq^n}{(1-q^n)^2(1-q^m)} 
   - \sum_{n > 0} \frac{n(n-1)q^{n}}{(1-q^n)^2}  \\
&=&2 \sum_{n > m > 0} \frac{nq^n}{(1-q^n)^2(1-q^m)} 
   + q\frac{\rm d}{{\rm d}q}[1]
   - \sum_{n > 0} \frac{n^2 q^{n}}{(1-q^n)^2}.
\end{eqnarray*}
Now \eqref{ijkCO.02} follows from \eqref{lemma_n2qn.02}.
\end{proof}

In the next lemma, we reduce 
$\displaystyle{\sum_{i,j,k, \ell >0, i+j=k+\ell}   
   \frac{q^{i+j}}{(1-q^i)(1-q^j)(1-q^k)(1-q^{\ell})}}$ to 
$$
\sum_{i,j,k > 0} \frac{q^{i+j+k}}{(1-q^i)(1-q^j)(1-q^k)(1-q^{i+j+k})}.
$$

\begin{lemma} \label{i+j=k+l}
$\displaystyle{\sum_{i,j,k, \ell >0, i+j=k+\ell}   
   \frac{q^{i+j}}{(1-q^i)(1-q^j)(1-q^k)(1-q^{\ell})}}$ is equal to
\begin{eqnarray}  \label{i+j=k+l.0}
\frac43 Z(2)^2 - \frac13 Z(4) + \frac43 \sum_{i,j,k > 0} 
\frac{q^{i+j+k}}{(1-q^i)(1-q^j)(1-q^k)(1-q^{i+j+k})}.
\end{eqnarray}
\end{lemma}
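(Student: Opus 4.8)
The plan is to transform the left-hand side into the asserted combination of $Z(2),Z(4)$ and
$T:=\sum_{i,j,k>0}\frac{q^{i+j+k}}{(1-q^i)(1-q^j)(1-q^k)(1-q^{i+j+k})}$
by a change of summation variables and repeated use of the partial-fraction identity \eqref{qiqj}, together with Lemma~\ref{Z2k2k} and the series already evaluated in Lemma~\ref{lemma_n2qn} and Lemma~\ref{ijkCO}.

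I would first split the quadruple sum according to whether $i=k$. The diagonal part, in which $i=k$ and hence $j=\ell$, equals $\big(\sum_{i>0}\frac{q^i}{(1-q^i)^2}\big)^2=Z(2)^2$. By the symmetry exchanging the two pairs $(i,j)\leftrightarrow(k,\ell)$, the off-diagonal part is twice its restriction to $i>k$; there I set $t=i-k\ge1$, so that also $\ell=j+t$ and the summand factors as $\frac{q^{k+t}}{(1-q^k)(1-q^{k+t})}\cdot\frac{q^{j}}{(1-q^j)(1-q^{j+t})}$ with $k,j,t\ge1$ ranging independently. Applying the elementary identity $\frac1{(1-q^a)(1-q^{a+t})}=\frac1{1-q^t}\big(\frac1{1-q^a}-\frac{q^t}{1-q^{a+t}}\big)$ collapses the two inner sums to $\frac{q^t}{1-q^t}\sum_{g=1}^{t}\frac{q^g}{1-q^g}$ and $\frac1{1-q^t}\sum_{g=1}^{t}\frac{q^g}{1-q^g}$ respectively. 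Hence the left-hand side equals $Z(2)^2+2W$, where $W:=\sum_{t\ge1}\frac{q^t}{(1-q^t)^2}\big(\sum_{g=1}^{t}\frac{q^g}{1-q^g}\big)^2$, and since $Z(2)^2-Z(4)=2Z(2,2)$ by Lemma~\ref{Z2k2k}, it remains to prove the identity $3W=Z(2,2)+2T$, which is equivalent to \eqref{i+j=k+l.0}.

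For $3W=Z(2,2)+2T$ I would expand both sides by peeling off the outermost variable. Interchanging summations gives $W=\sum_{g,h\ge1}\frac{q^g}{1-q^g}\frac{q^h}{1-q^h}\sum_{t\ge\max(g,h)}\frac{q^t}{(1-q^t)^2}$; splitting into $g=h$ and $g\ne h$ and treating the diagonal terms with $\frac{q^{2g}}{(1-q^g)^2}=\frac{q^g}{(1-q^g)^2}-\frac{q^g}{1-q^g}$ and $\frac{q^{2g}}{(1-q^g)^3}=\frac12\frac{q^g(1+q^g)}{(1-q^g)^3}-\frac12\frac{q^g}{(1-q^g)^2}$ expresses $W$ through $Z(2,2)$, $Z(4)$ and multiple series of the form $\sum_{n>m>k}(\cdots)$. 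On the other side, $\frac{q^{i+j+k}}{1-q^{i+j+k}}=\sum_{d\ge1}q^{d(i+j+k)}$ gives $T=\sum_{d\ge1}\big(\sum_{f\ge d}\frac{q^f}{1-q^f}\big)^3$, and expanding the cube and summing over $d$ yields $T=\sum_{f_1,f_2,f_3>0}\min(f_1,f_2,f_3)\prod_{i=1}^{3}\frac{q^{f_i}}{1-q^{f_i}}$; re-expanding this by the same elementary splittings and one more application of \eqref{qiqj} produces multiple series of the same shape. The main obstacle is this final matching: the intermediate multiple series occurring on both sides are not themselves multiple $q$-zeta values, and one must organise them (being careful about every diagonal term created when a product of two geometric factors is split) so that all of them cancel in $3W-2T$ except for $Z(2,2)$; Lemma~\ref{lemma_n2qn} and \eqref{ijkCO.01} then dispose of the few genuinely single- and double-index series that survive.
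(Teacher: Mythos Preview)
Your reduction of the quadruple sum to $Z(2)^2+2W$, with
\[
W=\sum_{t\ge1}\frac{q^t}{(1-q^t)^2}\Bigl(\sum_{g=1}^{t}\frac{q^g}{1-q^g}\Bigr)^{2},
\]
is correct, as is the observation that the lemma is then equivalent to the identity $3W=Z(2,2)+2T$.  The gap is that you stop here: your last paragraph only outlines how to attack $3W=Z(2,2)+2T$ and explicitly flags the ``final matching'' as the main obstacle.  That matching is not a formality; the diagonal bookkeeping you describe when expanding $W$ and the cube in $T=\sum_{d\ge1}\bigl(\sum_{f\ge d}\frac{q^f}{1-q^f}\bigr)^3$ generates several auxiliary multiple sums that do not individually lie in $\qMZV$, and verifying that they cancel in the combination $3W-2T$ is a computation of roughly the same length and delicacy as the proof of Lemma~\ref{lemma_ijk100}.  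So as written the proposal is a sound reduction followed by an unfinished core step.

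The paper takes a genuinely different and much shorter route that bypasses this computation entirely.  It quotes from \cite{SQ} that
\[
\Theta_2(q)=\Coe_{z^0}\,\frac1{4!}\left(\sum_{m>0}\frac{(qz)^m}{1-q^m}-\sum_{m>0}\frac{z^{-m}}{1-q^m}\right)^{4}
\]
is a quasi-modular form of weight~$4$.  Expanding the fourth power gives $\Theta_2(q)=-\tfrac13 T+\tfrac14 S$, where $S$ is the left-hand side of the lemma.  Since a weight-$4$ quasi-modular form lies in the $\Q$-span of $1,\,Z(2),\,Z(2)^2,\,Z(4)$, its coefficients are determined by the first few terms of the $q$-expansion, yielding $\Theta_2(q)=\tfrac13 Z(2)^2-\tfrac1{12}Z(4)$; the lemma follows immediately.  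The trade-off is clear: the paper imports the quasi-modularity of $\Theta_2$ as a black box and gets the result almost for free, whereas your approach is self-contained but leaves a combinatorial identity of comparable difficulty still to be proved.
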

\begin{proof}
By \cite[Corollary~4.9~(i)]{SQ}, we have a quasi-modular form 
$\Theta_2(q)$ of weight $4$. Moreover,  
by \cite[Lemma~2.3~(i) and (ii)]{SQ}, 
$$
  \Theta_2(q)
= \Coe_{z^0} \frac{1}{4!} \left (\sum_{m > 0} \frac{(q z)^{m}}{1-q^{m}}
  - \sum_{m > 0} \frac{z^{-m}}{1-q^{m}} \right )^4
$$
where $\Coe_{z^0}(\cdot)$ denotes the coefficient of $z^0$.
Expanding the right-hand-side yields
\begin{eqnarray}     \label{i+j=k+l.1}
   \Theta_2(q)
&=&-\frac13 \sum_{i,j,k > 0} \frac{q^{i+j+k}}
      {(1-q^i)(1-q^j)(1-q^k)(1-q^{i+j+k})}  \nonumber  \\
& &+ \,\, \frac14 \sum_{i,j,k, \ell >0, i+j=k+\ell}   
   \frac{q^{i+j}}{(1-q^i)(1-q^j)(1-q^k)(1-q^{\ell})}.
\end{eqnarray}
Expanding the right-hand-side of \eqref{i+j=k+l.1} further, we obtain 
\begin{eqnarray}     \label{i+j=k+l.2}
\Theta_2(q) = \frac14 q^2+ \frac53 q^3+ \frac{19}4 q^4+11q^5 +O(q^6)
\end{eqnarray}
where $O(q^k)$ denotes the terms with degree of $q$ being at least $k$.

On the other hand, by \eqref{20170812522pm}, the quasi-modular form $\Theta_2(q)$ of 
weight $4$ can be written as 
a linear combination of $1, Z(2), Z(2)^2$ and $Z(4)$:
\begin{eqnarray}     \label{i+j=k+l.3}
  \Theta_2(q)
= r_1 + r_2 \cdot Z(2) + r_3 \cdot Z(2)^2 + r_4 \cdot Z(4)
\end{eqnarray}
where $r_1, r_2, r_3, r_4 \in \Q$. Expanding the right-hand-side of 
\eqref{i+j=k+l.3} and using \eqref{i+j=k+l.2} enable us to 
determine the coefficients $r_1, r_2, r_3, r_4$. We have
\begin{eqnarray}     \label{i+j=k+l.4}
\Theta_2(q) = \frac13 Z(2)^2 - \frac1{12} Z(4).
\end{eqnarray}
Now our lemma follows from \eqref{i+j=k+l.1} and \eqref{i+j=k+l.4}.
\end{proof}

Our last lemma in this subsection calculates the series 
$$
\sum_{i,j,k > 0} \frac{q^{i+j+k}}{(1-q^i)(1-q^j)(1-q^k)(1-q^{i+j+k})}.
$$
The proof is a little long, but the main idea is to 
apply \eqref{qiqj} repeatedly.

\begin{lemma} \label{lemma_ijk100}
$\displaystyle{\sum_{i,j,k > 0} \frac{q^{i+j+k}}{(1-q^i)(1-q^j)
(1-q^k)(1-q^{i+j+k})}}$ is equal to 
$$
-3 \sum_{n> m > 0} \frac{nq^{n}}{(1-q^{n})^2(1-q^{m})}
   - \frac{3}{2} q\frac{\rm d}{{\rm d}q}[1]
   + \frac{31}{4} Z(4) - \frac{1}{4} Z(2)^2 + \frac{3}{2} Z(2)  
$$
where $[1]$ is the series from \eqref{example[s]}.
\end{lemma}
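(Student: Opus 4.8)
The plan is to reduce the triple sum on the left, which I denote $S$, to double sums already evaluated in Lemma~\ref{ijkCO} and single sums controlled by Lemma~\ref{lemma_n2qn}; the only tool is the partial-fraction identity \eqref{qiqj}, applied twice, together with $\frac{1}{1-q^a}=1+\frac{q^a}{1-q^a}$ and the symmetries of the summation variables.

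First I would apply \eqref{qiqj} to $\frac{1}{(1-q^j)(1-q^k)}$, which produces a factor $(1-q^{j+k})$; writing $m=j+k$ and summing over the pairs $(j,k)$ with $j+k=m$, the $j\leftrightarrow k$ symmetry together with $\frac1{1-q^j}=1+\frac{q^j}{1-q^j}$ turns $\sum_{j+k=m}\bigl(\frac1{1-q^j}+\frac{q^k}{1-q^k}\bigr)$ into $(m-1)+2\sum_{1\le j<m}\frac{q^j}{1-q^j}$, so that $S=S_1+S_2$ with
\[
S_1=\sum_{i,m>0}\frac{(m-1)\,q^{i+m}}{(1-q^i)(1-q^m)(1-q^{i+m})},\qquad
S_2=2\sum_{i>0}\sum_{m>j>0}\frac{q^{i+m}}{(1-q^i)(1-q^m)(1-q^{i+m})}\cdot\frac{q^j}{1-q^j}.
\]
For $S_1$ I would peel off the $-1$ and use the symmetry $(i,m)\mapsto(m,i)$, which fixes $\frac{q^{i+m}}{(1-q^i)(1-q^m)(1-q^{i+m})}$, to rewrite
\[
S_1=\tfrac12\sum_{i,m>0}\frac{(i+m)q^{i+m}}{(1-q^i)(1-q^m)(1-q^{i+m})}-\sum_{i,m>0}\frac{q^{i+m}}{(1-q^i)(1-q^m)(1-q^{i+m})};
\]
both pieces are given by Lemma~\ref{ijkCO} (formulas \eqref{ijkCO.02} and \eqref{ijkCO.01}), so $S_1$ becomes an explicit $\Q$-linear combination of $\sum_{n>m>0}\frac{nq^n}{(1-q^n)^2(1-q^m)}$, $\Dq[1]$, and $Z(2),Z(3),Z(4),Z(2)^2$.

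The substantial step is $S_2$. I would apply \eqref{qiqj} once more, now to $\frac{1}{(1-q^i)(1-q^m)}$, turning the inner kernel into $\frac{q^{i+m}}{(1-q^{i+m})^2}\bigl(\frac1{1-q^i}+\frac{q^m}{1-q^m}\bigr)$, and then set $n=i+m$. Because of the constraint $m>j$ the surviving variable now runs over a truncated range ($1\le i\le n-j-1$, respectively $j<m\le n-1$), so the two summands no longer recombine into $\frac{1+q^i}{1-q^i}$ as cleanly as in the proof of \eqref{ijkCO.02}; nevertheless, expanding the truncated inner sums by $\frac1{1-q^i}=1+\frac{q^i}{1-q^i}$ and reindexing leaves only a rational multiple of $\sum_{n>m>0}\frac{nq^n}{(1-q^n)^2(1-q^m)}$, single-index series $\sum_{n>0}\frac{P(n)q^n}{(1-q^n)^2}$ with $P$ a quadratic polynomial (which collapse via \eqref{def_qMZV.04}, \eqref{lemma_n2qn.01} and \eqref{lemma_n2qn.02} to $\Dq[1]$, $Z(2)$, $Z(4)$, $Z(2)^2$), and one further sum of the shape $\sum_{i,j>0}\frac{q^{i+j}}{(1-q^i)(1-q^j)(1-q^{i+j})}$ handled again by \eqref{ijkCO.01}. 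Adding $S_1$ to $S_2$, replacing any degenerate double $q$-zeta value by Lemma~\ref{Z2k2k}, the $Z(3)$ terms and the bare $[1]$ contributions cancel, and what remains is exactly the claimed formula.

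I expect the main obstacle to be the bookkeeping in the $S_2$ reduction: keeping track of which index is free after each use of \eqref{qiqj}, truncating the inner sums at $n-j-1$ versus $n-1$ correctly, and checking that the several $[1]$-flavoured remainders coalesce into a single $\Dq[1]$. Since the reduction exhibits $S$ as an element of the finite-dimensional $\Q$-span of $\sum_{n>m>0}\frac{nq^n}{(1-q^n)^2(1-q^m)}$, $\Dq[1]$, $Z(2)$, $Z(4)$, $Z(2)^2$, one can, as a safeguard, pin down each coefficient by comparing the first few $q$-coefficients of both sides, exactly as in the proof of Lemma~\ref{i+j=k+l}.
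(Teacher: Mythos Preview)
Your treatment of $S_1$ is correct and clean: the symmetrization in $(i,m)$ together with \eqref{ijkCO.01} and \eqref{ijkCO.02} does the job. The gap is in $S_2$. After you apply \eqref{qiqj} to $\frac{1}{(1-q^i)(1-q^m)}$ and set $n=i+m$, the inner sum over $(i,m)$ with $m>j$ becomes
\[
(n-j-1)+\sum_{a=1}^{n-j-1}\frac{q^a}{1-q^a}+\sum_{b=1}^{n-1}\frac{q^b}{1-q^b}-\sum_{b=1}^{j}\frac{q^b}{1-q^b}.
\]
The first summand contributes a genuine double sum, but the second and third produce, after multiplying by $\frac{q^n}{(1-q^n)^2}\cdot\frac{q^j}{1-q^j}$ and summing, the triple sums
\[
\sum_{\substack{i,j\ge 1\\ n>i+j}}\frac{q^{n+i+j}}{(1-q^n)^2(1-q^i)(1-q^j)}
\quad\text{and}\quad
\sum_{\substack{a,b\ge 1\\ n>a,\,n>b}}\frac{q^{n+a+b}}{(1-q^n)^2(1-q^a)(1-q^b)}.
\]
These do \emph{not} reduce to your list of building blocks by reindexing and $\frac{1}{1-q^a}=1+\frac{q^a}{1-q^a}$ alone; each such manipulation just reshuffles the constraints $n>i+j$ versus $n>a$, $n>b$ and regenerates a triple sum of a different shape. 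The paper's proof faces exactly the same obstruction and resolves it by applying \eqref{qiqj} a third time to reach the fully nested range $n_1>n_2>n_3$, and then invoking Bradley's identities \eqref{Bra1Cor4.1}--\eqref{Bra1Cor4.2} from \cite{Bra1}, which collapse
\[
\sum_{n_1>n_2>n_3>0}\frac{q^{n_1}}{(1-q^{n_1})^2(1-q^{n_2})(1-q^{n_3})}=\sum_{n>0}\frac{q^{3n}}{(1-q^n)^4}
\]
(and its depth-$2$ analogue) to single-index sums. Your sketch never reaches that step, and without it the reduction of $S_2$ is incomplete.

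The proposed safeguard does not close the gap either. Comparing finitely many $q$-coefficients only determines the coefficients \emph{once you know} $S$ lies in the $\Q$-span of $\sum_{n>m}\frac{nq^n}{(1-q^n)^2(1-q^m)}$, $\Dq[1]$, $Z(2)$, $Z(4)$, $Z(2)^2$; that membership is exactly what has to be proved. In Lemma~\ref{i+j=k+l} this strategy is legitimate because \cite{SQ} supplies an \emph{a priori} quasi-modularity statement giving a finite-dimensional target; here you have no analogous external input. To repair the argument, either carry the $S_2$ reduction all the way to the $n_1>n_2>n_3$ form and invoke \eqref{Bra1Cor4.1}--\eqref{Bra1Cor4.2}, or prove those Bradley identities directly for the specific triple sums that arise.
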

\begin{proof}
For simplicity, we use {\rm LHS} to denote 
$$
\sum_{i,j,k > 0} \frac{q^{i+j+k}}{(1-q^i)(1-q^j)(1-q^k)(1-q^{i+j+k})}.
$$
By \eqref{qiqj}, {\rm LHS} is equal to
$$
\sum_{i,j,k > 0} \frac{q^{i+j+k}}{(1-q^i)(1-q^k)(1-q^{i+j})
     (1-q^{i+j+k})}   
$$
$$
+ \sum_{i,j,k > 0} \frac{q^{i+2j+k}}{(1-q^j)(1-q^{i+j})(1-q^k)
     (1-q^{i+j+k})}.
$$
Applying \eqref{qiqj} repeatedly, we see that {\rm LHS} is equal to
\begin{eqnarray*}
& &\sum_{i,j,k > 0} \frac{q^{i+j+k}}{(1-q^i)(1-q^k)(1-q^{i+j+k})^2}   \\
& &+ \sum_{i,j,k > 0} \frac{q^{2i+2j+k}}{(1-q^i)(1-q^{i+j})
     (1-q^{i+j+k})^2}   \\
& &+ \sum_{i,j,k > 0} \frac{q^{i+2j+k}}{(1-q^j)(1-q^{i+j})
     (1-q^{i+j+k})^2}  \\
& &+ \sum_{i,j,k > 0} \frac{q^{i+2j+2k}}{(1-q^j)(1-q^k)(1-q^{i+j+k})^2} 
\end{eqnarray*}
which in turn is equal to 
\begin{eqnarray*}
& &\sum_{i,j,k > 0} \frac{q^{i+j+k}}{(1-q^i)(1-q^{i+k})
     (1-q^{i+j+k})^2}   \\
& &+ \sum_{i,j,k > 0} \frac{q^{i+j+2k}}{(1-q^k)(1-q^{i+k})
     (1-q^{i+j+k})^2}   \\
& &+ \sum_{i,j,k > 0} \frac{q^{2i+2j+k}}{(1-q^i)(1-q^{i+j})
     (1-q^{i+j+k})^2}   \\
& &+ \sum_{i,j,k > 0} \frac{q^{i+2j+k}}{(1-q^j)(1-q^{i+j})
     (1-q^{i+j+k})^2}  \\
& &+ \sum_{i,j,k > 0} \frac{q^{i+2j+2k}}{(1-q^j)(1-q^{j+k})
     (1-q^{i+j+k})^2}\\
& &+ \sum_{i,j,k > 0} \frac{q^{i+2j+3k}}{(1-q^k)(1-q^{j+k})
     (1-q^{i+j+k})^2}.
\end{eqnarray*}
After changing of variables, we conclude that 
\begin{eqnarray*}
   {\rm LHS}
&=&\sum_{n_1>n_2>n_3 > 0} \frac{q^{n_1}}{(1-q^{n_1})^2(1-q^{n_2})
     (1-q^{n_3})}   \\
& &+ 2 \sum_{n_1>n_2>n_3 > 0} \frac{q^{n_1+n_2}}{(1-q^{n_1})^2(1-q^{n_2})
     (1-q^{n_3})}   \\
& &+ 2 \sum_{n_1>n_2>n_3 > 0} \frac{q^{n_1+n_3}}{(1-q^{n_1})^2(1-q^{n_2})
     (1-q^{n_3})}  \\
& &+ \sum_{n_1>n_2>n_3 > 0} \frac{q^{n_1+n_2+n_3}}{(1-q^{n_1})^2(1-q^{n_2})
     (1-q^{n_3})}. 
\end{eqnarray*}
Since $q^n/(1-q^n) = 1/(1-q^n) - 1$, {\rm LHS} is equal to
\begin{eqnarray*} 
& &\sum_{n_1>n_2>n_3 > 0} \frac{q^{n_1}}{(1-q^{n_1})^2(1-q^{n_2})
     (1-q^{n_3})}   \\
& &+ 2 \sum_{n_1>n_2>n_3 > 0} \frac{q^{n_1}}{(1-q^{n_1})^2
     (1-q^{n_3})} \left (\frac1{1-q^{n_2}} - 1 \right )   \\
& &+ 2 \sum_{n_1>n_2>n_3 > 0} \frac{q^{n_1}}{(1-q^{n_1})^2(1-q^{n_2})} 
     \left (\frac1{1-q^{n_3}} - 1 \right )  \\
& &+ \sum_{n_1>n_2>n_3 > 0} \frac{q^{n_1}}{(1-q^{n_1})^2}
     \left (\frac1{1-q^{n_2}} - 1 \right ) 
     \left (\frac1{1-q^{n_3}} - 1 \right ).
\end{eqnarray*}
Simplifying the above, we see that {\rm LHS} is equal to
\begin{eqnarray*} 
& &6 \sum_{n_1>n_2>n_3 > 0} \frac{q^{n_1}}{(1-q^{n_1})^2(1-q^{n_2})
     (1-q^{n_3})}   \\
& &- 3 \sum_{n_1> n_3 > 0} \frac{q^{n_1}}{(1-q^{n_1})^2
     (1-q^{n_3})} (n_1 - n_3 - 1)   \\
& &- 3 \sum_{n_1> n_2 > 0} \frac{q^{n_1}}{(1-q^{n_1})^2
     (1-q^{n_2})} (n_2 - 1)  \\
& &+ \sum_{n_1 > 0} \frac{q^{n_1}}{(1-q^{n_1})^2}
     \frac{(n_1-1)(n_1-2)}2  \\
&=&6 \sum_{n_1>n_2>n_3 > 0} \frac{q^{n_1}}{(1-q^{n_1})^2(1-q^{n_2})
     (1-q^{n_3})}   \\
& &- 3 \sum_{n_1> n_2 > 0} \frac{(n_1 - 2)q^{n_1}}{(1-q^{n_1})^2
     (1-q^{n_2})} 
     + \frac12 \sum_{n > 0} \frac{(n-1)(n-2)q^{n}}{(1-q^{n})^2}.
\end{eqnarray*}

Next, by \cite[Corollary~4]{Bra1}, we have
\begin{eqnarray}
\sum_{n_1 > n_2 > 0} \frac{q^{n_1}}{(1-q^{n_1})^2} \frac{1}{1-q^{n_2}}
  &=&\sum_{n > 0} \frac{q^{2n}}{(1-q^n)^3}   \label{Bra1Cor4.1}  \\
\sum_{n_1>n_2>n_3 > 0} \frac{q^{n_1}}{(1-q^{n_1})^2(1-q^{n_2})(1-q^{n_3})}
  &=&\sum_{n > 0} \frac{q^{3n}}{(1-q^n)^4}.   \label{Bra1Cor4.2}
\end{eqnarray}
By Lemma~\ref{lemma_n2qn}, we obtain
\begin{eqnarray*}   
& &\frac{1}{2}\sum_{n > 0} \frac{(n-1)(n-2)q^n}{(1-q^n)^2} \\
&=&\frac{1}{2} (3Z(4) - Z(2, 2) + Z(2)) 
   - \frac32 \sum_{n > 0} \frac{nq^n}{(1-q^n)^2} + Z(2) \\
&=&\frac{3}{2} Z(4) - \frac{1}{2} Z(2, 2) + \frac{3}{2} Z(2) 
   - \frac{3}{2} q\frac{\rm d}{{\rm d}q}[1].
\end{eqnarray*}
Therefore, {\rm LHS} is equal to
\begin{eqnarray*} 
& &6 \sum_{n > 0} \frac{q^{3n}}{(1-q^n)^4}   
   + 6 \sum_{n > 0} \frac{q^{2n}}{(1-q^n)^3}
   - 3 \sum_{n_1> n_2 > 0} \frac{n_1q^{n_1}}{(1-q^{n_1})^2(1-q^{n_2})} \\
& &+ \frac{3}{2} Z(4) - \frac{1}{2} Z(2, 2) + \frac{3}{2} Z(2) 
   - \frac{3}{2} q\frac{\rm d}{{\rm d}q}[1]  \\
&=&6 \sum_{n > 0} \frac{q^{2n}}{(1-q^n)^4}   
   - 3 \sum_{n> m > 0} \frac{nq^{n}}{(1-q^{n})^2(1-q^{m})} 
   - \frac{3}{2} q\frac{\rm d}{{\rm d}q}[1] \\
& &+ \frac{3}{2} Z(4) - \frac{1}{2} Z(2, 2) + \frac{3}{2} Z(2)   \\
&=&-3 \sum_{n> m > 0} \frac{nq^{n}}{(1-q^{n})^2(1-q^{m})}
   - \frac{3}{2} q\frac{\rm d}{{\rm d}q}[1]
   + \frac{15}{2} Z(4) - \frac{1}{2} Z(2, 2) + \frac{3}{2} Z(2).
\end{eqnarray*}
Combining this with Lemma~\ref{Z2k2k} completes the proof of our lemma.
\end{proof}
\subsection{\bf The formula for $(q;q)_\infty^{\chi(X)} \cdot 
F_2^{\alpha}(q)$} 
\label{subsect_formula}
$\,$
\par

In the present subsection, we will simplify the expression in 
Proposition~\ref{prop4thterms}. We will achieve this by simplifying 
the coefficients of $\langle e_X, \alpha \rangle, 
\langle 1_X, \alpha \rangle, \langle K_X, \alpha \rangle, 
\langle K_X^2, \alpha \rangle$ in 
$(q;q)_\infty^{\chi(X)} \cdot F_2^{\alpha}(q)$ 
in the next four lemmas respectively.

\begin{lemma}   \label{eXalpha} 
The coefficient of $\langle e_X, \alpha \rangle$ in 
$(q;q)_\infty^{\chi(X)} \cdot F_2^{\alpha}(q)$ is equal to
\begin{eqnarray}   \label{lemma_ch2L.6}
-\frac{7}{24} Z(4) - \frac{23}{24} Z(2)^2.
\end{eqnarray}
\end{lemma}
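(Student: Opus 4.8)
The plan is to read off from Proposition~\ref{prop4thterms} the three summands that carry the pairing $\langle e_X,\alpha\rangle$ and to show that their combined coefficient collapses to \eqref{lemma_ch2L.6} after invoking the elementary identities of Subsection~\ref{subsect_identities}. Among the seven lines of Proposition~\ref{prop4thterms}, the factors $\langle 1_X-K_X,\alpha\rangle$, $\langle(1_X-K_X)^2,\alpha\rangle$, $\langle(1_X-K_X)^3,\alpha\rangle$ and the two $\langle K_X^2,\alpha\rangle$-lines expand only into $\langle 1_X,\alpha\rangle$, $\langle K_X,\alpha\rangle$ and $\langle K_X^2,\alpha\rangle$ (here one uses $K_X^3=0$ in $H^*(X)$), so none of them contributes an $e_X$-term. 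Hence the coefficient of $\langle e_X,\alpha\rangle$ equals $-A-B-C$, where
\begin{align*}
A&=\sum_{n>0}\frac{n^2-1}{12}\cdot\frac{q^n}{(1-q^n)^2},\\
B&=\sum_{i>j>0}\left(\frac{iq^i}{1-q^i}\cdot\frac{q^j}{(1-q^j)^2}+\frac{jq^j}{1-q^j}\cdot\frac{q^i}{(1-q^i)^2}\right),\\
C&=\sum_{i>0}\frac{iq^{2i}}{(1-q^i)^3}.
\end{align*}

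First I would dispatch $A$: splitting off the $-1$ and applying \eqref{lemma_n2qn.02} together with $\sum_{n>0}q^n/(1-q^n)^2=Z(2)$ gives
\[
A=\frac1{12}\left(\sum_{n>0}\frac{n^2q^n}{(1-q^n)^2}-Z(2)\right)=\frac1{12}\left(\frac72 Z(4)-\frac12 Z(2)^2\right)=\frac{7}{24}Z(4)-\frac1{24}Z(2)^2.
\]

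The crux is the rewriting of $B$. Setting $f(n)=nq^n/(1-q^n)$ and $g(n)=q^n/(1-q^n)^2$, the first summand of $B$ is $\sum_{a>b>0}f(a)g(b)$, while swapping the roles of $i$ and $j$ in the second summand turns it into $\sum_{a<b,\,a,b>0}f(a)g(b)$; hence $B=\sum_{a\ne b,\,a,b>0}f(a)g(b)$. Completing the diagonal,
\[
B=\Bigl(\sum_{a>0}f(a)\Bigr)\Bigl(\sum_{b>0}g(b)\Bigr)-\sum_{a>0}f(a)g(a)=Z(2)^2-\sum_{a>0}\frac{aq^{2a}}{(1-q^a)^3}=Z(2)^2-C,
\]
where $\sum_{a>0}f(a)=Z(2)$ by \eqref{lemma_n2qn.01} and $\sum_{b>0}g(b)=Z(2)$ by Definition~\ref{def_qMZV}~(iv). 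Consequently the two copies of $C$ cancel:
\[
-A-B-C=-A-\bigl(Z(2)^2-C\bigr)-C=-A-Z(2)^2=-\frac{7}{24}Z(4)+\frac1{24}Z(2)^2-Z(2)^2=-\frac{7}{24}Z(4)-\frac{23}{24}Z(2)^2,
\]
which is exactly \eqref{lemma_ch2L.6}.

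I do not anticipate a genuine obstacle. The only delicate points are the bookkeeping of which lines of Proposition~\ref{prop4thterms} actually carry an $e_X$-pairing, and the reindexing that recasts $B$ as a sum over $a\ne b$. Once that is done, the auxiliary series $C=\sum_{a>0}aq^{2a}/(1-q^a)^3$ — the one quantity whose expansion in terms of $Z(2),Z(4)$ and $\Dq[1]$ would be annoying — drops out identically, so that only \eqref{lemma_n2qn.01}, \eqref{lemma_n2qn.02} and the defining formula $\sum_{n>0}q^n/(1-q^n)^2=Z(2)$ are needed.
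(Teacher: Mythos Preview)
Your proof is correct and essentially identical to the paper's own argument: both identify the same three $e_X$-terms from Proposition~\ref{prop4thterms}, both reduce $B+C$ to the product $\bigl(\sum_{i>0}\frac{iq^i}{1-q^i}\bigr)\bigl(\sum_{j>0}\frac{q^j}{(1-q^j)^2}\bigr)=Z(2)^2$ by symmetrising the off-diagonal sum and reinserting the diagonal, and both finish via \eqref{lemma_n2qn.01} and \eqref{lemma_n2qn.02}. The only cosmetic difference is that you record the intermediate identity $B=Z(2)^2-C$ and then cancel $C$, whereas the paper writes $B+C=\sum_{i,j>0}\frac{iq^i}{1-q^i}\frac{q^j}{(1-q^j)^2}$ directly.
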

\begin{proof}
By Proposition~\ref{prop4thterms}, 
the coefficient of $\langle e_X, \alpha \rangle$ in 
$(q;q)_\infty^{\chi(X)} \cdot F_2^{\alpha}(q)$ equals
\begin{eqnarray*}   
& &-\sum_{n > 0} \frac{n^2 - 1}{12} \frac{q^n}{(1-q^n)^2}
   - \sum_{i, j > 0} \frac{iq^i}{1-q^i} \frac{q^j}{(1-q^j)^2}  \\
&=&\frac{1}{12} Z(2) - \frac{1}{12} \sum_{n > 0} \frac{n^2 q^n}{(1-q^n)^2}
   - \sum_{i> 0} \frac{iq^i}{1-q^i} \cdot 
   \sum_{j > 0} \frac{q^j}{(1-q^j)^2} \\
&=&\frac{1}{12} Z(2) - \frac{1}{12} \sum_{n > 0} \frac{n^2 q^n}{(1-q^n)^2}
   - \sum_{i> 0} \frac{iq^i}{1-q^i} \cdot Z(2)  \\
&=&-\frac{7}{24} Z(4) - \frac{23}{24} Z(2)^2
\end{eqnarray*}
where we have used Lemma~\ref{lemma_n2qn} in the last step.
\end{proof}

\begin{lemma}   \label{1Xalpha} 
The coefficient of $\langle 1_X, \alpha \rangle$ in 
$(q;q)_\infty^{\chi(X)} \cdot F_2^{\alpha}(q)$ is equal to
$$
\frac1{12} Z(4) - \frac13 Z(2)^2.
$$
\end{lemma}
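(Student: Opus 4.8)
The plan is to extract from Proposition~\ref{prop4thterms} exactly the terms carrying the factor $\langle 1_X, \alpha \rangle$ and simplify them using the $q$-zeta identities of Subsection~\ref{subsect_identities}. Looking at Proposition~\ref{prop4thterms}, the classes $\langle 1_X, \alpha \rangle$, $\langle K_X, \alpha \rangle$, and $\langle K_X^2, \alpha \rangle$ are entangled inside the pairings $\langle 1_X - K_X, \alpha \rangle$, $\langle (1_X-K_X)^2, \alpha \rangle$, and $\langle (1_X - K_X)^3, \alpha \rangle$. Expanding $(1_X - K_X)^j$ via the binomial theorem and collecting the $1_X$-component (i.e. the $j=0$ term of each expansion, since $1_X$ acts as the identity in the cohomology ring), the coefficient of $\langle 1_X, \alpha \rangle$ is
\begin{eqnarray*}
& &\sum_{\substack{i \ge j \ge k > 0}} \frac{1}{\lambda^!}
   \frac{q^{i+j+k}}{(1-q^{i+j+k})(1-q^i)(1-q^j)(1-q^k)}
   + \sum_{\substack{i \ge j \ge k > 0}} \frac{1}{\lambda^!}
   \frac{q^{i+j+k}}{(1-q^i)(1-q^j)(1-q^k)(1-q^{i+j+k})}   \\
& &- \sum_{\substack{i \ge j > 0, k \ge \ell > 0\\i+j=k+\ell}} \frac{1}{\lambda^!}
   \frac{q^{i+j}}{(1-q^i)(1-q^j)(1-q^k)(1-q^{\ell})},
\end{eqnarray*}
where in each sum $\lambda$ is the relevant generalized partition recording the multiplicities.

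**Key steps.**
First I would convert the ordered sums $i \ge j \ge k > 0$ (with the factor $1/\lambda^!$ compensating for repeated parts) into unordered sums over all $i,j,k > 0$; a standard orbit-counting argument shows that $\sum_{i\ge j\ge k>0} \frac{1}{\lambda^!}(\cdots)$ over a symmetric summand equals $\frac{1}{3!}\sum_{i,j,k>0}(\cdots)$, and similarly the third sum becomes $\frac{1}{(2!)^2}\sum_{i,j,k,\ell>0,\,i+j=k+\ell}(\cdots)$ — but one must check that the summands really are symmetric in the relevant variables, which they are since $i+j+k$ is symmetric. The first two sums are identical, so together they contribute $\frac{1}{3}\sum_{i,j,k>0}\frac{q^{i+j+k}}{(1-q^i)(1-q^j)(1-q^k)(1-q^{i+j+k})}$, which is evaluated by Lemma~\ref{lemma_ijk100}. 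The third sum is $\frac14\sum_{i,j,k,\ell>0,\,i+j=k+\ell}\frac{q^{i+j}}{(1-q^i)(1-q^j)(1-q^k)(1-q^\ell)}$, which is handled by Lemma~\ref{i+j=k+l}; note that Lemma~\ref{i+j=k+l} reduces this back to $\sum_{i,j,k>0}\frac{q^{i+j+k}}{(1-q^i)(1-q^j)(1-q^k)(1-q^{i+j+k})}$ plus a combination of $Z(2)^2$ and $Z(4)$, and then Lemma~\ref{lemma_ijk100} finishes. Assembling the pieces, the messy terms $-3\sum_{n>m>0}\frac{nq^n}{(1-q^n)^2(1-q^m)}$ and $-\frac32 q\frac{\rm d}{{\rm d}q}[1]$ appearing in Lemma~\ref{lemma_ijk100} should cancel between the two contributions (they enter with opposite overall sign after the combinatorial weights $\frac13$ and $-\frac14\cdot\frac43 = -\frac13$), leaving only a rational combination of $Z(2)$, $Z(2)^2$, and $Z(4)$; one then reads off $\frac{1}{12}Z(4) - \frac13 Z(2)^2$.

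**Main obstacle.**
The bookkeeping in the first step is where errors are most likely: correctly identifying $\lambda^!$ for each generalized partition appearing in Proposition~\ref{prop4thterms} (e.g. distinguishing the case $i=j=k$, which contributes $\lambda^! = 3!$ in the triple-part sums, from $i>j>k$, which contributes $\lambda^!=1$), and ensuring the passage from ordered to unordered sums introduces exactly the right constant. A second delicate point is verifying that the non-quasimodular pieces ($\sum_{n>m>0}\frac{nq^n}{(1-q^n)^2(1-q^m)}$ and $q\frac{\rm d}{{\rm d}q}[1]$) genuinely cancel rather than merely simplify; since Lemma~\ref{lemma_ijk100} produces them with coefficients $-3$ and $-\frac32$, and these enter the coefficient of $\langle 1_X,\alpha\rangle$ through $\frac13$ (from the first two sums) minus $\frac13$ (from Lemma~\ref{i+j=k+l} rewriting the third sum), the cancellation is forced — but it should be confirmed by tracking the arithmetic carefully. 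Once those two points are settled, the remaining computation is a routine linear combination of the identities in Lemma~\ref{lemma_n2qn} and Lemma~\ref{Z2k2k}.
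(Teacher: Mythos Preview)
Your proposal is correct and follows essentially the same route as the paper: extract the $\langle 1_X,\alpha\rangle$ part of the three sums in Proposition~\ref{prop4thterms}, symmetrize to get $\tfrac13 S - \tfrac14 T$ with $S=\sum_{i,j,k>0}\tfrac{q^{i+j+k}}{(1-q^i)(1-q^j)(1-q^k)(1-q^{i+j+k})}$ and $T=\sum_{i+j=k+\ell}\tfrac{q^{i+j}}{(1-q^i)(1-q^j)(1-q^k)(1-q^\ell)}$, and then simplify. The only difference is that the paper short-circuits your last step by observing from \eqref{i+j=k+l.1} that $\tfrac13 S - \tfrac14 T = -\Theta_2(q)$ and then invoking \eqref{i+j=k+l.4}; this makes your appeal to Lemma~\ref{lemma_ijk100} (and the cancellation of its ``messy terms'') unnecessary, since the $S$-contributions cancel as a whole once you apply Lemma~\ref{i+j=k+l}.
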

\begin{proof}
By Proposition~\ref{prop4thterms}, 
the coefficient of $\langle 1_X, \alpha \rangle$ in 
$(q;q)_\infty^{\chi(X)} \cdot F_2^{\alpha}(q)$ equals
\begin{eqnarray*}     
\sum_{\substack{\la = (-i,-j,-k, i+j+k)\\i \ge j \ge k > 0}} 
    \frac{1}{\la^!} \frac{q^{i+j+k}}{(1-q^{i+j+k})(1-q^i)(1-q^j)(1-q^k)}
\end{eqnarray*}
\begin{eqnarray*}     
+ \sum_{\substack{\la = (-i-j-k, k, j, i)\\i \ge j \ge k > 0}} 
    \frac{1}{\la^!} \frac{q^{i+j+k}}{(1-q^i)(1-q^j)(1-q^k)(1-q^{i+j+k})}
\end{eqnarray*}
\begin{eqnarray*}     
- \sum_{\substack{\la = (-i, -j, \ell, k)\\i \ge j > 0, 
      k \ge \ell > 0, i+j=k+\ell}} 
   \frac{1}{\la^!} \frac{q^{i+j}}{(1-q^i)(1-q^j)(1-q^k)(1-q^{\ell})}.
\end{eqnarray*}
Note that 
\begin{eqnarray}    \label{1Xalpha.1} 
& &\sum_{\substack{\la = (-i,-j,-k, i+j+k)\\i \ge j \ge k > 0}} 
   \frac{1}{\la^!} \frac{q^{i+j+k}}{(1-q^{i+j+k})(1-q^i)(1-q^j)(1-q^k)}
   \nonumber  \\
&=&\sum_{\substack{\la = (-i-j-k, k, j, i)\\i \ge j \ge k > 0}} 
    \frac{1}{\la^!} \frac{q^{i+j+k}}{(1-q^i)(1-q^j)(1-q^k)(1-q^{i+j+k})}
   \nonumber  \\
&=&\frac16 \sum_{i,j,k > 0} 
   \frac{q^{i+j+k}}{(1-q^i)(1-q^j)(1-q^k)(1-q^{i+j+k})}.
\end{eqnarray}
In addition, we see that 
\begin{eqnarray}    \label{1Xalpha.2} 
& &\sum_{\substack{\la = (-i, -j, \ell, k)\\i \ge j > 0, 
      k \ge \ell > 0, i+j=k+\ell}} 
   \frac{1}{\la^!} \frac{q^{i+j}}{(1-q^i)(1-q^j)(1-q^k)(1-q^{\ell})}
   \nonumber  \\
&=&\frac14 \, \sum_{i,j,k, \ell >0, i+j=k+\ell}   
   \frac{q^{i+j}}{(1-q^i)(1-q^j)(1-q^k)(1-q^{\ell})}.
\end{eqnarray}
Therefore, the coefficient of $\langle 1_X, \alpha \rangle$ in 
$(q;q)_\infty^{\chi(X)} \cdot F_2^{\alpha}(q)$ is equal to
\begin{eqnarray*}
& &\frac13 \sum_{i,j,k > 0} 
   \frac{q^{i+j+k}}{(1-q^i)(1-q^j)(1-q^k)(1-q^{i+j+k})}   \\
& &- \frac14 \, \sum_{i,j,k, \ell >0, i+j=k+\ell}   
   \frac{q^{i+j}}{(1-q^i)(1-q^j)(1-q^k)(1-q^{\ell})}   \\
&=&-\Theta_2(q)  \\
&=&\frac1{12} Z(4) - \frac13 Z(2)^2
\end{eqnarray*}
where we have applied \eqref{i+j=k+l.1} and \eqref{i+j=k+l.4}.
\end{proof}

\begin{lemma}   \label{KXalpha} 
The coefficient of $\langle K_X, \alpha \rangle$ in 
$(q;q)_\infty^{\chi(X)} \cdot F_2^{\alpha}(q)$ is equal to 
$$
-\frac1{6} Z(4) + \frac23 Z(2)^2.
$$
\end{lemma}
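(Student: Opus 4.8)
The plan is to isolate, in the formula for $(q;q)_\infty^{\chi(X)}\cdot F_2^{\alpha}(q)$ given by Proposition~\ref{prop4thterms}, exactly the summands that can produce the pairing $\langle K_X,\alpha\rangle$. Scanning that formula, the terms carrying a factor $\langle e_X,\alpha\rangle$ or $\langle K_X^2,\alpha\rangle$ contribute nothing here, and the only relevant summands are the three pairings $\langle 1_X-K_X,\alpha\rangle$, $\langle (1_X-K_X)^2,\alpha\rangle$ and $\langle (1_X-K_X)^3,\alpha\rangle$. Since $X$ is a surface, $K_X^3=0$ in $H^*(X)$, so these expand as
\[
\langle (1_X-K_X)^2,\alpha\rangle=\langle 1_X,\alpha\rangle-2\langle K_X,\alpha\rangle+\langle K_X^2,\alpha\rangle,
\]
\[
\langle (1_X-K_X)^3,\alpha\rangle=\langle 1_X,\alpha\rangle-3\langle K_X,\alpha\rangle+3\langle K_X^2,\alpha\rangle.
\]

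First I would package the three coefficient series occurring in Proposition~\ref{prop4thterms}. Writing
\[
T=\sum_{i,j,k>0}\frac{q^{i+j+k}}{(1-q^i)(1-q^j)(1-q^k)(1-q^{i+j+k})},\qquad
U=\sum_{\substack{i,j,k,\ell>0\\ i+j=k+\ell}}\frac{q^{i+j}}{(1-q^i)(1-q^j)(1-q^k)(1-q^{\ell})},
\]
the symmetrization identities \eqref{1Xalpha.1} and \eqref{1Xalpha.2} (already used in the proof of Lemma~\ref{1Xalpha}) identify the coefficient of $\langle 1_X-K_X,\alpha\rangle$ and of $\langle (1_X-K_X)^3,\alpha\rangle$ both as $\frac16 T$, and the coefficient of $\langle (1_X-K_X)^2,\alpha\rangle$ as $-\frac14 U$.

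Next I would collect the coefficient of $\langle K_X,\alpha\rangle$ from the displayed expansions: it equals
\[
-\frac16 T\;-\;3\cdot\frac16 T\;-\;2\cdot\Bigl(-\frac14 U\Bigr)=-\frac23 T+\frac12 U.
\]
Finally, Lemma~\ref{i+j=k+l} supplies $U=\frac43 Z(2)^2-\frac13 Z(4)+\frac43 T$; substituting this, the series $T$ cancels and the coefficient becomes $\frac23 Z(2)^2-\frac16 Z(4)$, as asserted.

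There is no genuine obstacle in this argument: the whole thing is bookkeeping, and the only points requiring care are the signs inside Proposition~\ref{prop4thterms} and the vanishing $K_X^3=0$ on a surface. The convenient feature worth flagging is that, in contrast with what will happen for the coefficient of $\langle K_X^2,\alpha\rangle$, the complicated triple series $T$ evaluated in Lemma~\ref{lemma_ijk100} drops out completely here, so this case does not need that lemma.
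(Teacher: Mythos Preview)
Your argument is correct and essentially identical to the paper's: both extract the $\langle K_X,\alpha\rangle$-contribution from Proposition~\ref{prop4thterms} via the symmetrizations \eqref{1Xalpha.1}--\eqref{1Xalpha.2} to obtain $-\tfrac23 T+\tfrac12 U$, and then eliminate $T$ using Lemma~\ref{i+j=k+l} (the paper phrases this last step as recognizing $-\tfrac23 T+\tfrac12 U=2\Theta_2(q)$ from \eqref{i+j=k+l.1} and \eqref{i+j=k+l.4}, which is the same identity). Your closing observation that Lemma~\ref{lemma_ijk100} is unnecessary here is also in line with the paper.
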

\noindent
{\it Proof.}
By Proposition~\ref{prop4thterms}, 
the coefficient of $\langle K_X, \alpha \rangle$ in 
$(q;q)_\infty^{\chi(X)} \cdot F_2^{\alpha}(q)$ equals
$$
-\sum_{\substack{\la = (-i,-j,-k, i+j+k)\\i \ge j \ge k > 0}} 
    \frac{1}{\la^!} \frac{q^{i+j+k}}{(1-q^{i+j+k})(1-q^i)(1-q^j)(1-q^k)}
$$
$$
- 3\sum_{\substack{\la = (-i-j-k, k, j, i)\\i \ge j \ge k > 0}} 
    \frac{1}{\la^!} \frac{q^{i+j+k}}{(1-q^i)(1-q^j)(1-q^k)(1-q^{i+j+k})}
$$
$$
+ 2\sum_{\substack{\la = (-i, -j, \ell, k)\\i \ge j > 0, 
      k \ge \ell > 0, i+j=k+\ell}} 
   \frac{1}{\la^!} \frac{q^{i+j}}{(1-q^i)(1-q^j)(1-q^k)(1-q^{\ell})}.
$$
By \eqref{1Xalpha.1} and \eqref{1Xalpha.2}, 
the coefficient of $\langle K_X, \alpha \rangle$ in 
$(q;q)_\infty^{\chi(X)} \cdot F_2^{\alpha}(q)$ is equal to
$$
-\frac23 \sum_{i,j,k > 0} 
   \frac{q^{i+j+k}}{(1-q^i)(1-q^j)(1-q^k)(1-q^{i+j+k})}  
$$
$$
+ \frac12 \, \sum_{i,j,k, \ell >0, i+j=k+\ell}   
   \frac{q^{i+j}}{(1-q^i)(1-q^j)(1-q^k)(1-q^{\ell})}  
$$
By \eqref{i+j=k+l.1} and \eqref{i+j=k+l.4}, 
the coefficient of $\langle K_X, \alpha \rangle$ in 
$(q;q)_\infty^{\chi(X)} \cdot F_2^{\alpha}(q)$ is equal to
\begin{equation}
2 \Theta_2(q) = -\frac1{6} Z(4) + \frac23 Z(2)^2.
\tag*{$\qed$}
\end{equation}

\begin{lemma}   \label{KX2alpha} 
The coefficient of $\langle K_X^2, \alpha \rangle$ in 
$(q;q)_\infty^{\chi(X)} \cdot F_2^{\alpha}(q)$ is equal to
\begin{eqnarray}   \label{lemma_ch2L.7}
\frac{13}{12} Z(4) - \frac13 Z(2)^2 - \frac14 Z(3) + \frac14 Z(2).
\end{eqnarray}
\end{lemma}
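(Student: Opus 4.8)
The plan is to extract the coefficient of $\langle K_X^2, \alpha \rangle$ directly from Proposition~\ref{prop4thterms} and then collapse the resulting $q$-series to the basic quantities $Z(2), Z(3), Z(4)$ using the identities assembled in Subsection~\ref{subsect_identities}. First I would observe that, $X$ being a surface, $K_X^3 = 0$ for dimension reasons, so $(1_X - K_X)^2 = 1_X - 2K_X + K_X^2$ and $(1_X - K_X)^3 = 1_X - 3K_X + 3K_X^2$. Consequently the $\langle 1_X - K_X, \alpha \rangle$-line of Proposition~\ref{prop4thterms} contributes nothing to $\langle K_X^2, \alpha \rangle$, whereas the $\langle (1_X - K_X)^3, \alpha \rangle$-line and the $\langle (1_X - K_X)^2, \alpha \rangle$-line contribute with factors $3$ and $-1$ respectively. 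Added to the two $\langle K_X^2, \alpha \rangle$-terms already displayed in Proposition~\ref{prop4thterms}, this exhibits the sought coefficient as an explicit sum of four $q$-series.

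Next I would rewrite the two generalized-partition sums by means of \eqref{1Xalpha.1} and \eqref{1Xalpha.2}, turning them into the unrestricted sums $\frac16 \sum_{i,j,k>0} \frac{q^{i+j+k}}{(1-q^i)(1-q^j)(1-q^k)(1-q^{i+j+k})}$ and $\frac14 \sum_{i+j=k+\ell} \frac{q^{i+j}}{(1-q^i)(1-q^j)(1-q^k)(1-q^\ell)}$, and then evaluate each of the four pieces separately: the sum $\sum_{n>0}\frac{(n-1)(2n-1)}{12}\frac{q^n}{(1-q^n)^2}$ via the expansion $(n-1)(2n-1) = 2n^2 - 3n + 1$ together with Lemma~\ref{lemma_n2qn} and the elementary identity $\sum_{n>0}\frac{nq^n}{(1-q^n)^2} = \Dq[1]$; the sum $\sum_{i,j>0}\frac{i+j-1}{4}\frac{q^{i+j}}{(1-q^i)(1-q^j)(1-q^{i+j})}$ via Lemma~\ref{ijkCO}; the triple sum via Lemma~\ref{lemma_ijk100}; and the $i+j=k+\ell$ sum via Lemma~\ref{i+j=k+l} followed once more by Lemma~\ref{lemma_ijk100}.

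The last step is the accounting. Each of the four pieces, taken in isolation, is expressed through the two auxiliary quantities $\Dq[1]$ and $\sum_{n>m>0}\frac{nq^n}{(1-q^n)^2(1-q^m)}$, neither of which is a priori a multiple $q$-zeta value; but once the four pieces are combined with the coefficients found above, both auxiliary quantities cancel identically, and what remains is exactly $\frac{13}{12}Z(4) - \frac13 Z(2)^2 - \frac14 Z(3) + \frac14 Z(2)$, which is \eqref{lemma_ch2L.7}. The main obstacle is therefore not conceptual but bookkeeping: one must track the rational coefficients faithfully through the several substitutions and check that the $\Dq[1]$-terms and the double-sum terms do indeed cancel — this cancellation being precisely what certifies that the coefficient of $\langle K_X^2, \alpha \rangle$ lies in $\qMZV$, with weight at most $4$.
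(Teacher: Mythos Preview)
Your proposal is correct and follows essentially the same route as the paper: extract the $\langle K_X^2,\alpha\rangle$-contributions from Proposition~\ref{prop4thterms} (using $K_X^3=0$ to read off the factors $3$ and $-1$ from the $(1_X-K_X)^3$ and $(1_X-K_X)^2$ lines), pass to unrestricted sums via \eqref{1Xalpha.1}--\eqref{1Xalpha.2}, and reduce each piece through Lemmas~\ref{lemma_n2qn}, \ref{ijkCO}, \ref{i+j=k+l}, \ref{lemma_ijk100}, observing that the auxiliary terms $\Dq[1]$ and $\sum_{n>m>0}\frac{nq^n}{(1-q^n)^2(1-q^m)}$ cancel. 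The only cosmetic difference is that the paper first combines the triple sum and the $i+j=k+\ell$ sum via Lemma~\ref{i+j=k+l} into a single $\tfrac16$-weighted triple sum before invoking Lemma~\ref{lemma_ijk100} once, whereas you invoke Lemma~\ref{lemma_ijk100} twice; the arithmetic is identical.
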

\begin{proof}
By Proposition~\ref{prop4thterms}, 
the coefficient of $\langle K_X^2, \alpha \rangle$ in 
$(q;q)_\infty^{\chi(X)} \cdot F_2^{\alpha}(q)$ equals
\begin{eqnarray*}   
& &\sum_{n > 0} \frac{(n-1)(2n-1)}{12} \frac{q^n}{(1-q^n)^2} 
   + \frac12 \sum_{i, j > 0} \frac{i+j-1}{2}
   \frac{q^{i+j}}{(1-q^i)(1-q^j)(1-q^{i+j})}   \\
& &+ 3 \sum_{\substack{\la = (-i-j-k, k, j, i)\\i \ge j \ge k \ge 1}}  
   \frac{1}{\la^!} \frac{q^{i+j+k}}{(1-q^i)(1-q^j)(1-q^k)(1-q^{i+j+k})} \\
& &- \sum_{\substack{\la = (-i, -j, \ell, k)\\i \ge j \ge 1, 
   k \ge \ell \ge 1, i+j=k+\ell}}   
   \frac{1}{\la^!} \frac{q^{i+j}}{(1-q^i)(1-q^j)(1-q^k)(1-q^{\ell})} \\
&=&\frac{1}{12}\sum_{n > 0} \frac{(n-1)(2n-1)q^n}{(1-q^n)^2} 
   + \frac{1}{4}\sum_{i, j > 0} 
   \frac{(i+j-1)q^{i+j}}{(1-q^i)(1-q^j)(1-q^{i+j})}  \\
& &+ \frac12 \sum_{i,j,k > 0} 
   \frac{q^{i+j+k}}{(1-q^i)(1-q^j)(1-q^k)(1-q^{i+j+k})} \\
& &- \frac14 \sum_{i,j,k, \ell >0, i+j=k+\ell}   
   \frac{q^{i+j}}{(1-q^i)(1-q^j)(1-q^k)(1-q^{\ell})}
\end{eqnarray*}
where we have applied \eqref{1Xalpha.1} and \eqref{1Xalpha.2}.
By Lemma~\ref{lemma_n2qn}, we have
\begin{eqnarray*}   
& &\frac{1}{12}\sum_{n > 0} \frac{(n-1)(2n-1)q^n}{(1-q^n)^2} \\
&=&\frac{1}{6} \left (\frac72 Z(4) - \frac12 Z(2)^2 + Z(2) \right ) 
   - \frac{1}{4} \sum_{n > 0} \frac{nq^n}{(1-q^n)^2} +\frac{1}{12} Z(2) \\
&=&\frac7{12} Z(4) - \frac1{12} Z(2)^2 + \frac14 Z(2) 
   - \frac{1}{4} q\frac{\rm d}{{\rm d}q}[1].
\end{eqnarray*}
By Lemma~\ref {ijkCO}, $\displaystyle{\frac{1}{4}\sum_{i, j > 0} 
   \frac{(i+j-1)q^{i+j}}{(1-q^i)(1-q^j)(1-q^{i+j})}}$ is equal to 
$$
\frac{1}{2}\sum_{n > m > 0} \frac{nq^n}{(1-q^n)^2(1-q^m)}
   + \frac{1}{2} q\frac{\rm d}{{\rm d}q}[1]
- \frac78 Z(4) + \frac18 Z(2)^2 - \frac14 Z(3)- \frac14 Z(2).
$$
Together with Lemma~\ref{i+j=k+l}, the coefficient of 
$\langle K_X^2, \alpha \rangle$ in 
$(q;q)_\infty^{\chi(X)} \cdot F_2^\alpha(q)$ equals
\begin{eqnarray*}    
& &-\frac{5}{24} Z(4) - \frac7{24} Z(2)^2 - \frac14 Z(3)
   + \frac{1}{2}\sum_{n > m > 0} \frac{nq^n}{(1-q^n)^2(1-q^m)}   \nonumber  \\
& &+ \frac{1}{4} q\frac{\rm d}{{\rm d}q}[1]
   + \frac16 \sum_{i,j,k > 0} 
   \frac{q^{i+j+k}}{(1-q^i)(1-q^j)(1-q^k)(1-q^{i+j+k})}   \\
&=&\frac{13}{12} Z(4) - \frac13 Z(2)^2 - \frac14 Z(3) + \frac14 Z(2)
\end{eqnarray*}
where we have used Lemma~\ref{lemma_ijk100} in the last step. 
\end{proof}

The following is the main result in this section. 

\begin{theorem} \label{lemma_ch2L}
Let $\alpha \in H^*(X)$ where $X$ is a smooth projective surface. Then, 
the reduced series $(q;q)_\infty^{\chi(X)} \cdot F_2^\alpha(q)$ 
is equal to 
$$ 
\left (-\frac{7}{24} Z(4) - \frac{23}{24} Z(2)^2 \right )
\langle e_X, \alpha \rangle
+ \left (\frac1{12} Z(4) - \frac13 Z(2)^2 \right ) 
\langle 1_X, \alpha \rangle
$$ 
$$
+ \left (-\frac1{6} Z(4) + \frac23 Z(2)^2 \right ) 
\langle K_X, \alpha \rangle
+ \left (\frac{13}{12} Z(4) - \frac13 Z(2)^2 - \frac14 Z(3) 
+ \frac14 Z(2) \right ) \langle K_X^2, \alpha \rangle.
$$
\end{theorem}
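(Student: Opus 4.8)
The plan is to obtain Theorem~\ref{lemma_ch2L} by assembling Proposition~\ref{prop4thterms} with the four coefficient computations carried out in Lemmas~\ref{eXalpha}, \ref{1Xalpha}, \ref{KXalpha} and \ref{KX2alpha}. Proposition~\ref{prop4thterms} already presents $(q;q)_\infty^{\chi(X)}\cdot F_2^\alpha(q)$ as an explicit $\Q[[q]]$-linear combination of the pairings $\langle e_X,\alpha\rangle$, $\langle K_X^2,\alpha\rangle$, $\langle 1_X-K_X,\alpha\rangle$, $\langle (1_X-K_X)^2,\alpha\rangle$ and $\langle (1_X-K_X)^3,\alpha\rangle$, so the remaining work is purely organizational: rewrite everything in terms of $\langle e_X,\alpha\rangle,\langle 1_X,\alpha\rangle,\langle K_X,\alpha\rangle,\langle K_X^2,\alpha\rangle$, and then substitute the closed forms for the resulting coefficients.

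First I would expand the binomials. Since $X$ is a smooth projective surface, $H^j(X)=0$ for $j>4$, so $K_X^3\in H^6(X)=0$ and hence $\langle K_X^3,\alpha\rangle=0$ for every $\alpha\in H^*(X)$. Therefore
\[
\langle (1_X-K_X)^2,\alpha\rangle = \langle 1_X,\alpha\rangle - 2\langle K_X,\alpha\rangle + \langle K_X^2,\alpha\rangle, \qquad \langle (1_X-K_X)^3,\alpha\rangle = \langle 1_X,\alpha\rangle - 3\langle K_X,\alpha\rangle + 3\langle K_X^2,\alpha\rangle,
\]
together with $\langle 1_X-K_X,\alpha\rangle=\langle 1_X,\alpha\rangle-\langle K_X,\alpha\rangle$. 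Substituting these into the formula of Proposition~\ref{prop4thterms} exhibits $(q;q)_\infty^{\chi(X)}\cdot F_2^\alpha(q)$ as $c_e\langle e_X,\alpha\rangle + c_1\langle 1_X,\alpha\rangle + c_K\langle K_X,\alpha\rangle + c_{K^2}\langle K_X^2,\alpha\rangle$, where $c_e, c_1, c_K, c_{K^2}\in\Q[[q]]$ are explicit sums of the $q$-series occurring in the various lines of that proposition.

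It then suffices to invoke the four lemmas: Lemma~\ref{eXalpha} gives $c_e=-\tfrac{7}{24}Z(4)-\tfrac{23}{24}Z(2)^2$, Lemma~\ref{1Xalpha} gives $c_1=\tfrac1{12}Z(4)-\tfrac13 Z(2)^2$, Lemma~\ref{KXalpha} gives $c_K=-\tfrac16 Z(4)+\tfrac23 Z(2)^2$, and Lemma~\ref{KX2alpha} gives $c_{K^2}=\tfrac{13}{12}Z(4)-\tfrac13 Z(2)^2-\tfrac14 Z(3)+\tfrac14 Z(2)$. Inserting these four evaluations yields precisely the claimed expression for $(q;q)_\infty^{\chi(X)}\cdot F_2^\alpha(q)$.

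The genuine difficulty is not in this final assembly but in the lemmas feeding it, and among those the decisive one is Lemma~\ref{KX2alpha}: its proof rests on Lemma~\ref{lemma_ijk100}, whose core is the repeated application of the partial-fraction identity \eqref{qiqj} to collapse the triple sum $\sum_{i,j,k>0} q^{i+j+k}/\big[(1-q^i)(1-q^j)(1-q^k)(1-q^{i+j+k})\big]$, together with the identities \eqref{Bra1Cor4.1}--\eqref{Bra1Cor4.2} and the quasi-modularity input of Lemma~\ref{i+j=k+l} (the evaluation of $\Theta_2(q)$). Within the present argument, the only point requiring care is the coherent extraction of $c_e, c_1, c_K, c_{K^2}$ from every line of Proposition~\ref{prop4thterms} after the three binomial expansions.
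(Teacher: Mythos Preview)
Your proposal is correct and mirrors the paper's own proof, which simply states that the theorem follows from Proposition~\ref{prop4thterms} together with Lemmas~\ref{eXalpha}--\ref{KX2alpha}. Your explicit binomial expansion of $\langle (1_X-K_X)^k,\alpha\rangle$ is precisely the organizational step implicit in how those four lemmas read off the coefficients from Proposition~\ref{prop4thterms}, so nothing additional is needed.
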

\begin{proof}
Follows from Proposition~\ref{prop4thterms} and the four 
Lemmas~\ref{eXalpha}-\ref{KX2alpha}.
\end{proof}

\section{\bf Application to Okounkov's Conjecture} 
\label{sect_Application}

Let $L$ be a line bundle on the smooth projective surface $X$. 
Recall from \eqref{L[n]} the tautological rank-$n$ bundle $\Ln$ 
over the Hilbert scheme $\Xn$.
By the Grothendieck-Riemann-Roch Theorem and \eqref{DefOfGGammaN}, we obtain
\begin{eqnarray}     \label{chLnGRR}
   \ch (\Ln) 
&=&p_{1*}(\ch({\mathcal O}_{{\mathcal Z}_n}) \cdot p_2^*\ch(L) \cdot p_2^*{\rm td}(X) )
               \nonumber    \\
&=&p_{1*}(\ch({\mathcal O}_{{\mathcal Z}_n}) \cdot p_2^*(1_X+L+L^2/2) \cdot p_2^*{\rm td}(X) )
               \nonumber    \\
&=&G(1_X, n) + G(L, n) + G(L^2/2, n).
\end{eqnarray}
Since the cohomology degree of $G_i(\alpha, n)$ is $2i+ |\alpha|$, we have
\begin{eqnarray}     \label{chLnGAlpha}
\ch_k (\Ln) = G_k(1_X, n) + G_{k-1}(L, n) + G_{k-2}(L^2/2, n).
\end{eqnarray} 

Following \cite{Oko}, we have defined the generating series 
$\big \langle \ch_{k_1}^{L_1} \cdots \ch_{k_N}^{L_N} \big \rangle$ 
and its reduced version $\big \langle \ch_{k_1}^{L_1} \cdots 
\ch_{k_N}^{L_N} \big \rangle'$
in \eqref{OkoChkN.1} and \eqref{OkoChkN.2} respectively. 

\begin{example}    \label{203001031038uuu}
Let $L$ be a line bundle over a surface $X$. 
By \eqref{OkoChkN.2}, \eqref{chLnGAlpha} and \eqref{F-generating}, 
$$
\big \langle \ch_{1}^L \big \rangle'
= (q; q)_\infty^{\chi(X)} \cdot \big \langle \ch_{1}^L \big \rangle
= (q; q)_\infty^{\chi(X)} \cdot \big (F_1^{1_X}(q) + F_0^L(q) \big )
$$
where by abusing notations, we have also used $L$ to denote 
its first Chern class. By \cite[Proposition~5.15]{Qin1}, 
for $\alpha \in H^*(X)$, we have
\begin{eqnarray}  \label{203001031038uuu.3}
(q; q)_\infty^{\chi(X)} \cdot F_0^\alpha(q) 
= \langle 1_X - K_X + e_X, \alpha \rangle \cdot Z(2).
\end{eqnarray}
By \cite[Proposition~5.17]{Qin1} (the assumption $e_X \alpha = 0$ 
there can be dropped), 
\begin{eqnarray}  \label{203001031038uuu.2}
& &(q; q)_\infty^{\chi(X)} \cdot F_1^{\alpha}(q)   \nonumber  \\
&=&\frac{\langle K_X - K_X^2, \alpha \rangle}{2} \cdot 
  \left (\sum_{m > 0} \frac{(m-1) q^m}{(1-q^m)^2}
  + \sum_{m_1, m_2 > 0} \frac{q^{m_1}}{1-q^{m_1}} \frac{q^{m_2}}{1-q^{m_2}} 
  \frac{1}{1-q^{m_1+m_2}} \right )   \nonumber  \\
&=&\frac{\langle K_X - K_X^2, \alpha \rangle}{2} \cdot 
  \big (-Z(2) + Z(3) \big ).
\end{eqnarray}
Therefore, we obtain
$(q; q)_\infty^{\chi(X)} \cdot F_0^L(q) = -Z(2) \cdot K_X L$, 
and $(q; q)_\infty^{\chi(X)} \cdot F_1^{1_X}(q)
= -{K_X^2}/{2} \cdot \big (-Z(2) + Z(3) \big )$. Hence we have
\begin{eqnarray}   \label{203001031038uuu.1}
\big \langle \ch_{1}^L \big \rangle'
= \frac{1}{2} \big (Z(2) - Z(3) \big ) \cdot K_X^2
  - Z(2) \cdot K_X L.
\end{eqnarray}
When $L = \mathcal O_X$, \eqref{203001031038uuu.1} in 
the equivariant setting is given by \cite[Corollary~3]{CO}.
\end{example}

Next, we study $\big \langle \ch_2^L \big \rangle'$. 
By \eqref{chLnGAlpha} and \eqref{F-generating}, 
$\big \langle \ch_2^L \big \rangle'$ is equal to
\begin{eqnarray}     \label{chLnGAlpha.1}
& &(q; q)_\infty^{\chi(X)} \cdot \big \langle \ch_2^L \big \rangle
      \nonumber   \\
&=&(q; q)_\infty^{\chi(X)} \cdot \sum_{n \ge 0} q^n \, 
      \int_\Xn \ch_2(L^{[n]}) \cdot c(T_\Xn)    \nonumber   \\
&=&(q; q)_\infty^{\chi(X)} \cdot F_2^{1_X}(q)
      + (q; q)_\infty^{\chi(X)} \cdot F_1^L(q)
      + (q; q)_\infty^{\chi(X)} \cdot F_0^{L^2/2}(q).
\end{eqnarray}
By \eqref{203001031038uuu.3} and \eqref{203001031038uuu.2}, we obtain
\begin{eqnarray}   \label{ch2L.0}
  \langle \ch_2^L \rangle'
= (q;q)_\infty^{\chi(X)} \cdot F_2^{1_X}(q) 
  + \frac12 (Z(3) - Z(2)) \cdot K_X L + \frac12 Z(2)\cdot L^2.
\end{eqnarray}

\begin{theorem} \label{theorem_ch2L}
Let $L$ be a line bundle over a smooth projective surface $X$, 
and $K_X$ be the canonical divisor of $X$. Then, 
the reduced series $\langle \ch_2^L \rangle'$ is equal to
$$ 
\left (-\frac{7}{24} Z(4) - \frac{23}{24} Z(2)^2 \right ) \chi(X)
+ \frac12 (Z(3) - Z(2)) \cdot K_X L 
$$
$$ 
+ \left (\frac{13}{12} Z(4) - \frac13 Z(2)^2 - \frac14 Z(3) 
+ \frac14 Z(2) \right ) K_X^2 + \frac12 Z(2) \cdot L^2.
$$
In particular, Conjecture~\ref{OkoConj} holds for the reduced series 
$\langle \ch_2^L \rangle'$.
\end{theorem}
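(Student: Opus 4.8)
The plan is to derive Theorem~\ref{theorem_ch2L} as a one-line specialization of Theorem~\ref{lemma_ch2L}, combined with the reduction \eqref{ch2L.0}. Recall that \eqref{ch2L.0} already gives $\langle \ch_2^L \rangle' = (q;q)_\infty^{\chi(X)} \cdot F_2^{1_X}(q) + \frac12(Z(3) - Z(2)) \cdot K_X L + \frac12 Z(2) \cdot L^2$, so the remaining task is to evaluate $(q;q)_\infty^{\chi(X)} \cdot F_2^{1_X}(q)$, i.e.\ to put $\alpha = 1_X$ in the formula of Theorem~\ref{lemma_ch2L}.

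First I would compute the four intersection pairings occurring in Theorem~\ref{lemma_ch2L} at $\alpha = 1_X$. Since $\langle \beta, 1_X \rangle = \int_X \beta$ merely extracts the top-degree component of $\beta$, we get $\langle e_X, 1_X \rangle = \int_X e_X = \chi(X)$ and $\langle K_X^2, 1_X \rangle = \int_X K_X^2 = K_X^2$, whereas $\langle 1_X, 1_X \rangle = 0$ and $\langle K_X, 1_X \rangle = 0$ for degree reasons (the integrands lie in $H^0(X)$ and $H^2(X)$, while $\int_X$ is supported on $H^4(X)$). Substituting these values into Theorem~\ref{lemma_ch2L} gives
$$
(q;q)_\infty^{\chi(X)} \cdot F_2^{1_X}(q) = \left(-\frac{7}{24} Z(4) - \frac{23}{24} Z(2)^2\right)\chi(X) + \left(\frac{13}{12} Z(4) - \frac13 Z(2)^2 - \frac14 Z(3) + \frac14 Z(2)\right) K_X^2,
$$
and feeding this back into \eqref{ch2L.0} reproduces the asserted closed form verbatim, since the $K_X L$ and $L^2$ terms are already in final form.

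For the last sentence, I would appeal to \eqref{BK3-2.4}: as $Z(2), Z(3), Z(4) \in \qMZV$ have weights $2, 3, 4$ and $\qMZV$ is a $\Q$-algebra, $Z(2)^2 \in \qMZV$ has weight $4$, so every term of the closed form is a multiple $q$-zeta value of weight at most $4 = \sum_{i=1}^1 (k_i + 2)$ with $k_1 = 2$ --- exactly Conjecture~\ref{OkoConj} for $\langle \ch_2^L \rangle'$. I do not expect any obstacle in this final step; all the real work lies upstream, in establishing Theorem~\ref{lemma_ch2L} via Lemma~\ref{FtoW}, the formula \eqref{Tang5.6} for $\fG_2(\alpha)$, and the $q$-series identities of Section~\ref{sect_F2Alpha}, most notably Lemmas~\ref{i+j=k+l} and \ref{lemma_ijk100}, which route the computation through the weight-$4$ quasi-modular form $\Theta_2(q)$.
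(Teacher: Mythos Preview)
Your proposal is correct and follows essentially the same approach as the paper: specialize Theorem~\ref{lemma_ch2L} at $\alpha = 1_X$ (noting that the $\langle 1_X, 1_X\rangle$ and $\langle K_X, 1_X\rangle$ terms vanish for degree reasons) and plug the result into \eqref{ch2L.0}. The paper's proof is essentially these two sentences, and your added justification for the ``In particular'' clause via \eqref{BK3-2.4} is also sound.
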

\begin{proof}
By Theorem~\ref{lemma_ch2L}, $(q;q)_\infty^{\chi(X)} \cdot F_2^{1_X}(q)$ 
is equal to 
$$ 
\left (-\frac{7}{24} Z(4) - \frac{23}{24} Z(2)^2 \right ) \chi(X)
+ \left (\frac{13}{12} Z(4) - \frac13 Z(2)^2 - \frac14 Z(3) 
+ \frac14 Z(2) \right ) K_X^2.
$$
Combining with \eqref{ch2L.0} completes the proof of the theorem.
\end{proof}

\begin{corollary} \label{corollary_ch2L}
Let $L$ be a line bundle over a smooth projective surface $X$. 
If the canonical divisor of $X$ is numerically trivial, 
then $\langle \ch_2^L \rangle'$ 
is a quasi-modular form of weight at most $4$. 
In particular, Conjecture~\ref{QinConj} holds for 
$\langle \ch_2^L \rangle'$.
\end{corollary}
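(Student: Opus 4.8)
The plan is to derive the statement directly from the closed formula for $\langle \ch_2^L \rangle'$ obtained in Theorem~\ref{theorem_ch2L}, using only the hypothesis on $K_X$ together with the ring structure of ${\bf QM}$. First I would recall that Theorem~\ref{theorem_ch2L} writes $\langle \ch_2^L \rangle'$ as an explicit $\Q$-linear combination of $Z(2)$, $Z(3)$, $Z(4)$ and $Z(2)^2$ whose coefficients are the intersection numbers $\chi(X)$, $K_X L$, $K_X^2$ and $L^2$ on $X$. The key structural feature to note is that $Z(3)$ enters this formula only through terms that are multiplied by $K_X L$ or by $K_X^2$.

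Next I would exploit the hypothesis. Intersection numbers on a smooth projective surface depend only on the numerical equivalence classes of the divisors involved, so numerical triviality of the canonical divisor already forces $K_X L = 0$ and $K_X^2 = 0$. Substituting these into the formula of Theorem~\ref{theorem_ch2L} annihilates every term involving $K_X$, and in particular removes every occurrence of $Z(3)$, leaving
$$
\langle \ch_2^L \rangle' = \left(-\frac{7}{24} Z(4) - \frac{23}{24} Z(2)^2\right) \chi(X) + \frac12 Z(2) \cdot L^2.
$$

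Finally I would invoke \eqref{20170812522pm}, which identifies the graded ring of quasi-modular forms of level $1$ over $\Q$ with ${\bf QM} = \Q[Z(2), Z(4), Z(6)]$, the generators $Z(2)$ and $Z(4)$ having weights $2$ and $4$. Since ${\bf QM}$ is a ring, $Z(2)^2 \in {\bf QM}$ and it is homogeneous of weight $4$; hence $-\frac{7}{24} Z(4) - \frac{23}{24} Z(2)^2$ is quasi-modular of weight $4$ and $\frac12 Z(2)$ is quasi-modular of weight $2$. Therefore the right-hand side above is a quasi-modular form of weight at most $4$, which is exactly the assertion of Conjecture~\ref{QinConj} for $\langle \ch_2^L \rangle'$.

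I do not expect any genuine obstacle at this point: all the analytic and combinatorial difficulty has already been discharged in establishing Theorem~\ref{theorem_ch2L}. The only two points needing a word of justification are the ones emphasized above — that numerical (rather than honest) triviality of $K_X$ suffices to kill $K_X L$ and $K_X^2$, which is precisely what eliminates the non-quasi-modular value $Z(3)$ from the formula, and that closure of ${\bf QM}$ under multiplication is what upgrades the a priori multiple $q$-zeta value $Z(2)^2$ to a bona fide quasi-modular form of weight $4$.
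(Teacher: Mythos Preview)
Your proposal is correct and follows essentially the same approach as the paper: apply Theorem~\ref{theorem_ch2L}, use numerical triviality of $K_X$ to kill the $K_X L$ and $K_X^2$ terms (thereby eliminating $Z(3)$), and then invoke ${\bf QM} = \Q[Z(2), Z(4), Z(6)]$ to conclude. The paper's own proof is slightly terser but identical in substance, with the only extra remark being that $\chi(X), L^2 \in \Z$ so the surviving coefficients lie in $\Q$.
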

\begin{proof}
Since $K_X$ is numerically trivial, 
we see from Theorem~\ref{theorem_ch2L} that
$$ 
  \langle \ch_2^L \rangle'
= \left (-\frac{7}{24} Z(4) - \frac{23}{24} Z(2)^2 \right ) \chi(X)
  + \frac12 Z(2) \cdot L^2.
$$
Note that $\chi(X), L^2 \in \Z$. 
Since the set of all quasi-modular forms is the algebra 
$\Q\big [Z(2), Z(4), Z(6) \big ]$, 
$\langle \ch_2^L \rangle'$ is a quasi-modular form of weight at most $4$.
\end{proof}

\end{document}